\newfont{\msam}{msam10}
\newtheorem{theorem}[]{Theorem}
\newtheorem{proposition}[]{Proposition}
\newtheorem{corollary}[]{Corollary}
\newtheorem{lemma}[]{Lemma}
\theoremstyle{definition}
\theoremstyle{Proposition}
\newtheorem{remark}[]{Remark}
\newtheorem{prop}[theorem]{Proposition}
\let\nc\newcommand
\def\bthm{\begin{theorem}}
\def\ethm{\end{theorem}}
\def\blemma{\begin{lemma}}
\def\elemma{\end{lemma}}
\def\bproof{\begin{proof}}
\def\eproof{\end{proof}}
\def\bprop{\begin{proposition}}
\def\eprop{\end{proposition}}
\def\bcor{\begin{corollary}}
\def\ecor{\end{corollary}}
\nc{\la}{\label}
\def\N{\mathbb{N}}
\def\c{\mathbb{C}}
\def\L{\boldsymbol{L}}
\def\Alg{\mathtt{Alg}}
\def\cAlg{\mathtt{Comm\,Alg}}
\def\Sets{\mathtt{Sets}}
\def\DGA{\mathtt{DGA}}
\def\cDGA{\mathtt{CDGA}}
\def\Ho{{\mathtt{Ho}}}
\newcommand{\bL}{\boldsymbol{\Lambda}}
\nc{\FT}{\mathcal{C}}
\nc{\Ob}{{\rm Ob}}
\nc{\Hom}{{\rm{Hom}}}
\nc{\HOM}{\underline{\rm{Hom}}}
\nc{\DER}{\underline{\rm{Der}}}
\nc{\END}{\underline{\rm{End}}}
\nc{\bSym}{\bold{\Lambda}}
\nc{\Ext}{{\rm{Ext}}}
\nc{\Rep}{{\rm{Rep}}}
\nc{\DRep}{{\rm{DRep}}}
\nc{\NCRep}{\widetilde{\rm{Rep}}}
\nc{\RAct}{{\rm{RAct}}}
\nc{\bs}{\backslash}
\nc{\cn}{{\tt{Cone}}}
\nc{\n}{\natural}
\nc{\B}{\Omega}
\nc{\Ba}{\overline{\mathrm{B}}}
\nc{\bC}{\overline{C}}
\nc{\EXT}{\underline{\rm{Ext}}}
\nc{\TOR}{\underline{\rm{Tor}}}
\def\H{\mathrm H}
\def\HC{\mathrm{HC}}
\def\rHC{\overline{\mathrm{HC}}}
\def\CC{\mathrm{CC}}
\nc{\End}{{\rm{End}}}
\nc{\GL}{{\rm{GL}}}
\nc{\gl}{{\mathfrak{gl}}}
\nc{\PGL}{{\rm{PGL}}}
\nc{\SL}{{\rm{SL}}}
\nc{\PSL}{{\rm{PSL}}}
\nc{\ad}{{\rm{ad}}}
\nc{\Ad}{{\rm{Ad}}}
\nc{\dlim}{\varinjlim}
\nc{\plim}{\varprojlim}
\newcommand{\HH}{{\rm{HH}}}
\newcommand{\Spec}{{\rm{Spec}}}
\newcommand{\id}{{\rm{Id}}}
\newcommand{\Tr}{{\rm{Tr}}}
\newcommand{\into}{\,\hookrightarrow\,}
\newcommand{\onto}{\,\twoheadrightarrow\,}
\newcommand{\sonto}{\,\stackrel{\sim}{\twoheadrightarrow}\,}
\def\der{\mathtt{Der}}
\newcommand{\rar}{\rightarrow}
\def\ldb{\mathopen{\{\!\!\{}}
\def\rdb{\mathclose{\}\!\!\}}}
\def\ldbg{\mathopen{\bigl\{\!\!\bigl\{}}
\def\rdbg{\mathclose{\bigr\}\!\!\bigr\}}}
\newcommand{\CH}{\mathrm{C}}
\begin{document}

\title{Noncommutative Poisson Structures, Derived Representation Schemes and Calabi-Yau Algebras}

\author{Yuri Berest}
\address{Department of Mathematics,
 Cornell University, Ithaca, NY 14853-4201, USA}
\email{berest@math.cornell.edu}
\author{Xiaojun Chen}
\address{School of Mathematics, Sichuan University, Chengdu 610064, China}
\email{xjchen@scu.eu.cn}
\author{Farkhod Eshmatov}
\address{Max Planck Institute for Mathematics, Vivatsgasse 7, 53111 Bonn, Germany}
\email{eshmatov@mpim-bonn.mpg.de}
\author{Ajay Ramadoss}
\address{Departement Mathematik,
Eidgenossische TH Z\"urich,
8092 Z\"urich, Switzerland}
\email{ajay.ramadoss@math.ethz.ch}

\maketitle

\section{Introduction}
Recall that a Poisson structure on a commutative algebra $A$ is a Lie bracket $\,\{\,\mbox{--}\,,\,\mbox{--}\,\}:\,A \times A \to A \,$ satisfying the Leibniz rule $\,\{a,bc\} = b\{a,c\} + \{a,b\}c \,$ for all $\,a,\,b,\,c \in A $. For noncommutative algebras, this definition is known to be too restrictive: if $A$ is a noncommutative domain (more generally, a prime ring), any Poisson bracket on $A$ is a multiple of the commutator $ [a,b] = ab-ba $ (see [FL], Theorem~1.2). Motivated by recent work on noncommutative geometry (see \cite{Ko, Gi, BL, CBEG, vdB}), Crawley-Boevey proposed in \cite{CB} a different notion of the Poisson structure on an algebra $A$ that
agrees with the above definition for commutative algebras and has surprisingly nice categorical properties.
The idea of \cite{CB} was to find the weakest structure on $A$ that induces natural Poisson structures on the moduli spaces of 
finite-dimensional semisimple representations of $A$. It turns out that such a weak Poisson structure is given by a Lie bracket on the 0-th cyclic homology $\, \HC_0(A) = A/[A,A] \,$ satisfying some extra conditions; it is thus called in \cite{CB} an {\it $ H_0$-Poisson structure}. The very terminology of \cite{CB} suggests that there might exist a `higher' homological generalization of this construction. The aim of this paper is to show that this is indeed the case: our main construction yields a graded (super) Lie algebra structure on the full cyclic homology of $A\,$:
\begin{equation*}
\{\,\mbox{--}\,,\,\mbox{--}\,\}:\ \HC_{\bullet}(A) \times \HC_{\bullet}(A) \to \HC_{\bullet}(A)
\end{equation*}
that satisfies certain properties and restricts to Crawley-Boevey's $\H_0$-Poisson structure on $ \HC_0(A) $. 
We call such structures the {\it derived Poisson structures} on $A$.

To explain our results in more detail we first recall the main theorem of \cite{CB}.
Let $A$ be an associative unital algebra and let $ V $ be a finite-dimensional vector space, both
defined over a field $k$ of characteristic zero. The classical representation scheme parametrizing the
$k$-linear representations of $A$ in $V$ can be defined as the functor on the category of commutative algebras
\begin{equation}
\la{rep}
\Rep_V(A):\ \cAlg_{k} \to \Sets\ ,\quad B \mapsto \Hom_{\Alg_k}(A,\, B \otimes_k \End\,V)\ .
\end{equation}
It is well known that \eqref{rep} is representable, and we denote the corresponding commutative algebra by
$\, k[\Rep_V(A)] \,$. The group $ \GL(V) $ acts naturally on the scheme $ \Rep_V(A) $, with orbits
corresponding to the isomorphism classes of representations.
The closed orbits correspond to the classes of semisimple representations and are parametrized
by the affine quotient scheme $\,\Rep_V(A)/\!/\GL(V) = \Spec\,k[\Rep_V(A)]^{\GL(V)} $  (see, e.g., \cite{K}).
Now, there is a natural trace map
\begin{equation}
\la{trr}
\Tr_V:\ \HC_0(A) \to k[\Rep_V(A)]^{\GL(V)}
\end{equation}
defined by taking the characters of representations. In terms of \eqref{trr}, we can state
the main result of \cite{CB} as follows.
\begin{theorem}[\cite{CB}, Theorem~1.6]
\la{CBT}
Given an $ H_0$-Poisson structure on $A$, for each $V $, there exists a unique Poisson structure on
$ \Rep_V(A)/\!/\GL(V) $ satisfying
$$
\{\Tr_V(a),\,\Tr_V(b)\} = \Tr_V(\{a,\,b\})\ ,\quad \forall\ a, b \in \HC_0(A)\ .
$$
\end{theorem}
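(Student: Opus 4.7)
The plan is to translate the statement into the construction of a Poisson bracket on the coordinate ring $R := k[\Rep_V(A)]^{\GL(V)}$. Uniqueness follows immediately from a classical theorem of Procesi: the invariant ring $R$ is generated, as a commutative algebra, by the trace functions $\Tr_V(\bar a)$, $\bar a \in \HC_0(A)$. Hence a Poisson bracket satisfying $\{\Tr_V(\bar a),\Tr_V(\bar b)\} = \Tr_V(\{\bar a,\bar b\})$ on these generators is forced, by the Leibniz rule, to be unique on all of $R$.

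For existence, I first observe that any derivation $d:A\to A$ induces a $\GL(V)$-equivariant derivation $d_\ast$ of $k[\Rep_V(A)]$ via $a_{ij}\mapsto (d(a))_{ij}$ on matrix-coordinate generators, extended by the Leibniz rule. Inner derivations $d=[c,-]$ produce vector fields tangent to the $\GL(V)$-orbits, so they annihilate $R$. Now, by the defining property of an $H_0$-Poisson structure, the operator $\{\bar a,-\}:\HC_0(A)\to\HC_0(A)$ lifts, for each $\bar a$, to a derivation $d_a:A\to A$ unique modulo $\Inn(A)$. The restriction $\xi_{\bar a} := (d_a)_\ast|_R$ is therefore independent of the choice of lift, and satisfies $\xi_{\bar a}(\Tr_V(\bar b)) = \Tr_V(\overline{d_a(b)}) = \Tr_V(\{\bar a,\bar b\})$. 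I propose to define the desired bracket on generators by $\{\Tr_V(\bar a), f\} := \xi_{\bar a}(f)$ and extend it to all of $R$ by the biderivation property.

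The principal obstacle is to verify that this prescription yields a well-defined antisymmetric biderivation on $R$ obeying the Jacobi identity. Well-definedness reduces to showing that the proposed bracket kills every polynomial relation holding among the generators $\Tr_V(\bar a)$; but for each $\bar b$, the operator $f\mapsto\{f,\Tr_V(\bar b)\}$ agrees on generators with $-\xi_{\bar b}$, which is an honest derivation of $R$ and hence annihilates all such relations. Antisymmetry on generators reads $\xi_{\bar a}(\Tr_V(\bar b)) = -\xi_{\bar b}(\Tr_V(\bar a))$, which is the antisymmetry of the bracket on $\HC_0(A)$; the biderivation property then propagates antisymmetry to all of $R$. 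Finally, the Jacobiator of $\{-,-\}$ is itself a triderivation in its three arguments, so it suffices to check the Jacobi identity on triples of trace generators; there it expands to
$$
\Tr_V\bigl(\{\bar a,\{\bar b,\bar c\}\} + \{\bar b,\{\bar c,\bar a\}\} + \{\bar c,\{\bar a,\bar b\}\}\bigr) = 0,
$$
which is exactly the Jacobi identity for $\{-,-\}$ on $\HC_0(A)$.
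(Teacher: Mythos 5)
Your proposal is essentially the standard proof of Crawley-Boevey's theorem, and it matches the strategy of this paper: the paper does not reprove Theorem~\ref{CBT} (it is quoted from \cite{CB}), but its generalization (Theorem~\ref{NCPoiss}, via Lemma~\ref{pG.1.4.1} and Theorem~\ref{t3s1}) runs along exactly the lines you describe --- uniqueness from Procesi-type generation of the invariant ring by traces, existence by lifting $\{\bar a,\mbox{--}\}$ to a derivation of $A$, pushing it to an equivariant vector field on $\Rep_V(A)$, and checking well-definedness by observing that both sides of every identity are derivations with respect to the trace map, so that it suffices to verify everything on trace generators.

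One claim needs repair: the lift $d_a$ of $\{\bar a,\mbox{--}\}$ is \emph{not} unique modulo $\Inn(A)$. The definition of an $H_0$-Poisson structure only forces two lifts to differ by a derivation whose image lies in $[A,A]$ (the ideal denoted $\DER(A)^{\n}$ in this paper), which is in general strictly larger than the space of inner derivations. This does not sink the argument: if $(d_a-d_a')(A)\subseteq [A,A]$, then $(d_a-d_a')_\ast$ annihilates every $\Tr_V(\bar b)$ (traces vanish on commutators) and, being a derivation preserving $R$, annihilates all of $R$; so $\xi_{\bar a}$ is still independent of the lift. With that one-line substitution, the remainder --- well-definedness via the derivation property in each slot, antisymmetry and Jacobi checked on trace generators --- is correct and is precisely how the paper argues in the DG setting.
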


Our generalization of Theorem~\ref{CBT} is based on results of the recent paper \cite{BKR},
where the character map \eqref{trr} is extended to higher cyclic homology. We briefly
review these results referring the reader to \cite{BKR} (and Section~\ref{S2} below) for details.
Varying $A$ (while keeping $V$ fixed) one can regard the representation functor \eqref{rep}
as a functor on the category $\Alg_k$ of algebras. This functor can then extended to the
category $ \DGA_k $ of differential graded (DG) algebras, and the scheme $ \Rep_V(A) $ can be
derived by replacing $A$ by its almost free DG resolution in $ \DGA_k $. The fact that the result is
independent of the choice of resolution was first proved in \cite{CK}. In~\cite{BKR}, we gave
a more conceptual proof, using Quillen's theory of model categories \cite{Q1}, and found a simple
algebraic construction for the total derived functor of $ \Rep_V $. When applied to
$ A $, this derived functor is represented (in the homotopy category of DG algebras) by a commutative
DG algebra $ \DRep_V(A) $. The homology of  $ \DRep_V(A) $ depends only on $\,A\,$ and $\,V\,$,
with $ \H_0[\DRep_V(A)] $ being isomorphic to $ k[\Rep_V(A)] $. Following \cite{BKR}, we will
refer to $ \H_\bullet[\DRep_V(A)] $ as the {\it representation homology} of $A$ and denote it by
$ \H_\bullet(A, V) $. The action of $ \GL(V)$ on $ \Rep_V(A) $ naturally extends to $ \DRep_V(A) $,
and there is an isomorphism $\, \H_\bullet[\DRep_V(A)^{\GL(V)}] \cong \H_\bullet(A, V)^{\GL(V)} $.
Now, one of the key results of \cite{BKR} is a construction of the canonical trace
maps
\begin{equation}
\la{trr1}
(\Tr_V)_n:\, \HC_n(A) \to \H_n(A,V)^{\GL(V)}\ ,\quad \forall\,n\ge 0\ ,
\end{equation}
extending \eqref{trr} to the higher cyclic homology\footnote{We will review this
construction in Section~\ref{S2} below.}. In terms of \eqref{trr1},
the main result of the present paper can be stated as a direct generalization of Theorem~\ref{CBT}.
\begin{theorem}
\la{t3s2int}
Given a derived Poisson structure on $A$, for each $ V $, there is a unique graded Poisson
bracket on the graded commutative algebra $ \H_\bullet(A, V)^{\GL(V)} $ such that
$$
\{(\Tr_V)_\bullet(\alpha),\,(\Tr_V)_\bullet(\beta)\} =
 (\Tr_V)_{\bullet}(\{\alpha,\,\beta\})\ ,\quad \forall\ \alpha, \beta \in \HC_\bullet(A)\ .
$$
\end{theorem}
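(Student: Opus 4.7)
My strategy is to mirror Crawley-Boevey's proof of Theorem~\ref{CBT}, but carried out at the differential graded level. I pick an almost free (cofibrant) DG resolution $\,R \sonto A\,$ in $\,\DGA_k\,$, so that by the main construction of \cite{BKR} the DG commutative algebra $\,k[\Rep_V(R)]\,$ represents $\,\DRep_V(A)\,$, and $\,\HC_\bullet(A)\,$ is computed by the (reduced) cyclic complex of $\,R\,$. A derived Poisson structure on $A$ will come equipped with a chain-level avatar: for every homogeneous cyclic cycle $\,\bar\alpha$, the adjoint endomorphism $\,\{\bar\alpha,\,\mbox{--}\,\}\,$ of the cyclic complex should be induced by a graded derivation $\,D_\alpha \in \Der(R)\,$, exactly as in \cite{CB} when $n=0$. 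I take the precise definition of ``derived Poisson structure'' (to be given later in the paper) to include this lifting property, possibly up to coherent higher homotopies.

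\textbf{Construction.} Given such a lift $D_\alpha$, functoriality of $\Rep_V$ on derivations produces a graded derivation $\widetilde{D}_\alpha$ of $\,k[\Rep_V(R)]\,$ of the same homological degree; since the $\GL(V)$-action on $\,k[\Rep_V(R)]\,$ comes from the universal construction, $\widetilde{D}_\alpha$ is equivariant and restricts to a derivation $\,\bar D_\alpha\,$ on $\,k[\Rep_V(R)]^{\GL(V)}\,$. I then define the desired Poisson bracket on $\,\H_\bullet(A, V)^{\GL(V)}\,$ by
\[
\{(\Tr_V)_\bullet(\alpha),\,[f]\} \,:=\, [\bar D_\alpha(f)]\ ,
\]
and extend by graded antisymmetry and the Leibniz rule. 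This extension is forced, and hence unique, by a derived version of Procesi's classical theorem (to be invoked or proved in Section~\ref{S2}), which asserts that the graded commutative algebra $\,\H_\bullet(A, V)^{\GL(V)}\,$ is generated, up to homotopy, by the images of the higher trace maps $(\Tr_V)_\bullet$.

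\textbf{Verification and main obstacle.} The routine verifications are three: (a) $\,\bar D_\alpha\,$ is independent of the choice of lift $\,D_\alpha\,$ modulo exact derivations; (b) $\,[\bar D_\alpha(f)]\,$ depends only on the class of $\,\alpha\,$ in $\,\HC_\bullet(A)\,$ and of $f$ in $\,\H_\bullet(A, V)^{\GL(V)}\,$; (c) the whole construction is independent of the resolution $R$, which falls out of the model-categorical framework of \cite{BKR}. Graded antisymmetry and Jacobi on trace generators are inherited from $\,\HC_\bullet(A)\,$, and Leibniz holds by construction. The principal technical difficulty is that a derived Poisson structure is genuinely higher-homotopical: the lift $D_\alpha$ is only determined up to a coherent tower of higher homotopies, and one must check that all such ambiguity projects to zero in representation homology after passage to $\GL(V)$-invariants. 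The key will be to track these homotopies through the functor $\Rep_V$ and the trace map using the compatibility of model structures on $\,\DGA_k\,$ and $\,\cdga_k\,$ established in \cite{BKR}, showing that the Hamiltonian derivations $\,\widetilde{D}_\alpha\,$ are well defined modulo chain homotopies whose images in $\,k[\Rep_V(R)]^{\GL(V)}\,$ are exact.
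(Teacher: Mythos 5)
Your proposal is correct and follows essentially the same route as the paper: lift the adjoint action $\{\alpha,\mbox{--}\}$ to a derivation $\partial_\alpha\in\DER(R)$ on a cofibrant resolution, push it through the representation functor to a $\GL(V)$-equivariant derivation of $R_V^{\GL}$ (Lemma~\ref{pG.2.0}/Lemma~\ref{pG.1.4.1}), define the bracket on trace generators via this Hamiltonian derivation, and obtain well-definedness and uniqueness from the DG Procesi surjectivity (Theorem~\ref{t3s1}). The only difference in emphasis is that the paper's notion of derived Poisson structure is a \emph{strict} NC Poisson structure on the resolution $R$ (the coherent-higher-homotopy version you flag as the main difficulty is deferred to the separate $P_\infty$ treatment of Section~\ref{G.2}), and the paper establishes the Poisson structure at the chain level on $R_V^{\GL}$ before passing to homology rather than working directly on homology classes.
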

In fact, we will prove a more refined result (Theorem~\ref{NCPoiss}), of which Theorem~\ref{t3s2int}
is an easy consequence. Our key observation is that, when extended properly to the category of DG
algebras, the weak Poisson structures behave well with respect to homotopy (in the sense that the
homotopy equivalent Poisson structures on $A$ induce, via the derived representation functor, homotopy equivalent
DG Poisson algebra structures on $\DRep_V(A) $). Working in the homotopy-theoretic framework allows us to give a
precise meaning to the claim that the derived Poisson structures are indeed the {\it weakest} structures on $A$
inducing the usual (graded) Poisson structures under the representation functor (see Remark~\ref{R11}).

The paper is organized as follows. In Section~\ref{S2}, we review basic definitions and results of
\cite{BKR} and \cite{BR} needed for the present paper. In Section~\ref{G}, we extend Crawley-Boevey's
definition of a NC Poisson structure to the category of DG algebras and introduce a relevant notion of homotopy
for such structures. We also prove our first main result (Theorem~\ref{NCPoiss}) in this section.
In Section~\ref{G.2}, we then propose the definition of a noncommutative $P_{\infty}$-algebra
extending the results of Section~\ref{G} to strong homotopy algebras. We show that a noncommutative $P_{\infty}$-algebra structure on $A$ induces a $P_{\infty}$-structure on $\DRep_V(A)$ and that the homotopy equivalent noncommutative $P_{\infty}$-algebra structures induce homotopy equivalent $P_{\infty}$-structures on $\DRep_V(A)$. This result is part of
Theorem~\ref{NCGinfRep}, which is the second main result of this paper. The proof of Theorem~\ref{NCGinfRep} is parallel
to the proof of Theorem~\ref{NCPoiss}, however the calculations are technically more complicated.
Finally, Section~\ref{s3} provides an interesting class of examples of derived Poisson structures. These examples arise from $n$-cyclic coalgebras (through Van den Bergh's double bracket construction) and include, in particular,
linear duals of finite-dimensional $n$-cyclic algebras. The main result of Section~\ref{s3} -- Theorem~\ref{t1s3} --
shows that there is a natural double Poisson algebra structure on the cobar construction of any cyclic
coassociative DG coalgebra. The finite-dimensional $n$-cyclic algebras are known to be a special case of $n$-Calabi-Yau categories in the sense of~\cite{KS, Cos}; our results imply that these algebras carry noncommutative $(2-n)$-Poisson structures.
We conclude with a few remarks on string topology; these remarks clarify the relation of the present paper to the recent work of two of the current authors (X.~Ch. and F.~E.) with W.~L.~Gan (see \cite{CEG}).

\subsection*{Acknowledgements}{\footnotesize
We thank Travis Schedler for useful correspondence, in particular
for explaining to us the proof of Proposition~\ref{pG.2.2.2}.
The results of this paper were announced at the conference on Mathematical Aspects of Quantization held at Notre Dame University
in June 2011. The first author (Yu.~B.) would like to thank the organizers, in particular Michael Gekhtman and Sam Evens, for inviting him to this conference and giving an opportunity to speak. 
The work of Yu.~B. was partially supported by NSF grant DMS 09-01570;
the work of A.~R. was supported by the Swiss National Science Foundation (Ambizione Beitrag Nr. PZ00P2-127427/1).}

\section{The Derived Representation Functor and Higher Trace Maps}
\la{S2}
In this section, we review basic definitions and results of \cite{BKR} and \cite{BR}. Our purpose is to give a short and 
readable survey which goes slightly beyond the preliminaries for the present paper.
This survey evolved from notes of the talk given by the first author at the Notre Dame conference on quantization.

\subsection{Representation functors}
Throughout, $k$ denotes a base field of characteristic zero.
Let $\DGA_k$ be the category of associative DG algebras over $k$ equipped with differential
of degree $-1$, and let $\cDGA_k$ be its full subcategory consisting of commutative DG algebras.
The inclusion functor $\, \cDGA_k \into \DGA_k \,$ has an obvious left adjoint which assigns 
to a DG algebra $ A $ its abelianization; we denote it by
\begin{equation}\la{ab}
(\,\mbox{--}\,)_{\n\n} :\ \DGA_k \rar \cDGA_k \ ,\quad A \mapsto A/ \langle [A,A]\rangle \ .
\end{equation}
Now, given a finite-dimensional $k$-vector space $ V $, we introduce the following functor
\begin{equation}\la{root}
\sqrt[V]{\,\mbox{--}\,}\,:\,\DGA_k \rar \DGA_k\ ,\quad
A \mapsto  (A \ast_k \End\, V)^{\End\, V}\ .
\end{equation}
Here $\,A \ast_k \End\,V \,$ denotes the free product of $A$ with the endomorphism algebra of $V$
and $\,(\,\mbox{--}\,)^{\End\,V} $ stands for the centralizer of $\End\,V $ as the subalgebra
in that free product. Combining \eqref{ab} and \eqref{root}, we define
\begin{equation}
\la{rootab}
(\,\mbox{--}\,)_V :\ \DGA_k \rar \cDGA_k\ ,\quad A \mapsto A_V := (\!\sqrt[V]{A})_{\n\n}\ .
\end{equation}

\begin{theorem}[\cite{BKR}, Theorem~2.2]
\la{t1s1}
For any $\,A \in \DGA_k\,$, the DG algebra $\,A_V\,$ represents the functor
$$
\Rep_V(A):\ \cDGA_k \to \Sets\ ,\quad B \mapsto \Hom_{\DGA_k}(A,\, B \otimes_k \End\,V)\ .
$$
\end{theorem}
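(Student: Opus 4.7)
The plan is to produce, for every $B \in \cDGA_k$, a natural bijection
\begin{equation*}
\Hom_{\cDGA_k}(A_V,\, B) \;\cong\; \Hom_{\DGA_k}(A,\, B \otimes_k \End V)\,,
\end{equation*}
by factoring the functor $(\,\mbox{--}\,)_V$ as $(\,\mbox{--}\,)_{\n\n} \circ \sqrt[V]{\,\mbox{--}\,}$ and treating the two stages separately. The second stage is routine: since $B$ is commutative, any DG algebra map $\sqrt[V]{A} \to B$ vanishes on commutators and so factors uniquely through the abelianization $A_V = (\sqrt[V]{A})_{\n\n}$ by the universal property of \eqref{ab}. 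Thus the real content is establishing, for every $R \in \DGA_k$, a natural bijection
\begin{equation*}
\Hom_{\DGA_k}(\sqrt[V]{A},\, R) \;\cong\; \Hom_{\DGA_k}(A,\, R \otimes_k \End V)\,.
\end{equation*}

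For this I would use the Morita-type multiplication map
\begin{equation*}
\mu\colon\ \sqrt[V]{A} \otimes_k \End V \;\longrightarrow\; A \ast_k \End V\,,\qquad x \otimes e \;\longmapsto\; x \cdot e\,,
\end{equation*}
which is a homomorphism of DG algebras because the elements of $\sqrt[V]{A} = (A \ast_k \End V)^{\End V}$ commute with $\End V$ inside the free product. Granting that $\mu$ is an isomorphism, the bijection is obtained as follows. Given $g\colon \sqrt[V]{A} \to R$, compose $(g \otimes \id) \circ \mu^{-1}\colon A \ast_k \End V \to R \otimes_k \End V$ with the canonical inclusion $A \hookrightarrow A \ast_k \End V$. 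Conversely, given $f\colon A \to R \otimes_k \End V$, extend to $\tilde f\colon A \ast_k \End V \to R \otimes_k \End V$ by sending $\End V$ identically onto $1 \otimes \End V$ via the universal property of the free product. The image $\tilde f(\sqrt[V]{A})$ must then centralize $1 \otimes \End V$; since
\begin{equation*}
Z_{R \otimes_k \End V}(1 \otimes \End V) \;=\; R \otimes_k Z(\End V) \;=\; R\,,
\end{equation*}
restriction yields a map $\sqrt[V]{A} \to R$, and the two constructions are mutually inverse by inspection.

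The hard part is verifying that $\mu$ is an isomorphism, which is ultimately a Morita-theoretic fact reflecting that $\End V$ is central simple. I would prove it by exhibiting the inverse explicitly. Fixing a basis $\{e_{ij}\}_{1 \le i,j \le n}$ of matrix units in $\End V$, define
\begin{equation*}
\nu\colon\ A \ast_k \End V \;\longrightarrow\; \sqrt[V]{A} \otimes_k \End V\,,\qquad y \;\longmapsto\; \sum_{i,j} \Bigl( \sum_k e_{ki}\, y\, e_{jk} \Bigr) \otimes e_{ij}\,.
\end{equation*}
A direct calculation using $e_{ab} e_{cd} = \delta_{bc} e_{ad}$ and $\sum_i e_{ii} = 1$ shows that each coefficient $\sum_k e_{ki} y e_{jk}$ commutes with every matrix unit and hence lies in $\sqrt[V]{A}$, and that $\mu \circ \nu = \id$ and $\nu \circ \mu = \id$. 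Since $\mu$ and $\nu$ are given by formulas that respect both the grading and the differential of $A$, the argument passes through to the DG setting without change. Composing the two bijections yields the stated representability.
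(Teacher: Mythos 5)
Your argument is correct and is essentially the standard proof given in [BKR] for this statement (the paper itself only cites [BKR], Theorem~2.2): one reduces to the adjunction between $\rtv{\,\mbox{--}\,}$ and $\,\mbox{--}\,\otimes_k\End V$ via the classical isomorphism $S \cong S^{\End V}\otimes_k \End V$ for any DG algebra $S$ unitally containing $\End V$, with exactly the matrix-unit formulas you write, and then passes through abelianization for commutative targets. No gaps.
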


\noindent
Theorem~\ref{t1s1} implies that there is a bijection
\begin{equation}
\la{S2E12}
\Hom_{\cDGA_k}(A_V,\,B) = \Hom_{\DGA_k}(A,\, B \otimes_k \End\,V)\ ,
\end{equation}
functorial in $ A \in \DGA_k $ and $B \in \cDGA_k $. Informally, it suggests that
$ A_V = k[\Rep_V(A)] $ should be thought of as a DG algebra of functions on the affine DG scheme parametrizing
the representations of $A$ in $V$. Letting $\,B = A_V\,$ in \eqref{S2E12}, we get a canonical DG algebra
homomorphism
\begin{equation}
\la{e1s1}
\pi_V\,:\, A \to A_V \otimes \End\,V \ ,
\end{equation}
which is the universal representation of $A$. Furthermore, for  $ g \in \GL_k(V) $, we have
a unique automorphism of $\,A_V\,$ corresponding under the adjunction \eqref{S2E12} to the composite map
$$
A \xrightarrow{\pi_V} A_V \otimes \End\,V \xrightarrow{\text{Ad}(g) \otimes \id} A_V \otimes \End\,V  \ .
$$
This defines an action of $\GL_k(V)$ on $A_V$ by DG algebra automorphisms,
that is functorial in $A$. Thus, we can introduce the $\GL_k(V)$-invariant subfunctor of \eqref{rootab}:
\begin{equation}
\la{vgl}
(\,\mbox{--}\,)_V^{\GL} :\ \DGA_k \to \cDGA_k\ ,\quad
A \mapsto A_V^{\GL_k(V)}\ .
\end{equation}

\subsubsection{} The categories $\DGA_k$ and $\cDGA_k$ carry natural closed model structures (in the sense of
Quillen \cite{Q1}). The weak equivalences in these model categories are the quasi-isomorhisms and the
fibrations are the degreewise surjective maps. The cofibrations are characterized in abstract terms:
as the morphisms satisfying the left lifting property with respect to the acyclic fibrations (see, e.g., \cite{H}).
Every DG algebra $\,A \in \DGA_k \,$ has a cofibrant
resolution which is given by a surjective quasi-isomorphism $\, QA \sonto A \,$, with $ QA $ being a cofibrant object in $ \DGA_k $.
In particular, if $A $ is concentrated in non-negative degrees (for example, an ordinary algebra $ A \in \Alg_k $), any almost
free resolution $\,R \sonto A \,$ is cofibrant in $\DGA_k $.
Replacing DG algebras by their cofibrant resolutions one defines the homotopy category
$ \Ho(\DGA_k) $, in which the morphisms are given by the homotopy classes of morphisms between
cofibrant objects in $ \DGA_k $. The category $ \Ho(\DGA_k) $ is equivalent to the (abstract) localization of
the category $ \DGA_k $ at the class of weak equivalences. The corresponding localization functor
$\,\DGA_k \to \Ho(\DGA_k)\,$ acts as the identity on objects while mapping each morphism
$ f: A \to B $ in $ \DGA_k $ to the homotopy class of its cofibrant lifting 
$ Qf: QA \to QB $ (see, e.g., \cite{DS}).

We can now state one of the main results of \cite{BKR} which combines (part of) Theorem~2.2 and
Theorem~2.6 of~{\it loc. cit.}
\begin{theorem}[\cite{BKR}]
\la{t2s1}
$(a)$ The functor \eqref{rootab} has a total left derived functor
$$
\L(\,\mbox{--}\,)_V :\ \Ho(\DGA_k) \to \Ho(\cDGA_k)\ , \quad A \mapsto (QA)_V \,,\ f \mapsto (Qf)_V \ ,
$$
which is adjoint to the derived functor $\,\End\,V\,\otimes\,\mbox{--}\,:\, \Ho(\cDGA_k) \to \Ho(\DGA_k)\,$.

$(b)$  The functor \eqref{vgl} has a total left derived functor
$$
\L(\,\mbox{--}\,)_V^{\GL} :\ \Ho(\DGA_k) \to \Ho(\cDGA_k)\ , \quad A \mapsto (QA)_V^{\GL} \,,\ f \mapsto (Qf)_V^{\GL} \ .
$$
Here $QA$ is any cofibrant replacement of $A$ and $Qf$ is the corresponding cofibrant lifting of $f$.
\end{theorem}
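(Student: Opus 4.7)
The plan is to realize both statements as consequences of a Quillen adjunction. By Theorem~\ref{t1s1}, extended from $\Alg_k$ to the DG setting in the evident way, the functor $(-)_V$ has a right adjoint $\End V \otimes_k - \,:\, \cDGA_k \to \DGA_k$. I would first verify that this pair is a Quillen adjunction with respect to the model structures in which fibrations are the degreewise surjections and weak equivalences are the quasi-isomorphisms. Both conditions on the right adjoint are transparent: tensoring with the finite-dimensional $k$-algebra $\End V$ preserves surjections, and being exact (since $k$ is a field) it preserves quasi-isomorphisms; hence $\End V \otimes -$ preserves fibrations and acyclic fibrations.

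Once this is established, part~(a) follows from standard model-categorical machinery. By Ken Brown's lemma, $(-)_V$ preserves weak equivalences between cofibrant objects, so it admits a total left derived functor computed by $A \mapsto (QA)_V$ and $f \mapsto [(Qf)_V]$ for any cofibrant replacement $QA \sonto A$ and cofibrant lifting $Qf$ of $f$. The derived adjunction theorem (cf.~\cite{DS}) then identifies $\L(-)_V$ as a left adjoint to the right derived functor of $\End V \otimes -$, which coincides with $\End V \otimes -$ itself since the latter already preserves all weak equivalences.

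For part~(b), I would exploit that $k$ has characteristic zero and $\GL_k(V)$ is linearly reductive, so the functor of rational $\GL(V)$-invariants is exact on $\GL(V)$-representations. Since the action of $\GL(V)$ on $A_V$ is natural in $A$, any cofibrant lifting $Qf$ gives a $\GL(V)$-equivariant morphism $(Qf)_V$; exactness of invariants yields a natural identification $\H_\bullet[(QA)_V^{\GL}] \cong \H_\bullet[(QA)_V]^{\GL}$, so weak equivalences among cofibrant DG algebras descend to weak equivalences on invariants. An analogous argument, using that cylinder and path objects in $\cDGA_k$ can be chosen $\GL(V)$-equivariantly, shows that homotopies descend as well; hence $A \mapsto (QA)_V^{\GL}$ is well-defined on $\Ho(\DGA_k)$ and supplies the desired total left derived functor. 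The subtlest point, and the one I would expect to require the most care, is precisely this step in~(b): unlike in~(a), there is no obvious Quillen adjunction on the nose for $(-)_V^{\GL}$, so reductivity of $\GL(V)$ must substitute for abstract Quillen theory in controlling the behavior of the invariants functor on the homotopy data.
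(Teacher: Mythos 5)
The paper itself gives no proof of this theorem --- it is quoted verbatim from \cite{BKR}, where part $(a)$ is established exactly as you propose (the adjunction of Theorem~\ref{t1s1} is a Quillen adjunction because $\End V\otimes\,\mbox{--}\,$ preserves degreewise surjections and quasi-isomorphisms, and Ken Brown's lemma plus the derived adjunction theorem do the rest), while part $(b)$ rests on the same exactness of $\GL(V)$-invariants in characteristic zero that underlies Proposition~\ref{p2s1}. Your argument is correct and follows the cited source's route; the only point left tacit is that exactness of invariants requires the $\GL(V)$-action on $A_V$ to be rational (locally finite), which holds here because it is induced by the algebraic action on the universal representation \eqref{e1s1}.
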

The point of Theorem~\ref{t2s1} is that the DG algebras $ (QA)_V $ and  $ (QA)_V^{\GL} $
depend only on $A$ and $V$, provided we view them as objects in the homotopy category $ \Ho(\cDGA_k)$.
In particular, for $ A \in \Alg_k$, we set $ \DRep_V(A)\,:=\, \L(A)_V \,$ and
define\footnote{Sometimes, we will abuse this notation letting $\H_{\bullet}(A,V)$
denote $ \H_{\bullet}[\L(A)_V]$ for any DG algebra $ A \in \DGA_k$.}
$ \H_{\bullet}(A,V)\,:=\, \H_{\bullet}[\DRep_V(A)] $. This last object is a graded commutative
algebra which we call the {\it representation homology} of $A$.
Using the standard adjunction \eqref{S2E12}, it is not difficult to show
that $\,\H_0[\DRep_V(A)] \cong k[\Rep_V(A)]\,$ whenever $A$ is an ordinary algebra (see \cite{BKR}, 2.3.4).
In addition, we have the following property which shows that
homology commutes with taking invariants.
\begin{prop}[\cite{BKR}, Theorem 2.6]
\la{p2s1}
For any $ A\in \DGA_k $, there is a natural isomorphism
of graded commutative algebras
$$
\H_{\bullet}[\L(A)_V^{\GL}] \,\cong\, \H_{\bullet}(A,V)^{\GL_k(V)} \ .
$$
\end{prop}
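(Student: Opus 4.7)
The plan is to reduce the proposition to the classical fact that taking $\GL_k(V)$-invariants is an exact functor on rational representations in characteristic zero, and therefore commutes with homology. First, I would fix a cofibrant resolution $QA \sonto A$ in $\DGA_k$, so that by construction $\L(A)_V = (QA)_V$ and $\L(A)_V^{\GL} = (QA)_V^{\GL_k(V)}$, each well defined up to quasi-isomorphism in $\cDGA_k$ by Theorem~\ref{t2s1}. The inclusion of DG subalgebras $(QA)_V^{\GL_k(V)} \hookrightarrow (QA)_V$ induces a natural map
$$
\H_\bullet[(QA)_V^{\GL_k(V)}] \,\too\, \H_\bullet[(QA)_V]^{\GL_k(V)}
$$
of graded commutative algebras; the claim is that this map is an isomorphism.

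The key technical step is to verify that, in each homological degree, $(QA)_V$ is a rational (locally finite polynomial) representation of the reductive algebraic group $\GL_k(V)$, and that both the multiplication and the differential are $\GL_k(V)$-equivariant. This is where cofibrancy enters: by standard model-category arguments one may take $QA$ to be an almost free extension of $k$, built by iteratively adjoining free graded generators. The universal construction $R \mapsto R_V$ converts each such generator into a polynomial functor in $V$ and $V^*$ on which $\GL_k(V)$ acts through the standard representation via $\pi_V$ as in \eqref{e1s1}. Iterating, each graded component of $(QA)_V$ becomes a (possibly infinite) direct sum of finite-dimensional polynomial $\GL_k(V)$-submodules, and equivariance of the differential and the product is then immediate from the functoriality of the construction.

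With this in hand, Weyl's complete reducibility theorem yields a canonical isotypic decomposition of each such module, compatible with every $\GL_k(V)$-equivariant linear map. The invariant functor $M \mapsto M^{\GL_k(V)}$ is the projection onto the trivial isotypic component, and is therefore exact on the category of rational $\GL_k(V)$-modules. Exactness implies that this functor commutes with taking homology of $\GL_k(V)$-equivariant chain complexes, which delivers the required isomorphism. Naturality in $A$ is then inherited from the functoriality of $Q$, of $(-)_V$, and of the isotypic decomposition.

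The principal obstacle is the first technical point: confirming rigorously that the $\GL_k(V)$-action on $(QA)_V$ is genuinely rational and locally finite in each homological degree, so that Weyl's theorem becomes applicable. This requires an explicit inspection of the functor \eqref{rootab} on free DG algebras, tracking how the matrix-coordinate description of $A_V$ propagates through the cellular attachment steps that build $QA$. Once this is secured, the rest of the argument is formal representation theory of reductive groups, and no further homotopical input is needed beyond what already goes into the definition of $\L(\,\mbox{--}\,)_V$.
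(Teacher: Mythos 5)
Your argument is correct and is essentially the one underlying the cited source: the paper itself gives no proof of this proposition but quotes it from [BKR], Theorem~2.6, where the isomorphism is likewise obtained from the rationality (local finiteness) of the $\GL_k(V)$-action on $(QA)_V$ together with linear reductivity of $\GL_k(V)$ in characteristic zero, so that the invariants functor is exact and commutes with homology. No discrepancy to report.
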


\subsection{Higher traces} We now  construct the trace maps \eqref{trr1} relating
cyclic homology to representation homology. Given an associative DG algebra $ R $ with the identity
element $ 1_R \in R $ , we write
$$
\quad R_\n := R/[R,R]\quad ,\quad \FT(R) := R/(k \cdot 1_R + [R,R])\ .
$$
Both $ R_\n $ and $ \FT(R) $ are complexes of vector spaces with differentials induced from $R$. 
If $ A \in \Alg_k $ is an ordinary  algebra, we let $ \HC_\bullet(A) $ and $ \rHC_\bullet(A) $ denote 
its cyclic and reduced cyclic homology, respectively. The precise relation between the two 
is explained in \cite{L}, Sect.~2.2.13; here, we only recall a canonical map
\begin{equation}
\la{crc}
\HC_\bullet(A) \to \rHC_\bullet(A)
\end{equation}
which is induced by the projection of complexes $ \CC_\bullet(A) \onto \CC_\bullet(A)/\CC_\bullet(k) $, where $ \CC_\bullet(A) $ is the Connes cyclic complex computing $ \HC_\bullet(A) $.

\vspace{2ex}

The starting point for our construction is the following well-known result due to Feigin and Tsygan.
\bthm[\cite{FT}, Theorem~1] \la{ftt}
For any $ A \in \Alg_k $, there is an isomorphism of graded vector spaces
$$
\rHC_\bullet(A) \cong \H_\bullet[\FT(R)]\ ,
$$
where $ R = QA $ is a(ny) cofibrant resolution of $A$ in $\DGA_k $.
\ethm
\noindent
For a simple conceptual proof of this theorem, we refer to \cite{BKR}, Section~3.

\vspace{2ex}

Now, for any $ R \in \DGA_k $, consider the composite map
$$
R \xrightarrow{\pi_V} R_V \otimes \End\,V \xrightarrow{\id \otimes \Tr} R_V
$$
where $ \pi_V $ is the universal representation of $R$ in $V$ and $\,\Tr:\,\End\,V \to k \,$ is the
usual matrix trace. It is clear that this map factors through $ R_\n $ and its image lies
in $ R^{\GL}_V $. Hence, we get a map of complexes
\begin{equation}
\la{e2s1}
\Tr_V(R)_{\bullet}:\ R_{\natural} \to R_V^{\GL}\ ,
\end{equation}
which extends by multiplicativity to the map of graded commutative algebras
\begin{equation}
\la{e2s11}
\Tr_V(R)_{\bullet}:\ \bSym(R_{\natural}) \to R_V^{\GL}\ ,
\end{equation}
where $ \bSym $ denotes the graded symmetric algebra over $k$.
We will need the following result which is a generalization of a well-known
theorem of Procesi \cite{P} to the case of DG algebras.
\begin{theorem}[\cite{BR}, Theorem~3.1]
\la{t3s1}
For any $ R \in \DGA_k$, the algebra map \eqref{e2s11} is degreewise surjective.
 \end{theorem}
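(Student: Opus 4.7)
The plan is to reduce the statement to a graded/super version of Procesi's classical theorem on simultaneous conjugation invariants of matrices. The first observation is that every piece of the construction---the free product $R \ast \End V$, the centralizer/abelianization, the functor $(-)^{\GL}$, the operator $(-)_\natural$, and the trace map itself---is built only out of the graded multiplicative structure and the Koszul sign rule; the differential on $R$ plays no role. Consequently, surjectivity of $\Tr_V(R)_\bullet$ is insensitive to the differential, and it suffices to treat the case where $R$ is a graded associative $k$-algebra (with zero differential).

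Next, reduce to free algebras. Fix a surjection $T_k(W) \twoheadrightarrow R$ from a tensor algebra on a graded vector space $W$. Functoriality of $(-)_V$ (algebra surjections yield closed immersions of representation schemes) gives a surjection $T_k(W)_V \twoheadrightarrow R_V$. Since $\GL(V)$ is linearly reductive in $\mathrm{char}\,k = 0$, the Reynolds operator preserves surjectivity after passage to invariants: $T_k(W)_V^{\GL} \twoheadrightarrow R_V^{\GL}$. On the source side, $T_k(W)_\natural \twoheadrightarrow R_\natural$ yields $\bSym(T_k(W)_\natural) \twoheadrightarrow \bSym(R_\natural)$. Naturality of the trace map in $R$ then reduces the theorem to the case $R = T_k(W)$.

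For $R = T_k(W)$ with a homogeneous basis $\{w_\alpha\}$ of degrees $d_\alpha$, the adjunction \eqref{S2E12} exhibits $R_V$ as the free graded commutative $k$-algebra on matrix entries $x_{\alpha,ij}$, $1 \le i,j \le \dim V$, of degree $d_\alpha$. Grouping these into matrices $X_\alpha = (x_{\alpha,ij}) \in \End V \otimes R_V$, the $\GL(V)$-action becomes simultaneous conjugation $X_\alpha \mapsto g X_\alpha g^{-1}$, and the trace map sends the class of a monomial $w_{\alpha_1}\cdots w_{\alpha_r}$ to the ordinary matrix trace $\Tr(X_{\alpha_1}\cdots X_{\alpha_r}) \in R_V^{\GL}$. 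The surjectivity claim therefore collapses to the following: the $\GL(V)$-invariants in the free graded commutative algebra on the entries of a family of homogeneous matrices $\{X_\alpha\}$ are generated, as a commutative algebra, by traces of monomials in the $X_\alpha$.

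The hard step is precisely this graded Procesi theorem. When all $d_\alpha = 0$, it is the classical theorem of Procesi \cite{P}. For the general graded case, two standard routes are available: polarize by adjoining auxiliary odd formal parameters so that the generators become effectively even, reducing to the classical statement; or, alternatively, repeat Procesi's proof verbatim inside the symmetric monoidal category of super vector spaces, using that the Cayley--Hamilton identity and the first fundamental theorem for $\GL(V)$ acting on the purely even space $V$ pass through unchanged. Either approach delivers the generation of invariants by (ordinary matrix) traces of monomials, after which the reductions above complete the argument.
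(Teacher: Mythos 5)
Your argument is sound and follows exactly the route this paper signals: Theorem~\ref{t3s1} is imported from \cite{BR} without proof and is explicitly billed as ``a generalization of a well-known theorem of Procesi to the case of DG algebras,'' and your chain of reductions (discard the differential, pass to a free graded algebra $T_k(W)$ via surjectivity of $(\,\mbox{--}\,)_V^{\GL}$ on quotients and linear reductivity of $\GL(V)$, identify $T_k(W)_V$ with the graded polynomial algebra on matrix entries, and invoke the first fundamental theorem for $\GL(V)$ plus polarization--restitution in characteristic zero to get the graded Procesi statement) is the standard and intended one. The only cosmetic point is that Cayley--Hamilton is not needed for the generation statement (only the FFT is), but this does not affect the correctness of your proof.
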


Now, let $R = QA $ be a cofibrant resolution of an ordinary algebra $A \in \Alg_k $. Then, with identifications
of Proposition~\ref{p2s1} and Theorem~\ref{ftt} and in combination with \eqref{crc}, the map \eqref{e2s1} induces
\begin{equation}
\la{e4s1}
\Tr_V(A)_{\bullet} :\ \HC_{\bullet}(A) \to \H_{\bullet}(A,V)^{\GL_k(V)} \ ,
\end{equation}
which is the higher trace map \eqref{trr1} discussed in the Introduction\footnote{To simplify the notation, we will often
write $ \Tr_V(A)_{\bullet} $ as $ (\Tr_V)_{\bullet} $.}.

\subsection{Stabilization}
We now explain how to `stabilize' the family of maps \eqref{e4s1} passing to the infinite-dimensional
limit $\, \dim_k V \to \infty $.
We will work with unital DG algebras $A$ which are {\it augmented} over $k$. We recall that the
category of such DG algebras is naturally equivalent to the category of non-unital DG algebras,
with $ A $ corresponding to its augmentation ideal $ \bar{A} $. We identify these two categories
and denote them by $ \DGA_{k/k} $. Further, to simplify the notation we take $ V = k^d $ and
identify $\, \End\,V = M_d(k) \,$, $\, \GL(V) = \GL_k(d) \,$; in addition, for $ V = k^d $,
we will write  $ A_V $ as $ A_d $.
Bordering a matrix in $ M_d(k) $ by $0$'s on the right and on the bottom gives an embedding
$\,M_d(k) \into M_{d+1}(k) \,$ of non-unital algebras. As a result, for each $ B \in \cDGA_k $,
we get a map of sets
\begin{equation}
\la{isoun0}
\Hom_{\DGA_{k/k}}(\bar{A},\,M_d(B)) \to \Hom_{\DGA_{k/k}}(\bar{A},\,M_{d+1}(B))
\end{equation}
defining a natural transformation of functors from $ \cDGA_k $ to $ \Sets $.
Since $B$'s are unital and $ A $ is augmented, the restriction maps
\begin{equation}
\la{isoun1}
\Hom_{\DGA_k}(A,\,M_d(B)) \stackrel{\sim}{\to} \Hom_{\DGA_{k/k}}(\bar{A},\,M_d(B)) \ ,\quad
\varphi \mapsto \varphi|_{\bar{A}}
\end{equation}
are isomorphisms for all $ d \in \N $. Combining \eqref{isoun0} and \eqref{isoun1},
we thus have natural transformations
\begin{equation}
\la{isoun}
\Hom_{\DGA_k}(A,\,M_d(\,\mbox{--}\,)) \to \Hom_{\DGA_k}(A,\,M_{d+1}(\,\mbox{--}\,))\ .
\end{equation}
By standard adjunction \eqref{S2E12}, \eqref{isoun} yield an inverse system of
morphisms $\,\{ \mu_{d+1, d}: A_{d+1} \to A_d \} \,$ in $ \cDGA_k $. Taking the limit of this
system, we define
$$
A_{{\infty}} := \varprojlim_{d\,\in\,\mathbb N} A_d \ .
$$
Next, we recall that the group $ \GL(d) $ acts naturally on $ A_d $,
and it is easy to check that $\,\mu_{d+1, d}: A_{d+1} \to A_d\,$ maps the subalgebra
$ A_{d+1}^{\GL} $ of $\GL $-invariants in $ A_{d+1} $ to the subalgebra $ A_d^{\GL}$ of
$\GL $-invariants in $A_d$. Defining $ \GL(\infty) := \varinjlim\, \GL(d) $ through
the standard inclusions $ \GL(d) \into \GL(d+1) $, we extend the actions of $ \GL(d) $
on $ A_d $ to an action of $ \GL(\infty) $ on $ A_{\infty} $ and let $ A^{\GL(\infty)}_{{\infty}} $
denote the corresponding invariant subalgebra. Then one can prove (see~\cite{T-TT})
\begin{equation}
\la{isolim}
A^{\GL(\infty)}_{{\infty}} \cong \varprojlim_{d\,\in\,\mathbb N} A^{\GL(d)}_d \ .
\end{equation}
This isomorphism  allows us to equip $ A^{\GL(\infty)}_{{\infty}} $ with a natural topology:
namely, we put first the discrete topology on each $ A^{\GL(d)}_d $ and equip
$\,\prod_{d \in \N} A^{\GL(d)}_d \,$ with the product topology; then, identifying
$ A^{\GL(\infty)}_{{\infty}} $ with a subspace in
$\,\prod_{d \in \N} A^{\GL(d)}_d \,$ via \eqref{isolim}, we put on
$ A^{\GL(\infty)}_{{\infty}} $ the induced topology. The
corresponding topological DG algebra will be denoted $ A^{\GL}_{{\infty}} $.

Now, for each $ d \in \N $, we have the commutative diagram
\[
\begin{diagram}[small, tight]
  &       &     \FT(A)     &        & \\
  &\ldTo^{\Tr_{d+1}(A)_\bullet}  &           &  \rdTo^{\Tr_{d}(A)_\bullet} &  \\
A_{d+1}^{\GL} &       & \rTo^{\mu_{d+1, d}}  &        &  A_d^{\GL}
\end{diagram}
\]
where $\, \FT(A) \,$ is the cyclic functor of Feigin and Tsygan ({\it cf.} Theorem~\ref{ftt})
restricted to $ \DGA_{k/k} $.
Hence, by the universal property of inverse limits, there is a morphism of complexes $\, \Tr_{\infty}(A)_\bullet :\, \FT(A) \to A^{\GL}_{{\infty}}\,$ that factors $ \Tr_{d}(A)_\bullet $ for each $ d \in \N $. We extend this morphism to a
homomorphism of commutative DG algebras:
\begin{equation}
\la{trinf}
\Tr_{\infty}(A)_\bullet :\ \bL[\FT(A)] \to A^{\GL}_{{\infty}}\ .
\end{equation}

The following lemma is one of the key technical results of \cite{BR}; it should be compared to
Theorem~\ref{t3s1} in the finite-dimensional case ($ d = \dim_k V$).
\blemma[\cite{BR}, Lemma~3.1]
\la{dense}
The map \eqref{trinf} is {\rm topologically} surjective: i.e., its image is dense in $ A^{\GL}_{{\infty}} $.
\elemma

\vspace{1ex}

\noindent
Letting $ A_\infty^{\Tr}  $ denote the image of \eqref{trinf}, we
define the functor
\begin{equation}
\la{trfun}
(\,\mbox{--}\,)^{\Tr}_{\infty}\,:\ \DGA_{k/k} \to \cDGA_k\ ,\quad A \mapsto A_\infty^{\Tr}\ .
\end{equation}
The algebra maps \eqref{trinf} then give a morphism of functors
\begin{equation}
\la{morfun}
\Tr_{\infty}(\,\mbox{--}\,)_\bullet :\ \bL[\FT(\,\mbox{--}\,)] \to (\,\mbox{--}\,)^{\Tr}_{\infty}\ .
\end{equation}

Now, to state the main result of \cite{BR} we recall that
the category of augmented DG algebras $ \DGA_{k/k} $ has a natural model structure induced from
$ \DGA_k $. We also recall the derived Feigin-Tsygan functor
$\, \L\FT(\,\mbox{--}\,):\, \Ho(\DGA_{k/k}) \to \Ho(\cDGA_k)\, $ inducing the isomorphism of
Theorem~\ref{ftt}.
\bthm[\cite{BR}, Theorem~4.2]
\la{eqfun}
$(a)$ The functor \eqref{trfun} has a total left derived functor
$\,\L(\,\mbox{--}\,)^{\Tr}_{\infty}\,:\ \Ho(\DGA_{k/k}) \to \Ho(\cDGA_k)\,$.

$(b)$ The morphism \eqref{morfun} induces an isomorphism of functors
$$
\Tr_{\infty}(\,\mbox{--}\,)_\bullet :\  \bL[\L\FT(\,\mbox{--}\,)] \stackrel{\sim}{\to}
\L (\,\mbox{--}\,)^{\Tr}_{\infty}\ .
$$
\ethm
\noindent
By definition, $\, \L(\,\mbox{--}\,)^{\Tr}_{\infty} $ is given by
$\,
\L(A)^{\Tr}_{\infty} = (QA)^{\Tr}_{\infty}\,$,
where $ QA $ is a cofibrant resolution of $A$ in $ \DGA_{k/k}\,$. For an ordinary augmented $k$-algebra $A \in \Alg_{k/k} $, we set
$$
\DRep_\infty(A)^\Tr :=  (QA)^{\Tr}_{\infty} \ .
$$
By part $(a)$ of Theorem~\ref{eqfun}, $ \DRep_\infty(A)^\Tr $ is well defined. On the other hand, part $(b)$ implies
\begin{corollary}\la{corf1}
For any $ A \in \Alg_{k/k} $, $\, \Tr_{\infty}(A)_\bullet $ induces an isomorphism of graded commutative algebras
\begin{equation}
\la{funhc}
\bL[\rHC(A)] \cong \H_\bullet[\DRep_\infty(A)^\Tr]\ .
\end{equation}
\end{corollary}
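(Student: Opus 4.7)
The statement should follow almost immediately from Theorem~\ref{eqfun}(b) by passage to homology, the only technical point being that the graded symmetric algebra functor $\bL$ commutes with homology for complexes of $k$-vector spaces in characteristic zero. Concretely, I would proceed as follows. Choose a cofibrant resolution $QA \sonto A$ in $\DGA_{k/k}$ (for instance, an almost free resolution, which is cofibrant since $A$ is concentrated in nonnegative degrees). By definition, $\L\FT(A) = \FT(QA)$ and $\L(A)^{\Tr}_\infty = (QA)^{\Tr}_\infty = \DRep_\infty(A)^\Tr$.

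Evaluating the natural isomorphism of Theorem~\ref{eqfun}(b) at $A$, I get an isomorphism in $\Ho(\cDGA_k)$
$$
\Tr_\infty(QA)_\bullet :\ \bL[\FT(QA)] \stackrel{\sim}{\longrightarrow} (QA)^{\Tr}_\infty \ ,
$$
which is represented by a quasi-isomorphism of commutative DG algebras. Applying $\H_\bullet$ to both sides yields an isomorphism of graded commutative algebras. On the right, $\H_\bullet[(QA)^{\Tr}_\infty] = \H_\bullet[\DRep_\infty(A)^\Tr]$ by definition, so it only remains to identify $\H_\bullet\bigl[\bL(\FT(QA))\bigr]$ with $\bL[\rHC(A)]$.

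The latter identification is the key step, and I expect it to be the main (mild) obstacle in the proof. It rests on the fact that over a field $k$ of characteristic zero the graded symmetric algebra functor $\bL:\,\Com(k) \to \cDGA_k$ preserves quasi-isomorphisms and commutes with passage to homology. Indeed, since $\mathrm{char}\,k = 0$, each symmetric power $\bL^n(V) = (V^{\otimes n})^{S_n}$ is obtained by applying the exact averaging idempotent $\frac{1}{n!}\sum_{\sigma \in S_n}\sigma$ to $V^{\otimes n}$; combined with the Künneth formula over $k$, this gives a natural isomorphism of graded vector spaces $\H_\bullet[\bL^n(V)] \cong \bL^n[\H_\bullet(V)]$ for every complex $V$ of $k$-vector spaces, and summing over $n$ yields $\H_\bullet[\bL(V)] \cong \bL[\H_\bullet(V)]$ as graded commutative algebras.

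Applying this to $V = \FT(QA)$ and using Theorem~\ref{ftt} to identify $\H_\bullet[\FT(QA)] \cong \rHC_\bullet(A)$, I obtain $\H_\bullet\bigl[\bL(\FT(QA))\bigr] \cong \bL[\rHC(A)]$. Combining this with the quasi-isomorphism above produces the desired isomorphism of graded commutative algebras $\bL[\rHC(A)] \cong \H_\bullet[\DRep_\infty(A)^\Tr]$, which, by construction, is precisely the one induced by $\Tr_\infty(A)_\bullet$. The naturality of all the identifications (in particular of Theorem~\ref{ftt} and of the Künneth isomorphism) ensures that this isomorphism is independent of the choice of cofibrant resolution $QA$.
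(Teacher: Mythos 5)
Your proposal is correct and follows essentially the same route as the paper, which simply observes that the corollary is a direct consequence of Theorem~\ref{eqfun}(b) upon passing to homology. The details you supply — that the isomorphism of functors is realized by the quasi-isomorphism $\Tr_\infty(QA)_\bullet$, and that $\bL$ commutes with $\H_\bullet$ over a field of characteristic zero via the averaging idempotent and K\"unneth, combined with Theorem~\ref{ftt} — are exactly the implicit content of the paper's one-line justification.
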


In fact, one can show that $ \H_\bullet[\DRep_\infty(A)^\Tr] $
has a natural structure of a graded Hopf algebra,
and the isomorphism of Corollary~\ref{corf1} is actually an
isomorphism of Hopf algebras. This isomorphism is analogous
to the famous Loday-Quillen-Tsygan isomorphism computing the
stable homology of matrix Lie algebras $ \gl_n(A)$ in terms of
cyclic homology (see \cite{LQ,T}).
Heuristically, it implies that the cyclic homology of an augmented
algebra is determined by its representation homology.

\section{NC Poisson structures and DG representation schemes}
\la{G}
In this section, we propose a definition of a NC Poisson structure on an associative
DG algebra $A \,\in\, \DGA_k $. Our definition generalizes the notion of a noncommutative Poisson structure
in the sense of~\cite{CB}. We show that our noncommutative DG Poisson structures induce (via the natural trace maps)
DG Poisson algebra structures on $A_V^{\GL(V)}$ for all $V$. Subsequently, in the next section, we will introduce
an NC $P_{\infty}$-structure, which is a strong homotopy version of the notion of a NC Poisson structure.
We note that our definition of and results relating to NC Poisson algebras and NC $P_{\infty}$-algebras may be mimicked
to give definitions of, and corresponding results for NC $n$-Poisson algebras and NC $n-\text{P}_{\infty}$ algebras for every $n$. At the level of homology, our construction gives a higher extension of Crawley-Boevey's notion of an
$H_0$-Poisson structure on an algebra $A$. Indeed, suppose that $A$ has a cofibrant resolution
$ R \in \DGA_k\,$ which is equipped with a NC $n$-Poisson structure. Then, this last structure on $R$ induces
a graded Lie algebra structure $\{\mbox{--},\mbox{--}\}_{\n}$ on the (shifted) cyclic homology
$\text{HC}_{\bullet}(A)[n]$, and we will refer to $\,\{\mbox{--},\mbox{--}\}_{\n}\,$ as a
{\it derived $n$-Poisson structure} on $A$.

The following result is a direct generalization of Theorem~\ref{t3s2int} stated in the Introduction.
\begin{theorem} \la{t3s2}
Let $A$ be an algebra equipped with a derived $n$-Poisson structure.
Then, there exists a unique graded $n$-Poisson algebra structure
$\{\mbox{--},\mbox{--}\}$ on
$\mathrm{H}_{\bullet}(A,V)^{\GL}$ such that
 $$(\mathrm{Tr}_V)_{\bullet} (\{\alpha,\beta\}_{\n})\,=\, \{(\mathrm{Tr}_V)_{\bullet}(\alpha),(\mathrm{Tr}_V)_{\bullet}(\beta)\}  $$
for all $\alpha,\beta \,\in\, \mathrm{HC}_{\bullet}(A)$.
\end{theorem}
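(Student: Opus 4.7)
The plan is to deduce Theorem \ref{t3s2} from the chain-level refinement Theorem \ref{NCPoiss} (which promotes an NC $n$-Poisson structure on a DG algebra $R$ to a genuine graded $n$-Poisson CDGA structure on $R_V^{\GL}$, compatible with the trace \eqref{e2s1}), together with the Procesi-type surjectivity of Theorem \ref{t3s1}. By definition, the derived $n$-Poisson structure on $A$ arises from an NC $n$-Poisson structure on some cofibrant resolution $R \sonto A$ in $\DGA_k$; fix such an $R$. Applying Theorem \ref{NCPoiss} endows the commutative DG algebra $R_V^{\GL}$ with a graded $n$-Poisson bracket whose differential is a graded Poisson derivation, so the bracket descends to $\H_\bullet[R_V^{\GL}]$. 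Invoking Proposition \ref{p2s1} identifies this target with the desired graded $n$-Poisson $\H_\bullet(A,V)^{\GL(V)}$.

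To verify the compatibility identity, I would use the second assertion of Theorem \ref{NCPoiss}: the chain trace $\Tr_V(R)_\bullet : R_\natural \to R_V^{\GL}$ of \eqref{e2s1} is a morphism of graded Lie algebras (after the appropriate $[n]$-shift). Passing to homology, the Feigin-Tsygan isomorphism $\H_\bullet(R_\natural) \cong \rHC_\bullet(A)$ of Theorem \ref{ftt} combined with the canonical map $\HC_\bullet(A) \to \rHC_\bullet(A)$ from \eqref{crc} produces exactly the required identity
$$(\Tr_V)_\bullet(\{\alpha,\beta\}_\n) \,=\, \{(\Tr_V)_\bullet(\alpha),(\Tr_V)_\bullet(\beta)\}$$
for all $\alpha,\beta \in \HC_\bullet(A)$.

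Uniqueness then follows for free from Theorem \ref{t3s1}: the multiplicative extension $\bSym(R_\natural) \to R_V^{\GL}$ of the trace is degreewise surjective, so after passing to homology the image of $(\Tr_V)_\bullet : \HC_\bullet(A) \to \H_\bullet(A,V)^{\GL(V)}$ generates the target as a graded commutative algebra. Since a graded $n$-Poisson bracket is uniquely determined on a generating set by the graded Leibniz rule, the compatibility relation forces the bracket to be unique. Independence of the bracket from the choice of resolution $R$ is a separate point I would address by invoking the homotopy invariance of the NC Poisson formalism developed in Section \ref{G}: two cofibrant NC Poisson resolutions of $A$ are connected by a zig-zag of homotopy equivalences under which the induced DG Poisson brackets on the representation schemes are themselves homotopy-equivalent, and hence yield the same bracket in homology.

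The main obstacle is clearly the black-boxed chain-level result, Theorem \ref{NCPoiss}. Its proof requires unwinding the definition of an NC Poisson structure on $R$ (essentially, a double bracket of degree $n$ satisfying Leibniz and a suitably modified Jacobi identity, modulo inner-derivation ambiguities) and checking that the naively induced bracket on the generators of $R_V$ extends to a well-defined graded $n$-Poisson bracket on the whole of $R_V$; that this bracket restricts to the invariants $R_V^{\GL}$; that the differential is a graded Poisson derivation; and that the chain trace \eqref{e2s1} intertwines the two brackets. These are explicit verifications performed through the universal representation \eqref{e1s1} and the explicit description of $R_V$ on generators, and they constitute the technical core of Section \ref{G}. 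Once Theorem \ref{NCPoiss} is in hand, Theorem \ref{t3s2} follows cleanly as above.
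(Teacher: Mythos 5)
Your proposal follows the paper's own route exactly: the paper proves Theorem~\ref{t3s2} precisely by taking a cofibrant resolution $R$ carrying the NC $n$-Poisson structure, applying Theorem~\ref{NCPoiss}(i) to get a DG $n$-Poisson structure on $R_V^{\GL}$ compatible with the trace, passing to homology via Proposition~\ref{p2s1} and Theorem~\ref{ftt}, and deducing uniqueness from the Procesi-type surjectivity of Theorem~\ref{t3s1}, with independence of the resolution handled by the homotopy invariance in Theorem~\ref{NCPoiss}(ii). The only small inaccuracy is your parenthetical gloss of an NC Poisson structure as a double bracket — in this paper it is the weaker notion of a Lie bracket on $A_\natural$ whose adjoint action factors through $\DER(A)_\natural$ (double brackets appear only in Section~\ref{s3} as a source of examples) — but this does not affect the argument since you black-box Theorem~\ref{NCPoiss} exactly as the paper does.
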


Theorem~\ref{t3s2} is a consequence of the more fundamental Theorem~\ref{NCPoiss} that we will prove
in this section. In fact, we show (see Theorem~\ref{NCPoiss} (i)) that a NC $n$-Poisson
structure on a DGA $R$ induces DG $n$-Poisson structures on $R_V^{\GL}$ (via natural trace maps) in a functorial manner.
In Theorem~\ref{t3s2}, the graded $n$-Poisson structure on $\mathrm{H}_{\bullet}(A,V)^{\GL}$ is precisely the one induced on homology
by the DG $n$-Poisson structure on $R_V^{\GL}$ coming from Theorem~\ref{NCPoiss}(i).

 Further, we give a reasonable definition of the ``homotopy category'' of NC Poisson algebras and
show a stronger statement (Theorem~\ref{NCPoiss} (ii)) at the level of homotopy categories.

\subsection{NC Poisson algebras}
\la{G.1}
Fix $ A\,\in\, \DGA_k $, and let $\DER(A)^\n $ denote the subcomplex of the DG Lie algebra $\DER(A) $
comprising those derivations whose image is contained in $\,[A,A]\,$. It is easy to see that
$ \DER(A)^\n $ is a DG Lie ideal of $ \DER(A) $, so that $\,\DER(A)_{\natural}:= \DER(A)/\DER(A)^\n $
is a DG Lie algebra.

Now, let $ V $ be a representation of $\,\DER(A)_{\natural}\,$, i.~e. a DG Lie algebra
homomorphism $\, \varrho\,:\, \DER(A)_{\natural} \to \END_k V \,$.

\subsubsection{Definitions} \la{G.1.1} By {\it Poisson structure}
on $V$ we will mean a DG Lie algebra structure on $\,V\,$ whose adjoint representation
$ \mbox{\rm ad}:\,V \to \END_k V $ factors through $ \varrho \,$: i.~e., there is a morphism
of DG Lie algebras $\,i :\, V \rar \DER(A)_{\natural}\,$ such that $\,\mbox{\rm ad} = \varrho \circ i \,$.

For any DG algebra $A$, the natural action of $ \DER(A) $ on $A$ induces a Lie algebra action of
$ \DER(A)_\n $ on $ A_\n $. A {\it NC\ Poisson structure} on $A$ is then,
by definition, a Poisson structure on the representation $A_{\natural}$. It is easy to
see that if $A$ is a commutative DG algebra, a NC Poisson structure on $A$ is exactly
the same thing as a Poisson bracket on $A$.

Let $A$ and $B$ be NC Poisson DG algebras, i.e. objects in $\DGA_k$ equipped with NC Poisson structures.
A {\it morphism} $\,f:\, A \rar B $ of NC Poisson DG algebras is then a morphism $ f: A \to  B $ in
$\DGA_k$ such that $f_{\natural}:\, A_{\natural} \rar B_{\natural} $
is a morphism of DG Lie algebras. We can therefore define the category
$\mathtt{NCPoiss}_k $.

Further, note that if $B$ is a NC Poisson DG algebra and $ \Omega $ is the de Rham algebra of the affine line
({\it cf.}~\cite{BKR}, Section B.4), then $\, B \otimes \Omega $ can be given the structure of a NC Poisson DG algebra
via extension of scalars. Indeed, since $ [B \otimes \Omega,\,
B \otimes \Omega] = [B,B] \otimes\Omega $, we have
$\,(B \otimes \Omega)_{\natural} \cong B_{\natural} \otimes \Omega $. The DG
Lie structure on $B_{\natural} \otimes \Omega $ is simply the one obtained by extending the corresponding
structure on $B_{\natural} $. The structure map $ i\,:\,B_{\natural} \otimes \Omega \rar \DER(B \otimes \Omega)_{\natural} $ is simply the composite map
$$
B_{\natural} \otimes \Omega \xrightarrow{i \otimes \Omega}\DER(B \otimes \Omega)_{\Omega,\natural}
\to \DER(B \otimes \Omega)_{\natural}\ ,
$$
where $\, \DER(B \otimes \Omega)_{\Omega,\n}:= \DER_{\Omega}(B\otimes \Omega)/\DER_{\Omega}(B \otimes \Omega)^\n $
and $\DER_{\Omega}(B\otimes \Omega) $ denotes the DG Lie algebra of $\Omega$-linear derivations from
$ B \otimes \Omega$ into itself, with $\DER_{\Omega}(B \otimes \Omega)^\n $ being the Lie ideal of
derivations whose image is contained in $ B_{\natural} \otimes \Omega $.

We can now introduce the notion of P-homotopy along the lines of~\cite{BKR}, Proposition B.2 and Remark B.4.3.
To be precise, we call two morphisms $\,f,g:A \rar B\,$ in $\mathtt{NCPoiss}$ are
{\it P-homotopic} if there is a morphism $ h:A \rar B \otimes \Omega $ such that $ h(0)=f $ and $h(1)=g$.
It is easy to check that P-homotopy is an equivalence relation on $ \Hom_{\mathtt{NCPoiss}}(A,B)\,$ for
any $A$ and $B$ in $ \mathtt{NCPoiss}_k $. Thus, we can define $ \Ho^*(\mathtt{NCPoiss}) $ to be the
category whose objects are the cofibrant (in $\DGA_k$) DG algebras equipped with NC Poisson structures,
with $\, \Hom_{\Ho^*(\mathtt{NCPoiss})}(A,B)\,$ being the space of P-homotopy classes of morphisms
in $\Hom_{\mathtt{NCPoiss}}(A,B)$.

\noindent
{\bf Notation.} In what follows, for a DG algebra $A$ with a NC Poisson structure, the symbol $[\mbox{--},\mbox{--}]$ shall be used to denote the corresponding Lie bracket on $A_{\n}$. The symbol $\{\mbox{--},\mbox{--}\}_{\n}$ shall be used to denote the induced Lie bracket on $\H_{\bullet}(A_{\n})$.

\subsection{The main theorem}
\la{G.1.2}
The following theorem is the first main result of this paper.
\begin{theorem}
\la{NCPoiss}
$(a)$ The functor $\,(\,\mbox{--}\,)_V^{\GL}:\,\DGA_k \rar \cDGA_k\,$ enriches to give the
following commutative diagram
\begin{equation}
\la{di1}
\begin{diagram}
\mathtt{NCPoiss}_k &\rTo^{(\,\mbox{--}\,)_V^{\GL}} & \mathtt{Poiss}_k\\
 \dTo & &\dTo\\
\DGA_k & \rTo^{(\,\mbox{--}\,)_V^{\GL}} & \cDGA_k
\end{diagram}
\end{equation}
where the vertical arrows are the forgetful functors.

$(b)$ The functor $\,(\,\mbox{--}\,)_V^{\GL}:\,\mathtt{NCPoiss}_k \rar \mathtt{Poiss}_k $ descends to a
functor $\,\L^*(\,\mbox{--}\,)_V^{\GL}:\,\Ho^*(\mathtt{NCPoiss}_k) \rar
\Ho(\mathtt{Poiss}_k)$. Further, $\, \L(\,\mbox{--}\,)_V^{\GL}:\,\Ho(\DGA_k) \rar \Ho(\cDGA_k)\,$
enriches to give a commutative diagram
\begin{equation}
\la{di2}
\begin{diagram}
\Ho^*(\mathtt{NCPoiss}_k) &\rTo^{\L^*(\,\mbox{--}\,)_V^{\GL}} & \Ho(\mathtt{Poiss}_k)\\
 \dTo & &\dTo\\
\Ho(\DGA_k) &\rTo^{\L(\,\mbox{--}\,)_V^{\GL}} & \Ho(\cDGA_k)
\end{diagram}
\end{equation}
\end{theorem}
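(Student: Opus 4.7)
\noindent
\textbf{Proof plan for Theorem~\ref{NCPoiss}.}
The plan is to construct the DG Poisson structure on $R_V^{\GL}$ by transporting the canonical Poisson bracket on $\bSym(R_\n)$ through the trace map, and then to verify homotopy invariance by comparing $(B\otimes\Omega)_V^{\GL}$ with $B_V^{\GL}\otimes\Omega$.

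For part $(a)$, fix $R\in\mathtt{NCPoiss}_k$. The graded Lie bracket $[\,\mbox{--}\,,\,\mbox{--}\,]$ on $R_\n$ extends by the Leibniz rule to a graded Poisson bracket $\{\,\mbox{--}\,,\,\mbox{--}\,\}$ on the free graded symmetric algebra $\bSym(R_\n)$. By Theorem~\ref{t3s1}, the multiplicative trace
$$
\Tr_V\,:\ \bSym(R_\n)\,\onto\, R_V^{\GL}
$$
is a degreewise surjective map of graded commutative DG algebras, so it suffices to show that $\ker \Tr_V$ is a Poisson ideal; a unique DG Poisson structure on $R_V^{\GL}$ for which $\Tr_V$ is Poisson is then forced. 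The key input is the Lie algebra homomorphism $\DER(R)_\n \to \DER(R_V^{\GL})$ obtained by lifting derivations of $R$ to $\GL(V)$-equivariant derivations of $R_V$ via the universal representation $\pi_V$ and observing that derivations in $\DER(R)^\n$ act trivially on $\GL$-invariants (a general fact implicit in \cite{BKR}). A direct computation from $\Tr_V=(\id\otimes\Tr)\circ\pi_V$ together with $(D_V\otimes\id)\circ\pi_V=\pi_V\circ D$ yields the intertwining identity
$$
D_V(\Tr_V(r))\,=\,\Tr_V(D(r))\qquad (D\in\DER(R)_\n,\ r\in R_\n)\ .
$$
Composing this with the NC Poisson structure map $i:R_\n\to\DER(R)_\n$ shows that for every $\alpha\in R_\n$ the two derivations $\Tr_V\circ\{\alpha,\,\mbox{--}\,\}$ and $(i(\alpha)\cdot\,\mbox{--}\,)\circ\Tr_V$ from $\bSym(R_\n)$ to $R_V^{\GL}$ (with $R_V^{\GL}$-module structure via $\Tr_V$) agree on the generators $R_\n$, hence agree everywhere. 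In particular $\{\alpha,\ker\Tr_V\}\subseteq\ker\Tr_V$ for every $\alpha\in R_\n$, and by Leibniz $\ker\Tr_V$ is a Poisson ideal of $\bSym(R_\n)$; the bracket descends to a graded Poisson bracket on $R_V^{\GL}$, inheriting skew-symmetry, Jacobi and Leibniz from $\bSym(R_\n)$. Functoriality is immediate from the naturality of $\pi_V$, $\Tr_V$ and the fact that a morphism in $\mathtt{NCPoiss}_k$ induces a Lie algebra map on commutator quotients.

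For part $(b)$, the central point is that the construction of $(a)$ is compatible with scalar extension by the de Rham algebra $\Omega$ of the affine line. Tensoring the universal representation $\pi_V: B \to B_V\otimes \End V$ with $\id_\Omega$ and applying the universal property of $(B\otimes\Omega)_V$ produces a natural morphism of commutative DG algebras
$$
\mu\,:\ (B\otimes\Omega)_V^{\GL}\,\longrightarrow\,B_V^{\GL}\otimes\Omega
$$
that commutes with the evaluation maps at $t=0$ and $t=1$. Using the identification $(B\otimes\Omega)_\n = B_\n\otimes\Omega$ and the fact that the NC Poisson bracket on $B\otimes\Omega$ is the $\Omega$-linear extension of the one on $B$, the construction of $(a)$ applied to both sides shows that $\mu$ is a morphism of DG Poisson algebras. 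Given a P-homotopy $h:A\to B\otimes\Omega$ between morphisms $f,g\in\Hom_{\mathtt{NCPoiss}}(A,B)$, the composite $\mu\circ h_V^{\GL}: A_V^{\GL}\to B_V^{\GL}\otimes\Omega$ is then a morphism in $\mathtt{Poiss}_k$ whose evaluations at $0$ and $1$ are $f_V^{\GL}$ and $g_V^{\GL}$; this is the required P-homotopy in $\mathtt{Poiss}_k$. Hence $(-)_V^{\GL}$ descends to $\L^*(-)_V^{\GL}:\Ho^*(\mathtt{NCPoiss}_k)\to\Ho(\mathtt{Poiss}_k)$. Commutativity of diagram~\eqref{di2} is then automatic, since any $A\in\Ho^*(\mathtt{NCPoiss}_k)$ is already cofibrant in $\DGA_k$, so $\L(A)_V^{\GL}=A_V^{\GL}$ coincides with the underlying DGA of $\L^*(A)_V^{\GL}$.

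The main obstacle I anticipate is the verification that the comparison map $\mu$ respects the Poisson structures; this amounts to tracing the construction of $(a)$ through scalar extension by $\Omega$ and checking its compatibility with the trace. Once this is in place, both parts follow cleanly from the Procesi-type surjectivity of Theorem~\ref{t3s1} and the naturality of $\Tr_V$.
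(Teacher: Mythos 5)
Your proposal is correct and follows essentially the same route as the paper: extend the Lie bracket on $R_\n$ to the canonical Poisson bracket on $\bSym(R_\n)$, use the Procesi-type surjectivity of Theorem~\ref{t3s1} together with the derivations induced on $R_V^{\GL}$ (the paper's Lemma~\ref{pG.2.0}) to show that $\ker\Tr_V$ is a Poisson ideal, and treat P-homotopies via scalar extension by $\Omega$. The only point you treat more lightly than the paper is the final descent step, where the paper separately verifies (via \cite{BKR}, Proposition B.2) that P-homotopic morphisms become equal both in $\Ho(\DGA_k)$ and in $\Ho(\mathtt{Poiss}_k)$.
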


\vspace{1.5ex}

The next section, Section~\ref{G.1.3}  shall have certain preliminaries we require for the proof of Theorem~\ref{NCPoiss}.
Section~\ref{G.1.4} shall contain the proof of Theorem~\ref{NCPoiss}.

\subsubsection{From DG Lie to DG Poisson algebras} \la{G.1.3}

The following proposition is a (minor) generalization of Theorem~3.3 of~\cite{Sched09}.

\begin{prop}\la{pG.1.3.1}
(i)  One has a functor $$\mathtt{Lie} \rar \mathtt{Poiss},\,\,\,\,\, V \mapsto \bSym(V)$$ from the category of DG Lie algebras to the category of
DG Poisson algebras.\\
(ii) Further, for any DG Poisson algebra $A$ and a morphism $f:V \rar A$ of DG Lie algebras, one has a unique morphism
$\tilde{f}: \bSym(V) \rar A$ of DG Poisson algebras such that $f = \tilde{f} \circ \iota$ where $\iota:V \rar \bSym(V)$ is
 the obvious inclusion.

\end{prop}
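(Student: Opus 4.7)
The plan is to realize $\bSym(V)$ as the free graded Poisson algebra generated by the DG Lie algebra $V$, i.e.\ to show that the functor claimed in (i) is left adjoint to the forgetful functor $\mathtt{Poiss}_k \to \mathtt{Lie}_k$ that remembers only the underlying DG Lie structure. Concretely, I would first define the Poisson bracket on $\bSym(V)$ on a pair of generators by $\{v,w\} := [v,w]$ for $v,w \in V$, and then extend it to all of $\bSym(V)$ by forcing the graded Leibniz rule: for $v \in V$ and a monomial $w_1 \cdots w_n$ (with $w_i \in V$),
\[
\{v,w_1\cdots w_n\} := \sum_{i=1}^{n} (-1)^{|v|(|w_1|+\cdots+|w_{i-1}|)}\, w_1\cdots w_{i-1}\,[v,w_i]\, w_{i+1}\cdots w_n,
\]
and then by graded antisymmetry on the first slot. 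One checks that this descends from the tensor algebra to $\bSym(V)$ (the only thing to verify is invariance under the graded symmetry relation, which is immediate from the formula).

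The main verification is the graded Jacobi identity for this extended bracket. I would argue by induction on the total number of generators in the three arguments: the base case, where all three arguments lie in $V$, is exactly the Jacobi identity of the DG Lie algebra $V$; the inductive step follows formally from the Leibniz rule, since both sides of Jacobi are graded derivations in each slot and therefore are determined by their values on generators. A parallel induction, together with the fact that the Lie bracket on $V$ is a chain map, shows that the differential of $\bSym(V)$ (inherited from $V$) is a graded derivation of $\{-,-\}$. This bracket is clearly natural in morphisms of DG Lie algebras, since a Lie morphism $V \to W$ extends uniquely to a morphism of graded commutative algebras $\bSym(V)\to\bSym(W)$ which respects the bracket on generators and hence (again by Leibniz on both sides) on all of $\bSym(V)$, proving (i).

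For (ii), given a DG Lie morphism $f: V \to A$ into a DG Poisson algebra, define $\tilde f: \bSym(V) \to A$ as the unique graded commutative DG algebra homomorphism extending $f$ (which exists because $\bSym(V)$ is the free graded commutative DG algebra on $V$). The content of the statement is that $\tilde f$ is automatically a Poisson morphism. This is verified by induction on monomial length: for $v,w\in V$ we have $\tilde f\{v,w\} = f[v,w] = [f v,fw] = \{\tilde f v,\tilde f w\}$; then the Leibniz rule holding in both $\bSym(V)$ and $A$ promotes this equality from generators to arbitrary elements. Uniqueness of $\tilde f$ is automatic because $\bSym(V)$ is generated as a (graded commutative) algebra by $V$, on which $\tilde f$ is pinned down by $\tilde f\circ\iota=f$.

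The only real obstacle is bookkeeping: one must carefully track Koszul signs to confirm that the extended bracket is well-defined on the symmetric (as opposed to tensor) algebra, that it satisfies graded Jacobi, and that the differential is a graded derivation of it. Once the sign conventions are fixed the arguments reduce, by the derivation property in each slot, to identities on generators, where they follow from the corresponding identities in the DG Lie algebra $V$.
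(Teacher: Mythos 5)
Your proposal is correct and follows essentially the same route as the paper: extend the bracket from $V$ to $\bSym(V)$ by the graded Leibniz rule and antisymmetry, then reduce every verification (Jacobi, compatibility with the differential, the morphism property in (i) and (ii)) to generators by induction on monomial length, exactly as in the paper's displayed computation. The only difference is that you prove the Jacobi identity and well-definedness directly via the ``biderivation vanishing on generators'' argument, whereas the paper outsources this step to a cited result of Schedler (its Proposition on the $P_\infty$-structure of $\bSym$ of an $L_\infty$-algebra); your self-contained version is a valid substitute.
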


\begin{proof}
 Clearly, $\bSym(V)$ is a commutative DG algebra. We extend the Lie bracket on $V$ to a Poisson bracket on
$\bSym(V)$ via the rules\footnote{$|u|$ denotes the degree of a homogenous element $u$ in $\bSym(V)$.}
$$[u,v.w] \,=\, [u,v].w +(-1)^{|u||v|}v.[u,w],\,\,\,\,\,\, [u,v]=(-1)^{|u||v|}[v,u] \,\text{.}$$
That this indeed gives a well defined DG Poisson structure on $\bSym(V)$ is a special case of Proposition~\ref{pG.2.3.1}.
Given a morphism $f:V \rar W$ of DG Lie algebras, one gets the morphism $\bSym(f)\,:\,\bSym(V) \rar \bSym(W)$
in $\cDGA_k$. We verify that $F:=\bSym(f)$ is a morphism of DG Poisson algebras as follows.

First, suppose that $F([u,v])=[F(u),F(v)]$ and $F([u,w])=[F(u),F(w)]$. Then,
$$ F([u,v.w])\,= \, F([u,v].w) +(-1)^{|u||v|}F(v.[u,w]) $$ $$\,= \, F([u,v]).F(w) +(-1)^{|F(u)||F(v)|}F(v).F([u,w])$$ $$\,=\, [F(u),F(v)].F(w) +(-1)^{|F(u)||F(v)|}F(v).[F(u),F(w)] $$ $$\,=\, [F(u),F(v).F(w)] \,=\, [F(u),F(vw)]\, \text{.} $$
Hence, by induction, it suffices to verify that $F([x,y])=[F(x),F(y)]$ on $V$. Since $F|_{V}=f$, the latter is
indeed true. This proves (i).

The above computation proves (ii) as well (with $\tilde{f}$ being the unique extension of $f$ to a morphism $ \bSym(V) \rar
A$ in $\cDGA_k$).
\end{proof}

Two morphisms $f,g:V \rar W$ of DG Lie algebras are {\it L-homotopic}
if there exists a morphism $h: V \rar W \otimes \Omega$ of DG Lie algebras\footnote{The DG Lie structure on $W \otimes \Omega$ is obtained from that on $W$ by extension of scalars.} such that $h(0)=f,\, h(1)=g$.
\begin{lemma} \la{pG.1.3.2}
If $f$ is L-homotopic to $g$, $\bSym(f)$ is P-homotopic to $\bSym(g)$.
\end{lemma}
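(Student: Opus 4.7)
The strategy is to deduce the desired P-homotopy $H : \bSym(V) \to \bSym(W) \otimes \Omega$ directly from the L-homotopy $h : V \to W \otimes \Omega$ by invoking the universal property established in Proposition~\ref{pG.1.3.1}(ii). The main idea is that, since $\Omega$ is a commutative DG algebra, extension of scalars preserves Poisson structures; hence $\bSym(W) \otimes \Omega$ is naturally a DG Poisson algebra, and the L-homotopy $h$ provides a DG Lie map $V \to \bSym(W) \otimes \Omega$ which must then extend uniquely to a DG Poisson morphism out of $\bSym(V)$.

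\textbf{Key steps.} First, I would record that $\bSym(W) \otimes \Omega$ carries a natural DG Poisson bracket obtained by $\Omega$-linearly extending the DG Poisson bracket of $\bSym(W)$ provided by Proposition~\ref{pG.1.3.1}(i); in particular, $V \otimes \Omega \subset \bSym(W) \otimes \Omega$ is a DG Lie subalgebra (in the sense that the Poisson bracket, restricted to $V \otimes \Omega$, agrees with the DG Lie bracket of $W$ extended $\Omega$-linearly). Second, compose the L-homotopy with the canonical inclusion:
\[
\tilde{h} : V \xrightarrow{\,h\,} W \otimes \Omega \ \hookrightarrow\ \bSym(W) \otimes \Omega,
\]
which is a morphism of DG Lie algebras. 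Third, apply Proposition~\ref{pG.1.3.1}(ii) with $A = \bSym(W) \otimes \Omega$ to obtain a \emph{unique} morphism of DG Poisson algebras
\[
H : \bSym(V) \,\longrightarrow\, \bSym(W) \otimes \Omega
\]
such that $H|_V = \tilde{h}$.

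\textbf{Verification of endpoints.} It remains to check $H(0) = \bSym(f)$ and $H(1) = \bSym(g)$. For $\varepsilon \in \{0,1\}$, the evaluation $\mathrm{ev}_\varepsilon : \Omega \to k$ is a morphism of commutative DG algebras, so $\id \otimes \mathrm{ev}_\varepsilon : \bSym(W) \otimes \Omega \to \bSym(W)$ is a morphism of DG Poisson algebras. Hence the composite $H(\varepsilon) := (\id \otimes \mathrm{ev}_\varepsilon) \circ H : \bSym(V) \to \bSym(W)$ is a morphism of DG Poisson algebras whose restriction to $V$ equals $h(\varepsilon)$, i.e.\ $f$ when $\varepsilon = 0$ and $g$ when $\varepsilon = 1$. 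By the uniqueness clause of Proposition~\ref{pG.1.3.1}(ii), we conclude $H(0) = \bSym(f)$ and $H(1) = \bSym(g)$, so $H$ is the required P-homotopy.

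\textbf{Anticipated difficulty.} There is essentially no analytical obstruction here; the only point requiring care is confirming that $\bSym(W) \otimes \Omega$ genuinely inherits a DG Poisson structure by $\Omega$-linear extension and that this is compatible with the Poisson bracket constructed in Proposition~\ref{pG.1.3.1}(i) on $\bSym(W \otimes \Omega)$, so that the relevant universal property still applies when $\Omega$-coefficients are present. This is a routine check (cf.\ the analogous compatibility already used in Section~\ref{G.1.1} to define the NC Poisson structure on $B \otimes \Omega$), but it is the only conceptual step needed beyond direct invocation of Proposition~\ref{pG.1.3.1}.
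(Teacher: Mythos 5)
Your proof is correct and follows essentially the same route as the paper: compose the L-homotopy with the inclusion $W \otimes \Omega \hookrightarrow \bSym(W) \otimes \Omega$, extend uniquely to a DG Poisson morphism out of $\bSym(V)$ via Proposition~\ref{pG.1.3.1}(ii), and identify the endpoints using the uniqueness clause of that proposition. Your write-up is somewhat more explicit about the $\Omega$-linear Poisson structure on $\bSym(W)\otimes\Omega$ and the evaluation maps, but the argument is the same.
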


\begin{proof}
Indeed, the natural inclusion $W \otimes \Omega \hookrightarrow \bSym(W) \otimes \Omega$ is a morphism of DG Lie algebras. Hence, its composition with $h$ is a morphism of DG Lie algebras. By Proposition~\ref{pG.1.3.1} (ii), there exists a unique morphism $\tilde{h}:\bSym(V)
\rar \bSym(W) \otimes \Omega$ of DG Poisson algebras extending $h$. Since $\tilde{h}(0)|_{V}=\bSym(f)|_{V}$, $\tilde{h}(0)=\bSym(f)$ by Proposition~\ref{pG.1.3.1} (ii). Similarly, $\tilde{h}(1)=\bSym(g)$. This proves the desired proposition.
\end{proof}

\subsubsection{Proof of Theorem~\ref{NCPoiss}}\la{G.1.4}

It follows from Proposition~\ref{pG.1.3.1} and Lemma~\ref{pG.1.3.2} that if $f,g:A \rar B$ are P-homotopic morphisms of NC Poisson algebras, then,
$\bSym(f_{\natural}),\bSym(g_{\natural})\,:\,\bSym(A_{\natural}) \rar \bSym(B_{\natural})$ are P-homotopic morphisms of DG Poisson algebras. The following proposition shows that the DG Poisson structure of $\bSym(A_{\natural})$ induces one on $A_V^{\GL}$ via $\mathrm{Tr}_V(A)_{\bullet}$.

\blemma \la{pG.1.4.1}
If $(\mathrm{Tr}_V)_{\bullet}(\beta)=0$, then for any $\alpha\,\in\,\bSym(A_{\natural})$, $(\mathrm{Tr}_V)_{\bullet}([\alpha,\beta])=0$.
\elemma

\bproof
Since
$$[\alpha_1\alpha_2,\beta]=\pm \alpha_1.[\alpha_2,\beta] \pm [\alpha_1,\beta].\alpha_2\,,$$
and since $(\mathrm{Tr}_V)_{\bullet}$ is a morphism in $\cDGA_k$, it suffices to prove the desired
proposition for $\alpha\,\in\,A_{\natural}$. In this case, viewing $\alpha$ as an element of
$A_{\natural}[1]$, consider the element $i(\alpha)\,\in\,\der(A)_{\natural}$. Choose any
 $\partial_{\alpha} \,\in\,\der(A)$ whose image in $\der(A)_{\natural}$ is $i(\alpha)$. By Lemma~\ref{pG.2.0}, there is a (graded) derivation $\psi_{\alpha}$ of $A_V^{\GL}$ such that
 $$(\mathrm{Tr}_V)_{\bullet}(\partial_{\alpha}(\beta))=\psi_{\alpha}((\mathrm{Tr}_V)_{\bullet}(\beta)) $$
 for all $\beta \in A_{\natural}$. Hence,
   $$(\mathrm{Tr}_V)_{\bullet}([\alpha,\beta])=\psi_{\alpha}((\mathrm{Tr}_V)_{\bullet}(\beta)) $$ for all $\beta \in A_{\natural}$. Since
$(\mathrm{Tr}_V)_{\bullet}([\alpha,-])$ as well as $\psi_{\alpha}((\mathrm{Tr}_V)_{\bullet}(-))$ are derivations with respect to
 $(\mathrm{Tr}_V)_{\bullet}$, it follows that
 $$(\mathrm{Tr}_V)_{\bullet}([\alpha,\beta])=\psi_{\alpha}((\mathrm{Tr}_V)_{\bullet}(\beta)) $$
 for all $\beta \in \bSym(A_{\natural})$.
The right hand side of the above equation indeed vanishes when $(\mathrm{Tr}_V)_{\bullet}(\beta)=0$.

\eproof

By Lemma~\ref{pG.1.4.1}, the antisymmetric pairing on $A_V^{\GL}$ given by
$$\{f,g\}:=(\mathrm{Tr}_V)_{\bullet}([(\mathrm{Tr}_V)_{\bullet}^{-1}(f),(\mathrm{Tr}_V)_{\bullet}^{-1}(g)])$$
is well defined. That $\{-,-\}$ equips $A_V^{\GL}$ with the structure of a DG Poisson algebra follows from Proposition~\ref{pG.1.3.1} (i) and Theorem~\ref{t3s1}.

 Further, the following argument shows that if $f:A \rar B$ is a morphism of NC Posson algebras, then
$f_V^{\GL}\,:\,A_V^{\GL} \rar B_V^{\GL}$ is a morphism of DG Poisson algebras. Indeed, since $\bSym(f_{\natural})$ is a morphism of DG Poisson algebras,  $(\mathrm{Tr}_V)_{\bullet} \circ \bSym(f_{\natural})\,=\,f_V^{\GL} \circ (\mathrm{Tr}_V)_{\bullet}$ and
$$\{(\mathrm{Tr}_V)_{\bullet}(\alpha),(\mathrm{Tr}_V)_{\bullet}(\beta)\}=(\mathrm{Tr}_V)_{\bullet}([\alpha,\beta])$$
for all $\alpha,\beta \,\in\, \bSym(A_{\natural})$ (and similarly for $B$),
$$\{f_V^{\GL}((\mathrm{Tr}_V)_{\bullet}(\alpha)),f_V^{\GL}((\mathrm{Tr}_V)_{\bullet}(\beta))\}=\{(\mathrm{Tr}_V)_{\bullet}(\bSym(f_{\natural})(\alpha)),(\mathrm{Tr}_V)_{\bullet}(\bSym(f_{\natural})(\beta))\}$$
$$ =(\mathrm{Tr}_V)_{\bullet}([\bSym(f_{\natural})(\alpha),\bSym(f_{\natural})(\beta)])=(\mathrm{Tr}_V)_{\bullet}\circ \bSym(f_{\natural})([\alpha,\beta])$$
$$=f_V^{\GL}(\{(\mathrm{Tr}_V)_{\bullet}(\alpha),(\mathrm{Tr}_V)_{\bullet}(\beta)\}) \text{.}$$
Theorem~\ref{t3s1} then completes the verification that $f_V^{\GL}\,:\,A_V^{\GL} \rar B_V^{\GL}$ is a morphism of DG Poisson algebras. This completes the proof of Theorem~\ref{NCPoiss} (i). Note that the same argument also shows that if $f,g:A \rar B$ are P-homotopic morphisms of
NC Poisson algebras, then $f_V^{\GL},g_V^{\GL}$ are P-homotopic morphisms of DG Poisson algebras: indeed, if $h:A \rar B\otimes \Omega$ is a P-homotopy between $f$ and $g$, then $h_V^{\GL}:A_V^{\GL} \rar B_V^{\GL} \otimes \Omega$ is a morphism of  DG Poisson algebras by (a trivial modification of) the same argument as above.

Therefore, to complete the proof of Theorem~\ref{NCPoiss} (ii), we only need to verify two assertions:\\
 (a) If $f,g:A \rar B$ are P-homotopic morphisms in $\mathtt{NCPoiss}$, then $\gamma(f)=\gamma(g)$ in $\Ho(\DGA_k)$.\\
 (b) If $f,g:C_1 \rar C_2$ are P-homotopic morphisms in $\mathtt{Poiss}$, then $\gamma(f)=\gamma(g)$ in $\Ho(\mathtt{Poiss})$.\\
 We check (a): let $p_A\,:\, QA \stackrel{\sim}{\rar} A$ be a cofibrant resolution of $A$. Then, $\gamma(f)=\gamma(fp_A)$ and $\gamma(g)=\gamma(gp_A)$ in $\Ho(\DGA_k)$. Since $f,g:A \rar B$ are polynomially M-homotopic to each other (see~\cite{BKR} Remark B.4.3), so are $fp_A$ and $gp_A$. By~\cite{BKR}, Proposition B.2, $\gamma(fp_A)=\gamma(gp_A)$ in $\Ho(\DGA_k)$. Thus, $\gamma(f)=\gamma(g)$ in $\Ho(\DGA_k)$. (b) is checked similarly, using the natural analog of~\cite{BKR}, Proposition B.2 for $\mathtt{Poiss}$ (~\cite{BKR}, see the end of remark B.4.3).

\subsection{Remark}
\la{R11}
As mentioned in~\cite{BKR}, Section 5.6, we expect that the category $ \mathtt{NCPoiss}_k $ has a natural model structure compatible
with the standard (projective) model structure on $ \DGA_k $. The notation $\,\Ho^*\,$
is to remind the reader that, since $\mathtt{NCPoiss}_k $ is not yet proven to be a model category,
$\Ho^*(\mathtt{NCPoiss}_k)$ is not yet confirmed to be an abstract homotopy category in Quillen's sense.
One way to remedy this problem
is to use the construction of a fibre product (homotopy pullback) of model categories due to
To\"en (see \cite{To}). First, passing to infinite-dimensional limit $ V \to V_\infty $
(see \cite{BR}), we can stabilize the family of
representation functors replacing $ (\,\mbox{--}\,)_V^{\GL} $ in \eqref{di1} by
\begin{equation}
\la{reinf}
(\,\mbox{--}\,)_{\infty}^{\Tr}: \DGA_k \to  \cDGA_k\ .
\end{equation}
By~\cite{BR}, Theorem 4.2, \eqref{reinf} is a left Quillen functor having the total left derived functor
$\,\L(\,\mbox{--}\,)^{\Tr}_{\infty}\,$.
Then, by \cite{To}, the homotopy pullback of \eqref{reinf} along the forgetful functor $\,\mathtt{Poiss}_k \to \cDGA_k \,$
is a model category $\, \DGA_k \times^h_{\cDGA_k} \mathtt{Poiss}_k\,$, which, in view of \eqref{di1}, comes together
with a functor
$$
\mathtt{NCPoiss}_k \to \DGA_k \times^h_{\cDGA_k} \mathtt{Poiss}_k\ .
$$
It is easy to show that this last functor is homotopy invariant, so it induces a functor
\begin{equation}
\la{quieq}
\Ho^*(\mathtt{NCPoiss}_k) \to \Ho(\DGA_k \times^h_{\cDGA_k} \mathtt{Poiss}_k) \ .
\end{equation}
Our expectation is that \eqref{quieq} is an equivalence of categories.
Since To\"en's construction is known to give a correct notion of
`homotopy fibre product' (see \cite{Be}), such an equivalence would
mean that our {\it ad hoc} definition of NC Poisson structures is the
correct one from homotopical point view. This would also give a precise
meaning to the claim that the NC Poisson structures are the weakest structures
on $A$ inducing the usual Poisson structures under the representation functor
(since the fibre product $ \DGA_k \times^{h}_{\cDGA_k} \mathtt{Poiss}_k $ is
exactly the category that has this property).

\section{Noncommutative $P_{\infty}$-algebras}
\la{G.2}

The definition of an NC Poisson structure can be generalized to a definition of a NC $P_{\infty}$-structure. In this subsection, we show that a
NC $P_{\infty}$-structure on $A$ induces (in a functorial way) a $P_{\infty}$-structure on $A_V^{\GL}$ for all finite dimensional $V$.
Further, we show that homotopy equivalent NC $P_{\infty}$-structures induce homotopy equivalent $P_{\infty}$-structures on each $A_V^{\GL}$.
The main result in this subsection, i.e, Theorem~\ref{NCGinfRep}, is therefore, a stronger version of Theorem~\ref{NCPoiss}.
One can similarly define the notion of a NC $n$-$P_{\infty}$ structure on a DGA.
We remark here that Theorem~\ref{t3s2} holds word for word with NC $n$-Poisson replaced by NC $n$-$P_{\infty}$.
The reader who is interested only in NC Poisson structures may skip this section and move on the next section.

\subsection{Definitions}
\la{G.2.1}

A {\it $P_{\infty}$-structure} on a representation $V$ of $\DER(A)_{\natural}$ is a $L_{\infty}$-algebra structure
on $V$ whose adjoint representation\footnote{Recall that is $V$ is a $L_{\infty}$-algebra, one has a (structure) $L_{\infty}$-morphism $\text{ad}\,:\, V \rar
\End_k(V)$.} factors through $\varrho$: i.e, there is a $L_{\infty}$-morphism $i:V \rar \DER(A)_{\natural}$ (of $L_{\infty}$-algebras) such that $\varrho \circ i \,=\, \text{ad}$.

A {\it NC $P_{\infty}$-structure} on an object $A$ of $\DGA_k$ is, by definition, a NC $P_{\infty}$-
structure on $A_{\natural}$ (thought of as a representation of $\DER(A)_{\natural}$ as in Section~\ref{G.1.1}). Equivalently,
a NC $P_{\infty}$-algebra is a DG algebra $A$ such that $A_{\natural}$  is equipped with a
$L_{\infty}$-structure $\{l_n: \wedge^n (A_{\natural}) \rar (A_{\natural}) \}_{n \geq 1}$ such that $l_n$ has degree $n-2$ and
for all $a_1,..,a_{n-1}\,\in\,A$ homogenous, $l_n(\bar{a}_1,...,\bar{a}_{n-1},\mbox{--}):A_{\natural} \rar A_{\natural}$ is induced on
$A_{\natural}$ by a derivation $\partial_{a_1,...,a_{n-1}}:A \rar A\,\,\,$\footnote{ $\partial_{a_1,...,a_{n-1}}$ is any element of $\DER(A)$ such that
its image in $\DER(A)_{\natural}$ coincides with $i(\bar{a}_1 \wedge...\wedge \bar{a}_{n-1})$.} of degree $n-2+\sum_i |a_i|$.

A {\it morphism of NC $P_{\infty}$-algebras} is a collection of maps $f_1,\bar{f_2},..$ such that $f_1:A \rar B$ is a morphism of DG algebras
 and $\{\bar{f_n}:\wedge^n(A_{\natural}) \rar B_{\natural}\}_{n \geq 1}$ form an $L_{\infty}$-morphism \footnote{$\bar{f_1}:A_{\natural} \rar B_{\natural}$ is the map induced by $f_1$.}. We further require that for all
$a_1,..,a_{n-1} \in A$,
$\bar{f_n}(\bar{a}_1,...,\bar{a}_{n-1},-):A_{\natural} \rar B_{\natural}$ is induced by a degree $\sum |a_i|+n-1$ operator $\partial^f_{a_1,..,a_{n-1}}:
A \rar B$ such that the collection $\{\partial^f_{\textbf{a}_S}\}_{S \subset \{1,..,n-1\}}$ is a $f_1$-polyderivation of
 multi-degree $(|a_1|+1,..,|a_{n-1}|+1)$. Here, for $S=\{i_1<..<i_p\}$,
$\{\partial^f_{\textbf{a}_S}\}_{S}:=\partial^f_{a_{i_1},...,a_{i_p}}$. The reader is referred to~\cite{BKR}, Section 5.5
for the definition of polyderivations and related material.

In addition, note that if $B$ is a NC $P_{\infty}$-algebra and $\Omega$ is the de-Rham algebra of the affine line, then $B \otimes \Omega$ naturally acquires the structure of a NC $P_{\infty}$-algebra via extension of scalars.

This allows us to define the notion of homotopy along the lines of~\cite{BKR}, Proposition B.2 and Remark B.4.3. A homotopy between morphisms $f,g:A \rar B$ of $P_{\infty}$-algebras is a morphism $h:A \rar B \otimes
\Omega$ of $P_{\infty}$-algebras such that $h(0)=f,\,h(1)=g$.

We have thus, defined the category $\mathtt{NCP}_{\infty}$ of NC $P_{\infty}$ algebras. The full subcategory of $\mathtt{NCP}_{\infty}$
algebras whose objects are commutative DGAs equipped with NC $P_{\infty}$ structure will be called $\mathtt{P}_{\infty}$. We note
that our definition of $P_{\infty}$ algebra is more restrictive than the definition of $P_{\infty}$ in the operadic sense (see \cite{CVdb}
 for example). This definition, however, coincides with a definition of $\text{Poisson}_{\infty}$ algebras that has been studied earlier
in the literature (see \cite{CF, Sched09} for example).

The category $\Ho^*(\mathtt{NCP}_{\infty})$
 is defined to be the category whose objects are objects of $\mathtt{NCP}_{\infty}$ that are cofibrant in $\DGA_k$ such that
$\text{Hom}_{\Ho^*(\mathtt{NCP}_{\infty})}(A,B)$ is the space of homotopy equivalence classes in $\text{Hom}_{\mathtt{NCP}_{\infty}}(A,B)$.
The category $\mathtt{P}_{\infty}$ of $P_{\infty}$ algebras and its ``homotopy category'' are analogously defined on the commutative side.

\subsection{The main result} \la{G.2.2} The following result is the main result of this subsection. It strengthens Theorem~\ref{NCPoiss}.

\begin{theorem} \la{NCGinfRep}
(i) The functor $(-)_V^{\GL}:\DGA_k \rar \cDGA_k$ enriches to give the following commutative diagram of functors (vertical arrows being forgetful functors)
/$$\begin{diagram}
\mathtt{NCGP}_{\infty}& \rTo^{(-)_V^{\GL}} &\mathtt{P}_{\infty}\\
\dTo& &  \dTo\\
\DGA_k& \rTo^{(-)_V^{\GL}} &\cDGA_k
\end{diagram}$$
(ii) Further, the functor $(-)_V^{\GL}:\mathtt{NCP}_{\infty} \rar \mathtt{P}_{\infty}$ descends to a functor
$$\Ho^*{(-)_V^{\GL}}:\Ho^*(\mathtt{NCP}_{\infty}) \rar \Ho^*(\mathtt{P}_{\infty}) \text{.}$$
\end{theorem}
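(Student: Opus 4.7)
The plan is to follow the template of Theorem~\ref{NCPoiss}, lifting each ingredient to the strong homotopy setting. The backbone is an $L_\infty$-analog of Proposition~\ref{pG.1.3.1}: the assignment $V \mapsto \bSym(V)$ extends to a functor from $L_\infty$-algebras to $P_\infty$-algebras, with higher Poisson brackets on $\bSym(V)$ obtained by extending each $L_\infty$-operation $l_n:\wedge^n V \to V$ to an $n$-fold polyderivation with respect to the commutative product, and any $L_\infty$-morphism $f:V \to A$ into a $P_\infty$-algebra extends uniquely to a $P_\infty$-morphism $\tilde f:\bSym(V)\to A$. The construction proceeds by an induction on polynomial degree analogous to the one in Proposition~\ref{pG.1.3.1}, with the higher Jacobi identities on $\bSym(V)$ inherited from those on $V$ via the Leibniz rule for polyderivations.

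Next, I would descend the induced $P_\infty$-structure on $\bSym(A_\n)$ to $A_V^{\GL}$ using the surjective trace map $(\Tr_V)_\bullet:\bSym(A_\n)\to A_V^{\GL}$ of Theorem~\ref{t3s1}. The key step is the strong-homotopy analog of Lemma~\ref{pG.1.4.1}: if $(\Tr_V)_\bullet(\beta)=0$ then $(\Tr_V)_\bullet\bigl(l_n(\alpha_1,\dots,\alpha_{n-1},\beta)\bigr)=0$ for every choice of $\alpha_i$. When the $\alpha_i$ lie in $A_\n$, the defining condition of a NC $P_\infty$-structure provides a derivation $\partial_{a_1,\dots,a_{n-1}}$ of $A$ whose class in $\DER(A)_\n$ represents $i(\bar a_1\wedge\dots\wedge\bar a_{n-1})$; Lemma~\ref{pG.2.0} then yields a derivation $\psi_{a_1,\dots,a_{n-1}}$ of $A_V^{\GL}$ intertwining with $(\Tr_V)_\bullet$, exactly as in the proof of Lemma~\ref{pG.1.4.1}. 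The polyderivation property of $l_n$ in its remaining arguments extends the vanishing inductively from $A_\n$ to all of $\bSym(A_\n)$. The formula $\{f_1,\dots,f_n\}:=(\Tr_V)_\bullet\bigl(l_n((\Tr_V)_\bullet^{-1}(f_1),\dots,(\Tr_V)_\bullet^{-1}(f_n))\bigr)$ is therefore well-defined, and all higher Jacobi identities are inherited from $\bSym(A_\n)$ by surjectivity.

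For functoriality in part (i), I would treat a morphism $(f_1,\bar f_2,\dots)$ of NC $P_\infty$-algebras by noting that $\{\bar f_n\}$ defines an $L_\infty$-morphism $A_\n \to B_\n$, which then extends uniquely to a $P_\infty$-morphism $\bSym(A_\n)\to\bSym(B_\n)$ by the $L_\infty$-version of Proposition~\ref{pG.1.3.1}. The polyderivation assumption on each $\bar f_n$ allows a direct repetition of the descent argument in Theorem~\ref{NCPoiss}(i) to produce the required $P_\infty$-morphism $(f_1)_V^{\GL}:A_V^{\GL}\to B_V^{\GL}$. For part (ii), homotopy invariance is handled by the same $B\otimes\Omega$ device as in Section~\ref{G.1.4}: a homotopy $h:A \to B\otimes\Omega$ of NC $P_\infty$-morphisms produces $h_V^{\GL}:A_V^{\GL}\to (B\otimes\Omega)_V^{\GL}\cong B_V^{\GL}\otimes\Omega$ (the isomorphism is immediate from the definition of $\rtv{\,\mbox{--}\,}$ together with commutativity of $\Omega$), which is a $P_\infty$-homotopy between $(f)_V^{\GL}$ and $(g)_V^{\GL}$; combined with the argument at the end of Section~\ref{G.1.4} invoking~\cite{BKR}, Proposition~B.2, this yields the functor $\Ho^*(\mathtt{NCP}_\infty)\to\Ho^*(\mathtt{P}_\infty)$.

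The main obstacle I anticipate is bookkeeping rather than conceptual: realizing each $l_n$ on $A_V^{\GL}$ requires careful management of signs, polyderivation indices, and the partition sums appearing in the $L_\infty$-morphism and $P_\infty$-coherence identities, and one has to confirm that every such identity on $\bSym(A_\n)$ pushes through the surjective but highly non-injective trace. The cleanest organization, and the route I would take, is to set up all extended brackets and $P_\infty$-morphisms on $\bSym(A_\n)$ once via the universal property of the $L_\infty$-enhanced $\bSym$ functor, and then to deduce every assertion about $A_V^{\GL}$ by applying $(\Tr_V)_\bullet$ to the corresponding identity on $\bSym(A_\n)$, invoking Theorem~\ref{t3s1} so that no information is lost along the way.
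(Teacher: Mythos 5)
Your proposal follows the paper's own proof essentially step for step: the $L_\infty$-to-$P_\infty$ enhancement of $\bSym$ with its universal property (the paper's Propositions~\ref{pG.2.2.1} and~\ref{pG.2.2.2}, quoted from Schedler), the vanishing lemma for $(\Tr_V)_\bullet$ applied to $l_n$ and to the morphism components $f_n$ (Lemmas~\ref{pG.2.3.1} and~\ref{pG.2.3.2}, both resting on Lemma~\ref{pG.2.0}), descent via the surjectivity of Theorem~\ref{t3s1}, and the $B\otimes\Omega$ device for homotopy invariance. The approach is correct and matches the paper's.
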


The proof of this theorem will be organized along the lines of the proof of Theorem~\ref{NCPoiss}.

\subsection{From $L_{\infty}$ to $P_{\infty}$ algebras}\la{G.2.3}

 The following proposition is (part of) Theorem 3.15 in \cite{Sched09}.

\begin{prop} \la{pG.2.2.1}
 Let $V$ be a $L_{\infty}$-algebra. Then, $\bSym(V)$ (with the obvious multiplication) inherits the structure of a $P_{\infty}$-algebra such that the
 $L_{\infty}$-operations on $\bSym(V)$ extend
those on $V$ via the formula
$$ l_n(v_1,.,v_{n-1},v.w)=l_n(v_1,..,v_{n-1},v).w+{(-1)}^{(\sum |v_i|+n-2)|v|}v.l_n(v_1,...,v_{n-1},w) \text{.}$$
\end{prop}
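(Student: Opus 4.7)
The plan is to mimic the strategy of Proposition \ref{pG.1.3.1}: extend each $L_{\infty}$-operation $l_n$ from $V^{\wedge n}$ to $\bSym(V)^{\wedge n}$ by multi-Leibniz, and then verify the higher Jacobi identities by reducing to the generators. Concretely, for homogeneous $v_1, \ldots, v_{n-1} \in V$, define $l_n(v_1, \ldots, v_{n-1}, -)$ as the unique graded derivation of $\bSym(V)$ (of degree $n-2 + \sum_i |v_i|$) extending the given operation on $V$. Graded antisymmetry in the first $n-1$ slots propagates this to the other slots, and one checks consistency by a Koszul sign computation: on inputs of the form $(v_1, \ldots, v_{n-2}, xy, z)$ with $x, y, z \in V$, expanding by Leibniz in the $(n-1)$-st slot, permuting the last two arguments on $V$, and recollecting by Leibniz in the last slot produces the same element with the correct sign. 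Induction on word length in each argument then extends $l_n$ consistently to all of $\bSym(V)^{\wedge n}$.

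For each $n$, the higher Jacobi expression
$$\mathcal{J}_n(X_1, \ldots, X_n) := \sum_{i+j = n+1} \sum_{\sigma \in \mathrm{Sh}(i, j-1)} (-1)^{i(j-1)} \chi(\sigma; X) \, l_j\bigl(l_i(X_{\sigma(1)}, \ldots, X_{\sigma(i)}), X_{\sigma(i+1)}, \ldots, X_{\sigma(n)}\bigr)$$
is a multilinear, graded antisymmetric function of $X_1, \ldots, X_n \in \bSym(V)$, where $\chi(\sigma; X)$ denotes the Koszul sign. The key claim is that $\mathcal{J}_n$ is a polyderivation in each slot; granting this, the identity $\mathcal{J}_n \equiv 0$ on $\bSym(V)^{\otimes n}$ reduces (since $\bSym(V)$ is generated as a commutative algebra by $V$) to its restriction to $V^{\otimes n}$, which is precisely the $L_{\infty}$-identity on $V$ and hence holds by hypothesis. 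Functoriality of the construction with respect to $L_{\infty}$-morphisms of $V$ follows along the lines of Proposition \ref{pG.1.3.1}(ii), by extending each structure map $f_n$ of an $L_{\infty}$-morphism to a polyderivation on $\bSym$.

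The technical heart is proving the polyderivation property of $\mathcal{J}_n$. Fixing $n-1$ slots and viewing the remaining ``test slot'' $X_k = ab$ as a product, each summand of $\mathcal{J}_n$ falls into one of two types: either $X_k$ lies outside the inner $l_i$ (in which case Leibniz follows from the derivation property of the outer $l_j$ in that slot), or $X_k$ is one of the inner arguments (in which case applying Leibniz inside $l_i$ produces an extra factor of $a$ or $b$ that must then be pulled across the outer $l_j$ using the defining Leibniz rule for $l_j$). The reassembly of these two cases, across the shuffle sum, is exactly where Koszul signs must be tracked, and will be the main obstacle. A cleaner conceptual route is to encode the $L_{\infty}$-structure on $V$ as a square-zero degree-$(-1)$ coderivation $Q$ on the reduced cofree cocommutative coalgebra $\overline{\bSym}(V[1])$ and to recognize the sought $P_{\infty}$-structure on $\bSym(V)$ as the canonical extension of $Q$ to a coderivation on $\overline{\bSym}(\bSym(V)[1])$ that is compatible with the coalgebra structure induced by the multiplication on $\bSym(V)$; the identity $Q^2 = 0$ then propagates automatically, and one only needs to identify the components of this extended coderivation with the Leibniz formula in the statement.
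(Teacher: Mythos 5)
The paper offers no proof of this proposition to compare against: it is stated as (part of) Theorem~3.15 of \cite{Sched09}, with the proof deferred entirely to that reference. Your strategy --- extend each $l_n$ to the unique graded polyderivation of $\bSym(V)$ agreeing with the given operation on $V^{\otimes n}$, show that the $n$-th Jacobiator $\mathcal{J}_n$ is again a polyderivation in each slot, and conclude that it vanishes because it vanishes on generators --- is the standard route and is, in substance, how the cited result is established. The well-definedness part is genuinely routine: polyderivations of a free graded-commutative algebra are in bijection with their restrictions to generators, and graded antisymmetry of the extension follows because the discrepancy between the two sides of the antisymmetry relation is itself a polyderivation vanishing on $V^{\otimes n}$.

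The one place where your write-up falls short of a proof is exactly the step you flag as ``the main obstacle,'' and it is not a matter of bookkeeping only: when the test slot $X_k=ab$ sits inside the inner operation $l_i$, expanding by Leibniz and then pushing the spectator factor through the outer $l_j$ produces, besides the two wanted Leibniz terms, cross terms of the shape $\pm\, l_i(\ldots,a,\ldots)\cdot l_j(b,\ldots)$. The proposition is true precisely because these cross terms cancel in pairs across the sum over $(i,j)$ with $i+j=n+1$ and over shuffles --- each such term pairs with the one coming from the summand in which the roles of the two operations are exchanged, via graded antisymmetry of the $l$'s and graded commutativity of the product. (For $n=3$, $i=j=2$ this is the classical fact that the Jacobiator of a biderivation bracket is a triderivation; the general case is the same cancellation with more indices.) Without this cancellation $\mathcal{J}_n$ is not known to be a polyderivation and the reduction to generators does not go through, so as written your argument asserts the decisive claim rather than proving it. Your alternative coderivation formulation does not avoid the issue: one must still verify that the product-compatible extension of $Q$ squares to zero, which reduces to the same polyderivation property for $\tilde{Q}^2$.
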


Recall the definition
of a polyderivation from~\cite{BKR}, Section 5.5. Let $\{f_n\}:V \rar G$ be a morphism of $L_{\infty}$-algebras (where $G$ is a $P_{\infty}$-algebra). Equip $\bSym(V)$ with the $P_{\infty}$-structures from Proposition~\ref{pG.2.2.1}. Each $f_n$ uniquely extends to a map
$$f_n:\wedge^n (\bSym(V)) \rar G$$ of degree $n-1$ such that for all $v_1,..,v_{n-1} \in \bSym(V)$,
the maps $\{f_S(v_1,..,v_{n-1},-):\bSym(V) \rar G\}_{S \subset \{1,...,n-1\}}$ constitute a polyderivation
of multi-degree $(|v_1|+1,..,|v_{n-1}|+1)$ with respect to $f_1:\bSym(V) \rar G$ \footnote{$f_1:\bSym(V) \rar G$
 is the obvious extension of the map of complexes $f_1:V \rar G$ to a morphism in $\cDGA_k$.}. Here, for $S=\{i_1<..<i_k\} \subset
\{1,...,n-1\}$, $f_S(v_1,..,v_{n-1},-)$ denotes the map $f_{k+1}(v_{i_1},..,v_{i_k},-):\bSym(V) \rar G$. The following key proposition is due to T.~Schedler (in~\cite{Schedpr}).

\begin{prop}
\la{pG.2.2.2}
The extended maps $\{f_n\}$ constitute the unique $P_{\infty}$-morphism from $\bSym(V)$ to $G$ extending $f:V \rar G$.
\end{prop}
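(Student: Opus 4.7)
The plan is to prove both existence and uniqueness by induction on the total symmetric-algebra degree $\sum_i \deg_{\bSym(V)}(x_i)$ of the arguments $(x_1,\ldots,x_n) \in \bSym(V)^{\times n}$, using $V \subset \bSym(V)$ as the base case.

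For uniqueness, let $\{g_n\}$ be any second $P_\infty$-morphism from $\bSym(V)$ to $G$ extending $f: V \to G$. By the definition of a NC $P_\infty$-morphism recalled in Section~\ref{G.2.1}, for each $n$ the collection of maps $\{g_S(v_1,\ldots,v_{n-1},-)\}_{S \subset \{1,\ldots,n-1\}}$ forms a $g_1$-polyderivation of multi-degree $(|v_1|+1,\ldots,|v_{n-1}|+1)$ in the final slot. This forces $g_n(v_1,\ldots,v_{n-1}, a \cdot b)$ to be a Leibniz-type combination of values of lower $g_k$'s on strictly shorter monomials together with $g_1(a)$ and $g_1(b)$. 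By graded antisymmetry of $g_n$ on $\wedge^n \bSym(V)$, the same Leibniz rule governs every slot. Hence $g_n$ on $\wedge^n \bSym(V)$ is determined by its restriction to $\wedge^n V$; since $g_n|_{\wedge^n V} = f_n$ by hypothesis, the inductive hypothesis gives $g_n = f_n$ throughout.

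For existence, the extensions $\{f_n\}$ are defined on $\wedge^n \bSym(V)$ so that the polyderivation conditions hold by construction; the substantive task is to verify that the family satisfies the $L_\infty$-morphism equations with respect to the $P_\infty$-structure on $\bSym(V)$ of Proposition~\ref{pG.2.2.1} and the $P_\infty$-structure on $G$. Each such quadratic relation is multilinear in $(x_1,\ldots,x_n)$. When an entry $x_i = a \cdot b$ is inserted, on the left-hand side it sits either inside some bracket $l_p$, in which case Proposition~\ref{pG.2.2.1} expands it by the derivation property, or as an outer argument of $f_q$, in which case the polyderivation property expands it; on the right-hand side $x_i$ lies in exactly one $f_{i_j}$, whose polyderivation expansion yields products in $G$ that then interact with the brackets $l_k$ on $G$ via the $P_\infty$-derivation axiom.

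The main obstacle is a sign-and-combinatorics check: one must match, term by term, the Leibniz expansions produced on the two sides of the $L_\infty$-morphism equation, using Koszul signs, the graded antisymmetry of the $l_n$'s, the polyderivation condition on the $f_n$'s, and the derivation property of the $P_\infty$-brackets on $G$. Once this compatibility is established, each quadratic relation on $\bSym(V)^{\times n}$ reduces by induction on $\sum_i(\deg_{\bSym(V)} x_i - 1)$ to the corresponding relation on $V^{\times n}$, which holds because $\{f_n: \wedge^n V \to G\}$ is by hypothesis an $L_\infty$-morphism. Combined with the uniqueness argument, this proves the proposition.
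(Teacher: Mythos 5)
You should know at the outset that the paper contains no proof of this proposition: it is stated as a ``key proposition due to T.~Schedler'' and cited to a personal communication, so there is nothing in the text to compare your argument against. Judged on its own terms, your strategy is the natural one and is almost certainly the intended argument. The uniqueness half is essentially complete as you state it: the polyderivation condition built into the definition of a $P_{\infty}$-morphism, together with graded antisymmetry, expresses $g_n$ on arguments of higher polynomial degree in terms of lower $g_k$'s evaluated on shorter monomials, so induction on total polynomial degree pins $g_n$ down from $g_n|_{\wedge^n V}=f_n$. (A small point worth adding: the induction must also dispose of the scalars $\bSym^0(V)=k$, on which every derivation-type slot vanishes.)

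The existence half is where the entire content of the proposition lives, and as written you have named the computation rather than done it. To make the reduction to $V^{\times n}$ legitimate you need the following precise statement: for each $L_{\infty}$-morphism identity and each fixed choice of all arguments but one, \emph{both} sides --- the side collecting the terms $f_i(l_j(\dots),\dots)$ and the side collecting the terms $l_k(f_{i_1}(\dots),\dots,f_{i_k}(\dots))$ --- are polyderivations of the same multidegree with respect to the same algebra map built from $f_1$ in the remaining slot. For the first side this follows because $f_1$ is an algebra map, the $l_j$ on $\bSym(V)$ are derivations in each slot by Proposition~\ref{pG.2.2.1}, and the $f_i$ are polyderivations by construction; for the second side one must check that composing the polyderivation expansion of the $f_{i_j}$ with the derivation property of the $l_k$ on $G$ again yields a polyderivation with respect to $f_1$, and this is exactly where the Koszul-sign bookkeeping is concentrated. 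You assert that the two expansions match but do not verify it; since the proposition reduces entirely to this verification, your argument is a correct outline rather than a proof. Writing out the $n=2$ identity in full (both sides are then single derivations with respect to $f_1$ in each slot, and one checks they agree on products given agreement on generators) and indicating that the general case follows the same pattern by a double induction on $n$ and on polynomial degree would close the gap.
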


We remark that Propositions~\ref{pG.2.2.1} and~\ref{pG.2.2.2} together imply that $V \mapsto \bSym(V)$ is a functor from
the category of $L_{\infty}$-algebras to the category of $P_{\infty}$-algebras. For a $L_{\infty}$-morphism $f:=\{f_n\}:V\rar G$, the unique $P_{\infty}$-morphism $\{f_n\}$ from $\bSym(V)$ to $G$ extending $f:V \rar G$ shall be denoted by $\bSym(f)$. Propositions~\ref{pG.2.2.1} and~\ref{pG.2.2.2} together imply that $\bSym((...))$ is a well defined functor from the category of $L_{\infty}$-algebras to the category of $P_{\infty}$-algebras. Recall that two $L_{\infty}$-morphisms $f,g:V \rar W$ are homotopic if there exists a $L_{\infty}$-morphism $h:V \rar W \otimes \Omega$ such that\footnote{The higher $L_{\infty}$-structure maps of $W \otimes \Omega$ are extended from those of $W$ by $\Omega$-linearity.} $h(0)=f$ and $h(1)=g$. One has the following lemma.

\begin{lemma} \la{pG.2.2.3}
 Suppose that $f,g:V \rar W$ are homotopic $L_{\infty}$-morphisms. Then, their extensions
$\bSym(f),\bSym(g):\bSym(V) \rar \bSym(W)$ are homotopic $P_{\infty}$-morphisms.
\end{lemma}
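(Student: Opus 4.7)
The plan is to mimic exactly the strategy of Lemma~\ref{pG.1.3.2}, with DG Lie algebras replaced by $L_\infty$-algebras, DG Poisson algebras replaced by $P_\infty$-algebras, and the universal property of Proposition~\ref{pG.1.3.1}(ii) replaced by the universal property in Proposition~\ref{pG.2.2.2}. Thus, starting from an $L_\infty$-homotopy $h:V \to W\otimes \Omega$ with $h(0)=f$ and $h(1)=g$, I will produce a $P_\infty$-homotopy $\tilde h:\bSym(V) \to \bSym(W)\otimes \Omega$ between $\bSym(f)$ and $\bSym(g)$ by extending $h$ along the inclusion of generators $W\otimes\Omega \hookrightarrow \bSym(W)\otimes \Omega$ and then invoking the universal property.

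\textbf{Construction.} First, two base-change observations. $(\mathrm{i})$ If $G$ is any $P_\infty$-algebra, then $G\otimes \Omega$ inherits a $P_\infty$-structure by $\Omega$-linear extension of its operations; in particular $\bSym(W)\otimes \Omega$ is a $P_\infty$-algebra. $(\mathrm{ii})$ The inclusion of generators $\iota: W\hookrightarrow \bSym(W)$ is a strict $L_\infty$-morphism: by the Leibniz-type formula in Proposition~\ref{pG.2.2.1}, the $L_\infty$-operations on $\bSym(W)$ restrict on $W$ to the original $L_\infty$-operations on $W$. Tensoring with $\Omega$, $\iota \otimes \id_\Omega : W\otimes \Omega \hookrightarrow \bSym(W)\otimes \Omega$ is a strict $L_\infty$-morphism into the underlying $L_\infty$-algebra of the $P_\infty$-algebra $\bSym(W)\otimes \Omega$. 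The composite
\[
(\iota\otimes \id_\Omega)\circ h\,:\, V \too \bSym(W)\otimes \Omega
\]
is therefore an $L_\infty$-morphism from $V$ into a $P_\infty$-algebra, and by Proposition~\ref{pG.2.2.2} it extends uniquely to a $P_\infty$-morphism
\[
\tilde h\,:\, \bSym(V) \too \bSym(W)\otimes \Omega.
\]

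\textbf{Endpoint evaluation.} It remains to check $\tilde h(0)=\bSym(f)$ and $\tilde h(1)=\bSym(g)$. For $t\in\{0,1\}$, the evaluation $\mathrm{ev}_t:\Omega \to k$ is a morphism of CDGAs, so it induces a strict $P_\infty$-morphism $\bSym(W)\otimes \Omega \to \bSym(W)$. Hence $\mathrm{ev}_t\circ \tilde h$ is a $P_\infty$-morphism $\bSym(V)\to \bSym(W)$ whose restriction to $V\subset \bSym(V)$ equals $\mathrm{ev}_t\circ h = h(t)$, i.e.\ $f$ when $t=0$ and $g$ when $t=1$. The uniqueness clause of Proposition~\ref{pG.2.2.2} then forces $\mathrm{ev}_0\circ \tilde h = \bSym(f)$ and $\mathrm{ev}_1\circ \tilde h = \bSym(g)$, so $\tilde h$ is the desired $P_\infty$-homotopy.

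\textbf{Main obstacle.} Once Propositions~\ref{pG.2.2.1}--\ref{pG.2.2.2} are in hand the argument is essentially formal; the only subtle points are the two base-change compatibilities above, and both are forced by the explicit Leibniz-style extension in Proposition~\ref{pG.2.2.1} (which guarantees that $W\hookrightarrow \bSym(W)$ preserves $L_\infty$-operations on the nose, and that $\Omega$-linear extension of a $P_\infty$-structure remains a $P_\infty$-structure). No further calculation on the higher Taylor components of $h$ or $\tilde h$ is required, since these are controlled by the universal property rather than written out explicitly.
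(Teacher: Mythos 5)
Your proof is correct and follows essentially the same route as the paper: both extend the homotopy $h$ to a $P_\infty$-morphism $\bSym(V)\to\bSym(W)\otimes\Omega$ via the identification of $\bSym(W)\otimes\Omega$ with $\bSym_\Omega(W\otimes\Omega)$ (equivalently, your inclusion-of-generators observation) together with the universal property of Proposition~\ref{pG.2.2.2}, and then pin down the endpoints by the uniqueness clause. Your write-up is somewhat more explicit about the base-change compatibilities, but the argument is the same.
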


\begin{proof}
Let $h: V \rar W \otimes \Omega$ be a $L_{\infty}$-morphism with $h(0)=f$ and $h(1)=g$. The identification $\bSym(W) \otimes \Omega$
with $\bSym_{\Omega}(W \otimes \Omega)$ together with Proposition~\ref{pG.2.2.2} shows that $h$ extends to a $P_{\infty}$-morphism $\bSym(h):\bSym(V[) \rar \bSym(W) \otimes \Omega$ such that
$\bSym(h)(0)=\bSym(f):\bSym(V) \rar \bSym(W)$ and $\bSym(h)(1)=\bSym(g):\bSym(V) \rar \bSym(W)$.

\end{proof}

\subsection{Proof of Theorem~\ref{NCGinfRep}} \la{G.2.4}

\subsubsection{A technical lemma} Our proof requires the following technical lemma.

\begin{lemma} \la{pG.2.0}
Let $\{\phi_S\}$ be a polyderivation of multi-degree $(d_1,..,d_k)$ with respect to $\phi:A \rar B$. The induced polyderivation $\{(\phi_V)_S\}$ of
multi-degree $(d_1,..,d_k)$ from~\cite{BKR}, Lemma 5.5 restricts to a polyderivation $\{(\phi_V)_S\}$ with respect to $\phi_V:A_V^{\GL} \rar B_V^{\GL}$.
.
\end{lemma}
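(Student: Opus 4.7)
The plan is to reduce the statement to a check of $\GL(V)$-equivariance for the polyderivation $\{(\phi_V)_S\}$ on the ambient DG algebras, together with the observation that the polyderivation identity, being an identity of $k$-multilinear maps, is inherited by any subalgebra closed under the individual components.

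First I would recall the construction of $\{(\phi_V)_S\}$ from Lemma~5.5 of \cite{BKR}: given the polyderivation $\{\phi_S\}$ on $A$, the induced operators on $A_V$ are produced by applying the representation functor $(\,\mbox{--}\,)_V$ to the data $\{\phi_S\}$ in a functorial way, using the universal property \eqref{S2E12}. Since the $\GL(V)$-action on $A_V$ is defined (as recalled in Section~\ref{S2}) by applying the functor $(\,\mbox{--}\,)_V$ to the adjoint action on $\,A_V \otimes \End\,V\,$, the naturality of the BKR construction in the argument algebras implies that each $(\phi_V)_S$ intertwines the $\GL(V)$-actions on $A_V$ and $B_V$. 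In particular, $(\phi_V)_S$ maps the invariant subspace $A_V^{\GL}$ into $B_V^{\GL}$.

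Next I would verify that the polyderivation identity with respect to $\phi_V$ is preserved upon restriction. Since $\{(\phi_V)_S\}$ satisfies the polyderivation identity on all of $A_V$, and since the identity is a family of relations involving only $k$-linear combinations of products of values of the $(\phi_V)_S$ and of $\phi_V$ evaluated on elements of $A_V$, these relations continue to hold when the inputs are restricted to $A_V^{\GL}$. Here the essential point (already noted in the paragraph following \eqref{e1s1}) is that $\phi_V$ is itself $\GL(V)$-equivariant, hence restricts to a DG algebra map $A_V^{\GL} \to B_V^{\GL}$, so both sides of the polyderivation relation land in $B_V^{\GL}$.

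The main technical point is the $\GL(V)$-equivariance of each $(\phi_V)_S$. I would establish this by rewriting the defining adjunction identity from the proof of \cite{BKR}, Lemma~5.5 with the $\GL(V)$-action inserted on both sides: since the adjunction \eqref{S2E12} is $\GL(V)$-equivariant (with $\GL(V)$ acting on $\End\,V$ by conjugation) and the original polyderivation $\{\phi_S\}$ on $A, B$ is $\GL(V)$-invariant by construction (it involves no data depending on $V$), the uniqueness clause of the adjunction forces $(\phi_V)_S$ to commute with the $\GL(V)$-action. Once this equivariance is in place, the restriction to $A_V^{\GL}$ and the preservation of the polyderivation identity are immediate, completing the proof.
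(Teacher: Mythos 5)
Your strategy is genuinely different from the paper's, and in outline it would work: $\GL(V)$-equivariance of each $(\phi_V)_S$ immediately gives the restriction to invariants, and the polyderivation identity is indeed inherited upon restriction once both sides land in $B_V^{\GL}$. The paper never discusses equivariance. It instead uses the Leibniz-type identity $(\phi_V)_S(ab)=\sum_{T\sqcup T'=S}\pm(\phi_V)_T(a)\,(\phi_V)_{T'}(b)$ to reduce the claim to a set of algebra generators of $A_V^{\GL}$, and then invokes Theorem~\ref{t3s1} (the DG version of Procesi's theorem), which says that $A_V^{\GL}$ is generated by the image of $(\Tr_V)_\bullet$; on trace elements the induced operators visibly land in $B_V^{\GL}$, and the Leibniz identity propagates this to the whole invariant subalgebra. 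The paper's route buys economy --- Theorem~\ref{t3s1} is already the workhorse of Sections~\ref{G} and~\ref{G.2}, and no unpacking of the construction of $(\phi_V)_S$ is needed --- while yours buys a stronger conclusion (full equivariance rather than mere preservation of invariants).

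The one step you must repair is your justification of that equivariance. The adjunction \eqref{S2E12} identifies Hom-sets of DG \emph{algebra} morphisms, and its uniqueness clause only pins down algebra homomorphisms out of $A_V$; for $S\neq\emptyset$ the component $(\phi_V)_S$ is not an algebra map, so the claim that ``the uniqueness clause of the adjunction forces $(\phi_V)_S$ to commute with the $\GL(V)$-action'' does not literally apply. To make your argument rigorous you would have to check equivariance directly from the construction in \cite{BKR}, Lemma~5.5 --- for instance on the matrix-entry generators of $A_V$, where $(\phi_V)_S$ acts entrywise by $\phi_S$ and hence commutes with the action of $\GL(V)$ on indices --- and then propagate it to all of $A_V$ using the same Leibniz identity. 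At that point you are running essentially the same generators-plus-Leibniz induction as the paper, just with a different generating set and a stronger inductive statement.
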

\begin{proof}
We need to verify that $(\phi_V)_S$ maps $A_V^{\GL}$ to $B_V^{\GL}$. For this, note that
$$ (\phi_V)_S(a.b)= \sum_{T \sqcup T'=S} \pm (\phi_V)_T(a).(\phi_V)_{T'}(b) \text{.}$$ The above equation implies that if $(\phi_V)_S(a)$ and $(\phi_V)_S(b)$ are in the subalgebra generated by the image of $(\mathrm{Tr}_V)_{\bullet}$
for all $S \subset \{1,..,k\}$, then $(\phi_V)_S(ab)$ is in the subalgebra generated by the image of $(\mathrm{Tr}_V)_{\bullet}$
for all $S \subset \{1,..,k\}$. This desired statement now follows from Theorem~\ref{t3s1}.
\end{proof}

\subsubsection{The main body of the proof}
It follows from Propositions~\ref{pG.2.2.1},~\ref{pG.2.2.2} and Lemma~\ref{pG.2.2.3} that if $f,g:A \rar B$ are homotopic morphisms
of NC $P_{\infty}$-algebras, $\bSym(f_{\natural})$ and $\bSym(g_{\natural})$ are homotopic morphisms of $P_{\infty}$-algebras
\footnote{A short explanation is needed here: indeed, $f$ consists of a morphism $f_1:A \rar B$ of DG-algebras and higher components
$\bar{f}_2,...,\bar{f}_n$ such that $\bar{f}_1,..,\bar{f}_n,...$ are Taylor components of a $L_{\infty}$-morphism from $A_{\natural}$
to $B_{\natural}$. The first component of $\bSym(f_{\natural})$ is $\bSym((f_1)_{\natural})$. The higher components are constructed
as in the discussion before Proposition~\ref{pG.2.2.2}. }. Let $l_n:\wedge^n(\bSym(A_{\natural})) \rar \bSym(A_{\natural})$ denote
the structure maps of $\bSym(A_{\natural})$.

\begin{lemma} \la{pG.2.3.1}
 For any $\beta\,\in\,\bSym(A_{\natural})$ such that $(\mathrm{Tr}_V)_{\bullet}(\beta)=0$,
$$ (\mathrm{Tr}_V)_{\bullet}(l_n(\alpha_1,...,\alpha_{n-1},\beta))=0 $$
for any $\alpha_1,..,\alpha_{n-1} \,\in\, \bSym(A_{\natural})$.
\end{lemma}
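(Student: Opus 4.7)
The argument parallels the proof of Lemma~\ref{pG.1.4.1}, but must accommodate the fact that $l_n$ is an $n$-ary graded (anti)symmetric operation rather than a Lie bracket. The plan is to reduce in two stages: first simplify the outer arguments $\alpha_1,\ldots,\alpha_{n-1}$ until each lies in $A_\natural$, and then extend the resulting identity in the last slot $\beta$ using an honest derivation on $A_V^{\GL}$.

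First I would show that it suffices to treat the case $\alpha_i = \bar a_i \in A_\natural$. Proposition~\ref{pG.2.2.1} says that $l_n(\alpha_1,\ldots,\alpha_{n-1},-)$ is a graded derivation of $\bSym(A_\natural)$ in its last slot; combining this with the graded antisymmetry of $l_n$ (which is preserved under the extension to $\bSym(A_\natural)$ by Proposition~\ref{pG.2.2.2}) yields the analogous Leibniz-type expansion in every slot. Since $(\mathrm{Tr}_V)_\bullet$ is a morphism of graded commutative algebras, applying it to such an expansion and inducting on the total word length of $(\alpha_1,\ldots,\alpha_{n-1})$ reduces the claim to the case where every $\alpha_i$ lies in $A_\natural$.

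Next, for $\bar a_1,\ldots,\bar a_{n-1}\in A_\natural$, the definition of an NC $P_\infty$-structure supplies an honest derivation $\partial := \partial_{a_1,\ldots,a_{n-1}}\in\DER(A)$, of the appropriate degree, whose image in $\DER(A)_\natural$ represents $i(\bar a_1\wedge\cdots\wedge \bar a_{n-1})$ and which induces $l_n(\bar a_1,\ldots,\bar a_{n-1},-)$ on $A_\natural$. The standard representation-theoretic lift (the relevant case of Lemma~\ref{pG.2.0}, or \cite{BKR}, Lemma~5.5) promotes $\partial$ to a derivation $\psi = \psi_{a_1,\ldots,a_{n-1}}$ of $A_V^{\GL}$ intertwining the traces in the sense that $(\mathrm{Tr}_V)_\bullet(\partial(b)) = \psi((\mathrm{Tr}_V)_\bullet(\bar b))$ for every $\bar b\in A_\natural$. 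Both sides of
$$(\mathrm{Tr}_V)_\bullet\bigl(l_n(\bar a_1,\ldots,\bar a_{n-1},\beta)\bigr) \,=\, \psi\bigl((\mathrm{Tr}_V)_\bullet(\beta)\bigr)$$
are graded derivations in $\beta$ as maps $\bSym(A_\natural)\to A_V^{\GL}$ (the left by Proposition~\ref{pG.2.2.1} and multiplicativity of $(\mathrm{Tr}_V)_\bullet$, the right since $\psi$ is a derivation), and they agree on generators $\beta\in A_\natural$. Hence they agree on all of $\bSym(A_\natural)$. Specialising to $(\mathrm{Tr}_V)_\bullet(\beta) = 0$ yields the lemma.

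The main obstacle I anticipate is the sign bookkeeping in the first reduction step: moving an interior $\alpha_i$ into the last slot via graded antisymmetry of $l_n$, and then applying Proposition~\ref{pG.2.2.1}'s Leibniz rule, introduces Koszul signs that must be compatible with those already governing the $P_\infty$-structure on $\bSym(A_\natural)$. What rescues the argument is Proposition~\ref{pG.2.2.2}: the extended operations are the \emph{unique} $P_\infty$-extension of $l_n$ from $A_\natural$ and are therefore automatically graded (anti)symmetric and polyderivative in every argument.
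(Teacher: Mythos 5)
Your proposal is correct and follows essentially the same route as the paper: reduce the outer arguments to $A_{\natural}$ via the Leibniz expansion, graded antisymmetry and multiplicativity of $(\mathrm{Tr}_V)_{\bullet}$, then represent $l_n(\bar a_1,\ldots,\bar a_{n-1},-)$ by a derivation $\partial_{a_1,\ldots,a_{n-1}}$ of $A$, lift it to a derivation $\psi$ of $A_V^{\GL}$ via Lemma~\ref{pG.2.0}, and extend the identity from $A_{\natural}$ to all of $\bSym(A_{\natural})$ by comparing two derivations that agree on generators. No substantive differences from the paper's argument.
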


\begin{proof}
 Since
$$l_n(\alpha_1.\alpha_1',\alpha_2,..,\alpha_{n-1},\beta)= \pm \alpha_1.l_n(\alpha_1',...,\alpha_{n-1},\beta) \pm
\alpha_1'.l_n(\alpha_1,...,\alpha_{n-1},\beta)\,,$$
since $(\mathrm{Tr}_V)_{\bullet}\,:\,\bSym(A_{\natural}) \rar A_V^{\GL}$ is a ring homomorphism, and because the (anti)-symmetry of $l_n$,
an inductive argument reduces the verification the required lemma to
the case when $\alpha_1,...,\alpha_{n-1}\,\in\, A_{\natural}$. Let $\partial_{\alpha_1,...,
\alpha_{n-1}}$ be any derivation of $A$ whose image in $\der(A)_{\natural}=i(\alpha_1 \wedge...\wedge \alpha_{n-1})$. Then,
by Lemma~\ref{pG.2.0}, there is a (graded) derivation
$\psi_{\alpha_1,...,
\alpha_{n-1}}$ of $A_V^{\GL}$ such that
$$ (\mathrm{Tr}_V)_{\bullet}(\partial_{\alpha_1,...,
\alpha_{n-1}}(\tilde{\beta'}) \,=\, \psi_{\alpha_1,...,
\alpha_{n-1}}((\mathrm{Tr}_V)_{\bullet}(\beta'))$$ for any $\beta'\,\in\, A_{\sharp}$ and for any lift $\tilde{\beta'}$ of $\beta$ to $A$.
Hence,
$$  (\mathrm{Tr}_V)_{\bullet}(l_n(\alpha_1,...,\alpha_{n-1},\beta'))\,=\, \psi_{\alpha_1,...,
\alpha_{n-1}}((\mathrm{Tr}_V)_{\bullet}(\beta'))$$
for any $\beta' \in A_{\sharp}$. It follows that
$$ (\mathrm{Tr}_V)_{\bullet}(l_n(\alpha_1,...,\alpha_{n-1},\beta))\,=\, \psi_{\alpha_1,...,
\alpha_{n-1}}((\mathrm{Tr}_V)_{\bullet}(\beta))$$ for any $\beta \,\in\, \bSym(A_{\natural})$. The right hand side of the above equation
is clearly $0$ if $(\mathrm{Tr}_V)_{\bullet}(\beta)=0$.

\end{proof}

By Lemma~\ref{pG.2.3.1} and Theorem~\ref{t3s1}, the operations $\bar{l}_n:\wedge^n(A_V^{\GL}) \rar A_V^{\GL}$ defined by
$$ \bar{l}_n( \alpha_1,..,\alpha_n) \,=\, (\mathrm{Tr}_V)_{\bullet}(l_n((\mathrm{Tr}_V)_{\bullet}^{-1}(\alpha_1),...,(\mathrm{Tr}_V)_{\bullet}^{-1}
(\alpha_n)))$$
 are well defined. Further, Theorem~\ref{t3s1} and the fact that the $l_n$ impose a $P_{\infty}$ structure on
$\bSym(A_{\natural})$ together imply that the $\bar{l}_n$ impose a $P_{\infty}$-structure on $A_V^{\GL}$.
Let $\{f_1,\bar{f}_2,...,\}\,:\, A \rar B$ be a homomorphism of NC $P_{\infty}$-algebras.

\begin{lemma} \la{pG.2.3.2}
As in Proposition~\ref{pG.2.2.2}, extend the $\bar{f}_n$ to maps $f_n:\wedge^n(\bSym(A_{\natural})) \rar \bSym(B_{\natural})$.
Then, for any $\beta \in \bSym(A_{\natural})$ such that $(\mathrm{Tr}_V)_{\bullet}(\beta)=0$,
$$ (\mathrm{Tr}_V)_{\bullet}(f_n(\alpha_1,...,\alpha_{n-1},\beta)) = 0$$
for all $\alpha_1,..,\alpha_{n-1} \,\in\, \bSym(A_{\natural})$.

\end{lemma}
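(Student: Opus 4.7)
The plan is to adapt the argument of Lemma~\ref{pG.2.3.1} to the case of a $P_\infty$-morphism, replacing the single lifting derivation there by the full polyderivation supplied by the definition of a NC $P_\infty$-morphism. First I would reduce the statement to the case $\alpha_1,\ldots,\alpha_{n-1},\beta\in A_\n$. For this, fix $\alpha_1,\ldots,\alpha_{n-1}\in\bSym(A_\n)$ and recall from (the discussion preceding) Proposition~\ref{pG.2.2.2} that the family $\{f_S(\alpha_1,\ldots,\alpha_{n-1},-)\}_{S\subset\{1,\ldots,n-1\}}$ is a polyderivation on $\bSym(A_\n)$ with respect to $f_1:\bSym(A_\n)\rar\bSym(B_\n)$. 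Expanding $f_n(\alpha_1,\ldots,\alpha_{n-1},\beta\cdot\beta')$ by the polyderivation rule produces a sum of products of values of lower $f_T$ on $\beta$ and $\beta'$; since $(\mathrm{Tr}_V)_\bullet$ is a ring homomorphism and the $f_T$ for $|T|<n$ are already controlled by descending induction on $n$, this reduces the claim in the $\beta$-slot to the case $\beta\in A_\n$. The graded antisymmetry of $f_n$ then lets one treat each $\alpha_i$-slot in exactly the same way, reducing everything to $\alpha_i\in A_\n$.

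Next I would handle the base case $\alpha_i=\bar a_i$ with $a_i\in A$ and $\beta=\bar b$ with $b\in A$. By the definition of a NC $P_\infty$-morphism, $\bar f_n(\bar a_1,\ldots,\bar a_{n-1},-)$ is induced by an operator $\partial^f_{a_1,\ldots,a_{n-1}}:A\rar B$ such that the full collection $\{\partial^f_{\textbf{a}_S}\}_{S\subset\{1,\ldots,n-1\}}$ is an $f_1$-polyderivation. Applying Lemma~\ref{pG.2.0} produces an $(f_1)_V$-polyderivation $\{(\partial^f_{\textbf{a}_S})_V\}$ that restricts to one on the $\GL$-invariants. Chasing the definitions of $(\mathrm{Tr}_V)_\bullet$ and of the $V$-lift of a derivation exactly as in the proof of Lemma~\ref{pG.1.4.1}, one obtains a (graded) operator $\psi^f_{a_1,\ldots,a_{n-1}}:=(\partial^f_{a_1,\ldots,a_{n-1}})_V$ on $A_V^{\GL}\rar B_V^{\GL}$ satisfying
\[
(\mathrm{Tr}_V)_\bullet\bigl(\bar f_n(\bar a_1,\ldots,\bar a_{n-1},\bar b)\bigr)
\,=\,\psi^f_{a_1,\ldots,a_{n-1}}\bigl((\mathrm{Tr}_V)_\bullet(\bar b)\bigr).
\]

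Finally, both sides of this identity, viewed as functions of $\beta$, are $\bigl((\mathrm{Tr}_V)_\bullet\circ f_1\bigr)=\bigl((f_1)_V\circ(\mathrm{Tr}_V)_\bullet\bigr)$-polyderivations (with the other slots $\alpha_1,\ldots,\alpha_{n-1}$ held fixed and $\beta$ allowed to vary), so by the polyderivation version of the usual extension argument the identity propagates from $A_\n$ to all of $\bSym(A_\n)$. Combined with Step~1, this gives
\[
(\mathrm{Tr}_V)_\bullet\bigl(f_n(\alpha_1,\ldots,\alpha_{n-1},\beta)\bigr)
\,=\,\Psi\bigl((\mathrm{Tr}_V)_\bullet(\beta)\bigr)
\]
for some operator $\Psi$ on $A_V^{\GL}\rar B_V^{\GL}$ (polynomial in the $\psi^f_\bullet$), whose right-hand side vanishes whenever $(\mathrm{Tr}_V)_\bullet(\beta)=0$. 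The main obstacle is Step~2: one must verify that the trace-intertwining already proved in Lemma~\ref{pG.1.4.1} for a single derivation ($n=2$) extends coherently to the multi-derivation setting coming from Lemma~\ref{pG.2.0}, with all signs and symmetries matching those of the extended $f_n$ from Proposition~\ref{pG.2.2.2}; once this compatibility is in place, the reductions in Steps~1 and~3 follow from Theorem~\ref{t3s1} and routine induction.
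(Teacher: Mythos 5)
Your proposal is correct and follows essentially the same route as the paper's proof: reduce to the case of generators in $A_{\natural}$ via the polyderivation (Leibniz) expansion, antisymmetry of $f_n$, and induction on $n$ and polynomial degree; treat the base case using the defining $f_1$-polyderivation $\{\partial^f_{\textbf{a}_S}\}$ together with Lemma~\ref{pG.2.0}; and propagate the resulting intertwining identity from $A_{\natural}$ to all of $\bSym(A_{\natural})$ by comparing two polyderivations of the same multidegree over $(\mathrm{Tr}_V)_{\bullet}\circ\bSym((f_1)_{\natural})$. The compatibility you flag as the ``main obstacle'' is exactly what Lemma~\ref{pG.2.0} (via \cite{BKR}, Lemma~5.5) supplies, so no gap remains.
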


\begin{proof}
 For $n=1$, the required lemma is clear: indeed, $(\mathrm{Tr}_V)_{\bullet}$ is a natural transformation between the functors
$A \mapsto \bSym(A_{\natural})$ and $A \mapsto A_V^{\GL}$. Put $\alpha_n:=\beta$. Note that $\bSym(A_{\natural})$ has a polynomial
grading where elements of $A_{\natural}$ may be viewed as the elements of polynomial degree $1$. Put $J_n:=\{2,3,...,n\}$.
For $I:=\{\alpha_{i_1}<...<\alpha_{i_{|I|}}\}\, \subset\, J_n$ let
 $f_I(\alpha):=f_{|I|+1}(\alpha,\alpha_{i_1},..,\alpha_{i_{|I|}})$.
Since
$$ f_n(\alpha_1.\alpha_1',\alpha_2,..,\alpha_n) \,=\, \sum_{I \subset J_n}  \pm f_{I}(\alpha).f_{J \setminus I}(\alpha')\,,$$
an induction on $n$ as well as the polynomial degree of $\alpha_1$, together with the (anti)-symmetry of $f_n$ reduces the verification of
the required lemma to the case when $\alpha_1,..,\alpha_{n-1} \,\in\, A_{\natural}$. In this case, let $a_i$ be a lift of
$\alpha_i$ for $1 \leq i \leq n-1$. Consider the polyderivation $\{\partial^f_{\textbf{a}_S}\}_{S \subset \{1,..,n-1\}}$ from
Section~\ref{G.2.1}. By Lemma~\ref{pG.2.0}, there is a polyderivation $\{\psi^f_{\textbf{a}_S}:A_V^{\GL} \rar B_V^{\GL}\}_{S \subset \{1,..,n-1\}}$
 satisfying
$$ (\mathrm{Tr}_V)_{\bullet}(\partial^f_{\textbf{a}_S}(b)) \,=\, \psi^f_{\textbf{a}_S}((\mathrm{Tr}_V)_{\bullet}(b)) $$
for any $b \in A$, $S \subset \{1,...,n-1\}$. Hence,
$$ (\mathrm{Tr}_V)_{\bullet}(f_{|S|+1}(\alpha_{i_1},..,\alpha_{i_{|S|}},\beta'))\,=\, \psi^f_{\textbf{a}_S}((\mathrm{Tr}_V)_{\bullet}(\beta'))$$
for all $\beta' \in A_{\natural}$ and $S:=\{i_1<...<i_{|S|}\} \subset \{1,...,n-1\}$. Since the operators \\
$\{(\mathrm{Tr}_V)_{\bullet}(f_{|S|+1}(\alpha_{i_1},..,\alpha_{i_{|S|}},-))\}_S$ as well as $\{\psi^f_{\textbf{a}_S}((\mathrm{Tr}_V)_{\bullet}(-))\}_S$
constitute polyderivations of the same multidegree with respect to $(\mathrm{Tr}_V)_{\bullet} \circ \bSym((f_1)_{\natural})$, it follows that
$$ (\mathrm{Tr}_V)_{\bullet}(f_{|S|+1}(\alpha_{i_1},..,\alpha_{i_{|S|}},\beta))\,=\, \psi^f_{\textbf{a}_S}((\mathrm{Tr}_V)_{\bullet}(\beta))$$
for all $\beta \in \bSym(A_{\natural})$. Since the right hand side of the above equation vanishes when $(\mathrm{Tr}_V)_{\bullet}(\beta)=0$,
the required lemma follows.
\end{proof}

Lemma~\ref{pG.2.3.2} and Theorem~\ref{t3s1} imply that the maps
$$\wedge^n(A_V^{\GL}) \rar B_V^{\GL},\,\,\,\,\, (\alpha_1,..,\alpha_n) \mapsto  (\mathrm{Tr}_V)_{\bullet}(f_n( (\mathrm{Tr}_V)_{\bullet}^{-1}(\alpha_1),...,
 (\mathrm{Tr}_V)_{\bullet}^{-1}(\alpha_n))) $$
are well defined.  Theorem~\ref{t3s1} together with Proposition~\ref{pG.2.2.2} further implies that they
constitute a $P_{\infty}$-morphism $f_V^{\GL}$ from $A_V^{\GL}$ to $B_V^{\GL}$ (and that this construction is preserves compositions of morphisms).
This proves Theorem~\ref{NCGinfRep} (i). A trivial modification of the same argument shows that if $h:A \rar B \otimes \Omega$ is a
P-homotopy between morphisms $f,g:A \rar B \otimes \Omega$ of NC $P_{\infty}$-algebras, then the above construction
applied to $h$ gives a morphism $h_V^{\GL}:A_V^{\GL} \rar B_V^{\GL}$ that is a homotopy between the morphisms $f_V^{\GL}$ and $g_V^{\GL}$
of $P_{\infty}$-algebras. This proves Theorem~\ref{NCGinfRep} (ii).

\subsection{Remark}

Another interesting question would be to relate $\Ho^*(\mathtt{NCP}_{\infty})$ to $\Ho^*(\mathtt{NCPoiss})$. In this context, one
may recall a theorem due to Munkholm (\cite{M}), which states that $\Ho(\DGA_k)$ is equivalent to the full subcategory
of $\Ho^*(A_{\infty})$ whose objects are objects of $\DGA_k$. Here, $\Ho^*(A_{\infty})$ is the category whose objects are $A_{\infty}$-algebras
 and whose morphisms are homotopy classes of $A_{\infty}$-morphisms. Given this, one may ask whether $\Ho^*(\mathtt{NCPoiss})$
is equivalent to the full subcategory of $\Ho^*(\mathtt{NCP}_{\infty})$ whose objects are objects of $\mathtt{NCPoiss}$. If the answer
to this question is negative, what is the precise relation between these two categories ?

\section{NC Poisson algebras from Calabi-Yau algebras} \la{s3}

This section exhibits a large family of NC $n$-Poisson algebras: more generally, we show that if $A$ is a finite dimensional graded $n$-cyclic algebra, the cobar construction applied to the linear dual $C:=\Hom_k(A,k)$ of $A$ is equipped with a $(2-n)$-Poisson double bracket. This Poisson double bracket induces a noncommutative $(2-n)$-Poisson structure. Of course, the cobar construction of $C$ is cofibrant (in fact, free) as a DG algebra. In particular, a finite dimensional $2$-cyclic algebra (for example, the cohomology of a compact smooth $2$-manifold) gives rise to a cofibrant DG algebra with a noncommutative Poisson structure. We point out that a $n$-cyclic algebra is a special case of a $n$-Calabi-Yau $A_{\infty}$-algebra in the sense of Kontsevich (see~\cite{Cos}, Section 7.2).

In this section, we shall often use Sweedler's notation and write
$$ \Delta(\alpha)\,=\, \alpha' \otimes \alpha''$$
for any element $\alpha$ in a coalgebra $C$ with coproduct $\Delta$.

\subsection{Double Poisson (DG) algebras}

Let $A$ be an associative DG algebra over a field $k$.
An $A$-bimodule $M$ is a left $A\otimes A^{\mathrm{op}}$-module.
On $A\otimes A$, there are two $A$-bimodule structures:
one is the outer $A$-bimodule, namely
$$a\cdot(u\otimes v)\cdot b=au\otimes vb;$$
the other one is the inner $A$-bimodule, namely
$$a\cdot (u\otimes v)\cdot b= {(-1)}^{|a||u|+|a||b|+|b||v|}ub\otimes av\,\text{.}$$
Here, $a,b,u,v$ are arbitrary homogenous elements of $A$.

Suppose that $A$ is an associative (unital) DG algebra over a field $k$.
A {\it double bracket} of degree $n$ on $A$ is a bilinear map
$$\ldb-,-\rdb:A\otimes A\to A\otimes A$$
which is a derivation of degree $n$ (for the outer $A$-bimodule structure on $A\otimes A$) in its second argument
and satisfies
$$\ldb a,b\rdb=-{(-1)}^{(|a|+n)(|b|+n)}\ldb b,a\rdb^\circ,$$
where $(u\otimes v)^\circ ={(-1)}^{|u||v|}v\otimes u$.
For $a,b_1,...,b_n$ homogeneous in $A$, let
$$\ldb a, b_1 \otimes \ldots \otimes b_n\rdb_L \,:=\, \ldb a,b_1 \rdb \otimes b_2 \otimes \ldots \otimes b_n \,\text{.}$$
Further, for a permutation $s \in S_n$, let
$$\sigma_s(b_1 \otimes \ldots \otimes b_n)\,:=\, {(-1)}^t b_{s^{-1}(1)} \otimes \ldots \otimes b_{s^{-1}(n)}\,$$
where
$$ t:=\sum_{i<j; s^{-1}(i)>s^{-1}(j)} |a_{s^{-1}(i)}||a_{s^{-1}(j)}| \,\text{.}$$

Suppose that $\ldb\mbox{--},\mbox{--}\rdb$ is a double bracket of degree $n$ on $A$. If furthermore $A$ satisfies the following {\it double Jacobi identity}
$$ \ldbg a , \ldb b,c \rdb \rdbg_L + {(-1)}^{(|a|+n)(|b|+|c|)}\sigma_{(123)}\ldbg b,\ldb c,a\rdb \rdbg_L + {(-1)}^{(|c|+n)(|a|+|b|)} \sigma_{(132)}
\ldbg c,\ldb a,b\rdb \rdbg_L =0\,,$$
then $A$ is called a {\it double $n$-Poisson algebra}.

Let $\mu\,:\,A \otimes A \rar A$ denote the multiplication on $A$. Let $\{\mbox{--},\mbox{--}\}\,:=\, \mu \circ \ldb\mbox{--},\mbox{--}\rdb\,:\,A\otimes A \rar A$. The following is a direct generalization (to the DG setting) of Lemma 2.4.1 of~\cite{vdB}.

\begin{lemma}\la{l1s3}

$\{\mbox{--},\mbox{--}\}\,$ induces a noncommutative $n$-Poisson structure on $A$. In particular, when $n=1$, $\{\mbox{--},\mbox{--}\}\,$ induces a noncommutative Gerstenhaber structure on $A$.
\end{lemma}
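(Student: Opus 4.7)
The plan is to adapt the proof of Lemma~2.4.1 of~\cite{vdB} to the graded DG setting, carefully tracking Koszul signs throughout. Write $\{a,b\} := \mu\,\ldb a,b\rdb$ for the bracket on $A$ obtained by composing the double bracket with the product.

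First I would record two structural properties of $\{-,-\}$. (Derivation property.) Because $\ldb a,\mbox{--}\rdb : A \to A\otimes A$ is a derivation of degree $|a|+n$ into the outer $A$-bimodule, and $\mu$ is a morphism of $A$-bimodules from the outer bimodule $A \otimes A$ to $A$, the composite $\{a,\mbox{--}\} = \mu\,\ldb a,\mbox{--}\rdb$ is a graded derivation of $A$ of degree $|a|+n$. In particular $\{a,\mbox{--}\} \in \DER(A)$ and $\{a,[A,A]\} \subset [A,A]$. (Antisymmetry modulo commutators.) Combining the identity $\mu(X) - (-1)^{|u||v|}\mu(X^\circ) = [u,v]$ for $X = u\otimes v$ with the skew-symmetry $\ldb a,b\rdb = -(-1)^{(|a|+n)(|b|+n)}\ldb b,a\rdb^\circ$ gives
\[
\{a,b\} + (-1)^{(|a|+n)(|b|+n)}\{b,a\} \;\in\; [A,A].
\]
Together, these two properties imply that $\{-,-\}$ descends to a graded skew-symmetric bilinear map $\{-,-\}_\natural : A_\natural \otimes A_\natural \to A_\natural$ of degree $n$.

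The substantive work is the graded Jacobi identity on $A_\natural$. I would apply the iterated multiplication $\mu^{(3)} : A^{\otimes 3} \to A$, followed by the projection $\pi : A \to A_\natural$, to the double Jacobi identity. By cyclicity of the trace, $\pi\mu^{(3)}\sigma_{(123)} = \pi\mu^{(3)} = \pi\mu^{(3)}\sigma_{(132)}$, so each of the three terms of the double Jacobi is sent to an expression of the form $\pi\mu^{(3)}\,\ldb\,\cdot\,,\ldb\,\cdot\,,\,\cdot\,\rdb\rdb_L$. The remaining step is to verify
\[
\pi\mu^{(3)}\,\ldb a,\ldb b,c\rdb\rdb_L \;\equiv\; \pi\{a,\{b,c\}\} \pmod{[A,A]},
\]
which follows from the derivation property $\{a,xy\} = \{a,x\}y \pm x\{a,y\}$ applied to $\{b,c\}$, together with cyclicity in $A_\natural$ used to reorder the extra Leibniz term, and the skew-symmetry of $\ldb-,-\rdb$ used to fold that reordered term back into the Sweedler expansion of the first. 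Substituting into the projected double Jacobi then yields the super-Jacobi identity of degree $n$ for $\{-,-\}_\natural$.

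Finally, to interpret the resulting structure as a NC $n$-Poisson structure in the sense of Section~\ref{G.1.1}, I define $i : A_\natural \to \DER(A)_\natural$ by $\bar a \mapsto \overline{\{a,\mbox{--}\}}$; this is well defined by the derivation property and antisymmetry-mod-commutators established above, and the previous step shows it is a morphism of (shifted) DG Lie algebras. The factorization $\mathrm{ad} = \varrho \circ i$ holds by construction of $\{-,-\}_\natural$, so we obtain a NC $n$-Poisson structure; for $n=1$ the bracket on $A_\natural$ has degree $1$, i.e.\ a noncommutative Gerstenhaber structure. The main obstacle is the Koszul sign bookkeeping in the Jacobi step: in the ungraded case of~\cite{vdB} the manipulation is purely symbolic, whereas here one must track precisely the signs coming from the permutations $\sigma_{(123)}, \sigma_{(132)}$, the twist $(\,\cdot\,)^\circ$, and the reorderings in $A_\natural$, and verify that they conspire to give the graded Jacobi identity with the sign dictated by degree $n$.
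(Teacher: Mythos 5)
Your approach is the right one and is essentially what the paper intends: the paper offers no written proof of Lemma~\ref{l1s3}, simply declaring it a direct generalization of Lemma~2.4.1 of~\cite{vdB}, and your argument is Van den Bergh's, transported to the graded setting (derivation property of $\{a,\mbox{--}\}$, antisymmetry modulo $[A,A]$, Jacobi via $\pi\mu^{(3)}$ applied to the double Jacobiator, and the factorization $\mathrm{ad}=\varrho\circ i$). One displayed claim, however, is false as stated: the congruence $\pi\mu^{(3)}\ldbg a,\ldb b,c\rdb\rdbg_L \equiv \pi\{a,\{b,c\}\}$ cannot hold, because the Leibniz rule applied to $\{a,\ldb b,c\rdb'\ldb b,c\rdb''\}$ produces \emph{two} terms: the first is $\mu^{(3)}\ldbg a,\ldb b,c\rdb\rdbg_L$, while the second, after a cyclic rotation in $A_\natural$ and one application of skew-symmetry of $\ldb\mbox{--},\mbox{--}\rdb$, becomes $\pm\,\mu^{(3)}\ldbg a,\ldb c,b\rdb\rdbg_L$ --- it does not fold back into the first. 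The repair is standard: the graded Jacobiator of $\{\mbox{--},\mbox{--}\}_\natural$ is congruent to a sum of six terms that organize into $\pi\mu^{(3)}$ applied to two copies of the double Jacobiator (one for the argument order $(a,b,c)$ and one for $(a,c,b)$), each of which vanishes by hypothesis since $\pi\mu^{(3)}$ is invariant under the cyclic permutations $\sigma_{(123)},\sigma_{(132)}$. With that correction --- and noting that one must also use compatibility of $\ldb\mbox{--},\mbox{--}\rdb$ with the differential (implicit in the DG setting, and verified for the bracket \eqref{def_bb} in Step~3 of the proof of Theorem~\ref{t1s3}) so that the induced bracket is a map of complexes --- your proof is complete and coincides with the intended one.
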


\noindent
{\bf Notation.} Using Sweedler's notation, we will often write
$$\ldb u,v\rdb\,=\, \ldb u,v\rdb'\otimes \ldb u,v\rdb'' \,\text{.}$$

\subsubsection{}
\textbf{Remark.} One thus, has the category $n-\mathtt{DPoiss}$ of DG double $n$-Poisson algebras.
We sketch how the analog of Theorem~\ref{NCPoiss} holds for double $n$-Poisson algebras. The details in this subsubsection are left to the interested reader. Let $A$ be a DG $n$-double Poisson algebra.
Let $\Omega$ denote the de Rham algebra of the affine line, as in Section~\ref{G}.
Then, it is verified without difficulty
that $A \otimes \Omega$ has a $\Omega$-linear DG double $n$-Poisson structure. Indeed, after identifying
$(A \otimes \Omega) \otimes_{\Omega} (A \otimes \Omega)$ with $A \otimes A \otimes \Omega$,
the ($\Omega$-linear) double bracket on $A \otimes \Omega$ is given by $\ldb \mbox{--},\mbox{--} \rdb \otimes \id_{\Omega}$.

One may therefore, define the ``homotopy category" $\Ho^*(n-\mathtt{DPoiss})$ of $n-\mathtt{DPoiss}$:
objects in $\Ho^*(n-\mathtt{DPoiss})$ are objects in $n-\mathtt{Poiss}$ that are cofibrant in $\DGA_k$.
Morphisms in $\Ho^*(n-\mathtt{DPoiss})$ are homotopy classes of morphisms in $n-\mathtt{Poiss}$.
Here, $f,g\,:\,A \rar B$ in $n-\mathtt{Poiss}$ are {\it homotopic} if there
exists $h\,:\,A \rar B\otimes \Omega$ in $n-\mathtt{Poiss}$ such that $h(0)=f$ and $h(1)=g$. Here, when we say that
$h\,:\,A \rar B \otimes \Omega$ is in $n-\mathtt{Poiss}$, we mean that
$$\ldb \mbox{--},\mbox{--} \rdb \circ (h \otimes_{\Omega} h) \,=\, (h \otimes_{\Omega} h) \circ \ldb \mbox{--},\mbox{--} \rdb \,\text{.} $$

If $A$ is a double $n$-Poisson algebra,
a direct extension of the proofs of Propositions 7.5.1 and 7.5.2 of~\cite{vdB} shows that there exists a
DG $n$-Poisson structure on $A_V$ (which restricts to the one induced on $A_V^{\GL}$ by the corresponding
noncommutative $n$-Poisson structure on $A$). One further shows without much difficulty that if $A,B$ are
double $n$-Poisson algebras and if $h\,:\,A \rar B\otimes \Omega$ is a morphism of double $n$-Poisson algebras,
then $h_V\,:\,A_V \rar B_V \otimes \Omega$ is a morphism of DG $n$-Poisson algebras. Thus, the analog of
Theorem~\ref{NCPoiss} (with $(\mbox{--})_V$ and $\L(\mbox{--})_V$ replacing
$(\mbox{--})_V^{\GL}$ and $\L(\mbox{--})_V^{\GL}$) holds for double $n$-Poisson algebras.

\subsection{Cyclic graded algebras and double Poisson brackets}
In this subsection, we will avoid specifying exact signs that are determined by the Koszul rule. Instead, such signs will be denoted by the symbol $\pm$. This is done in order to simplify cumbersome formulas, especially in the proof of Theorem~\ref{t1s3}.

\subsubsection{}
Let $A$ be a finite dimensional (graded) associative algebra with a symmetric inner product of degree $n$ such that
\begin{equation}\label{cyclicpairing}
\langle a,bc\rangle=\pm \langle ca,b\rangle,\quad\mbox{for any}\; a,b,c\in A.
\end{equation}
According to Kontsevich (\cite{Ko}) and Getzler-Kapranov (\cite{GeKa}),
such an algebra is called a $n$-{\it cyclic} associative algebra.
In addition, if $A$ is finite dimensional, the dual space $C:=\mathrm{Hom}(A,k)$
is a coalgebra equipped with a symmetric bilinear pairing of degree $-n$. By the non degeneracy of the inner product on $A$,
Equation (\ref{cyclicpairing}) is dual to the following identity:
\begin{equation}\label{coprod}
\langle v',w\rangle \cdot v''\,=\, \pm \langle v,w''\rangle\cdot w',
\quad\mbox{for any}\; v,w\in C.
\end{equation}
Hence, $C$ acquires the structure of {\it cyclic} $(-n)$-coalgebra. More generally, a DG coalgebra
$C$ equipped with a symmetric bilinear pairing $\langle\mbox{--},\mbox{--}\rangle$ of degree $n$ is called
{\it cyclic} if in addition to~\eqref{coprod},
\begin{equation}
\la{idif} \langle du,v \rangle \pm \langle u, dv \rangle = 0
\end{equation}
for all $u,v \in C$.

\subsubsection{Constructing the $(n+2)$-double Poisson bracket}

Let $C$ be a $n$-cyclic coassociative coalgebra and let $\B(C)$ denote
the cobar construction of $C$. Define  $\,\ldb-,-\rdb:\B(C)\otimes \B(C)\to\B(C)\otimes \B(C)\,$
by
\begin{equation}
\label{def_bb}
\ldb v, w\rdb := \displaystyle
\sum_{i=1,\cdots,n\atop j=1,\cdots,m}\pm \langle v_i,w_j\rangle\cdot
(w_1,\cdots,w_{j-1},v_{i+1},\cdots,v_n)\,\otimes\,
(v_1,\cdots,v_{i-1},w_{j+1},\cdots,w_m)\, ,
\end{equation}
where $ v = (v_1,v_2,\cdots,v_n) $ and $ w = (w_1,w_2,\cdots,w_m) $. The next theorem
is the main result of this section.
\begin{theorem}
\label{t1s3}
Let $C$ be a $n$-cyclic coassociative DG coalgebra. The bracket \eqref{def_bb}
gives a double $(n+2)$-Poisson structure on the DG algebra $\B(C)$.
\end{theorem}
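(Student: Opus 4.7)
The strategy is to leverage the freeness of $\B(C) = T(C[-1])$ as a DG algebra and reduce the double Poisson axioms to statements on single generators $v, w \in C[-1]$, on which the formula~\eqref{def_bb} simplifies to $\ldb v, w\rdb = \pm \langle v, w\rangle \cdot 1 \otimes 1$. The degree count works out: the pairing on $C$ has degree $-n$ and the two generator shifts from $C$ to $C[-1]$ contribute $+2$, giving total degree $n+2$ for the bracket.

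First, I check by direct inspection of~\eqref{def_bb} that $\ldb v, -\rdb$ is a derivation of degree $n+2$ in the second argument for the outer bimodule structure: decomposing the second argument $w = w' \cdot w''$ as a concatenation of tensors, the sum over $j$ in~\eqref{def_bb} splits according to whether $j$ lies in $w'$ or $w''$, yielding the Leibniz rule with the correct Koszul signs. The antisymmetry $\ldb v, w\rdb = -(-1)^{(|v|+n+2)(|w|+n+2)}\ldb w, v\rdb^\circ$ is verified by swapping the roles of $v$ and $w$ in~\eqref{def_bb}, applying $\circ$, and invoking the graded symmetry of the pairing on $C$; the accumulated Koszul signs reassemble to exactly the sign prescribed for shifted degree $n+2$.

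For the double Jacobi identity I apply the standard extension principle for free algebras (see~\cite{vdB}, Section~2.4): the Jacobiator $J(a,b,c) := \ldbg a, \ldb b, c\rdb\rdbg_L + \mathrm{cyc}$ is a triple derivation in its three arguments once the bracket is a derivation in each, so its vanishing on $C[-1]^{\otimes 3}$ propagates to $\B(C)^{\otimes 3}$. On single generators $a, b, c \in C[-1]$, the inner bracket $\ldb b, c\rdb = \pm \langle b, c\rangle \cdot 1 \otimes 1$ is a scalar multiple of $1 \otimes 1$, and $\ldb a, 1\rdb = 0$ follows from applying Leibniz to $1 = 1 \cdot 1$; hence each term of the Jacobiator vanishes on generators and therefore on all of $\B(C)$.

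Finally, DG compatibility $d\ldb v, w\rdb = \ldb dv, w\rdb \pm \ldb v, dw\rdb$ splits along $d = d_C + d_\Delta$, the decomposition of the cobar differential into its internal part and its coproduct part. The $d_C$ contribution uses the cyclicity condition~\eqref{idif} to cancel the differential acting on $v_i$ against the differential acting on $w_j$ at the contracted spot. The $d_\Delta$ contribution is the combinatorial heart: when $d_\Delta$ lengthens $v$ by splitting some $v_k$ via $\Delta v_k = v_k' \otimes v_k''$, the key identity~\eqref{coprod}, $\langle v', w\rangle \cdot v'' = \pm\langle v, w''\rangle \cdot w'$, converts the new contraction involving $v_k'$ or $v_k''$ into a contraction involving one half of $\Delta w_j$, matching the corresponding $d_\Delta$ applied on the target. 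The principal obstacle throughout is rigorous tracking of Koszul signs; conceptually, the most delicate steps are the triple-derivation reduction for Jacobi and the pairing/coproduct interplay for DG compatibility.
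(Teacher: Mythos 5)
Your proof is correct, and it reaches the conclusion by a genuinely different route in its hardest step. The paper's proof has the same three-part skeleton (Leibniz in the second argument, antisymmetry plus double Jacobi, compatibility with the cobar differential), and your treatment of the first two items coincides with Steps 1--2 of the paper. For the double Jacobi identity, however, the paper grinds through the full computation on arbitrary tensors $u,v,w$: it writes out all six families of contractions and shows they cancel in pairs. You instead invoke Van den Bergh's observation that the Jacobiator of a double bracket is cyclically invariant and a derivation in its last argument, hence vanishes identically once it vanishes on generators of the free algebra $\B(C)=T(C[-1])$ --- where it vanishes for the trivial reason that $\ldb b,c\rdb$ is a multiple of $1\otimes 1$ and $\ldb a,1\rdb=0$. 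This is cleaner and less error-prone than the paper's six-sum cancellation, at the cost of needing the graded version of the triple-derivation/cyclic-invariance lemma (which is standard; the relevant statement is Proposition~2.3.1 of \cite{vdB} rather than Section~2.4). Conversely, for the differential compatibility the paper exploits the same reduction-to-generators trick you used for Jacobi: using the Leibniz rule it reduces $\partial\ldb u,v\rdb=\ldb\partial u,v\rdb\pm\ldb u,\partial v\rdb$ to the case $u,v\in C$, where the left side is zero and the right side splits into a term killed by~\eqref{idif} and a term killed by~\eqref{coprod}; your description of matching the $d_\Delta$-contractions on general tensors is correct but does more work than necessary, and you could apply your own generator-reduction there as well. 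One small slip: in your opening degree count, if the pairing on the coalgebra $C$ had degree $-n$ the bracket would have degree $2-n$; in the convention of the theorem an $n$-cyclic \emph{coalgebra} carries a pairing of degree $n$, whence the bracket has degree $n+2$ (the sign flip $n\mapsto -n$ occurs only when dualizing an $n$-cyclic algebra to its coalgebra). This does not affect the argument.
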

\begin{remark}
By construction, $\B(C)$ is a cofibrant DG algebra.
\end{remark}

\begin{proof}
 The proof consists of three steps.

{\bf Step 1.} First, recall that $\B(C)$ has a natural differential graded algebra structure,
with multiplication given by the tensor product. We show that $\ldb-,-\rdb$
is a derivation for the second argument.
For $\, u=(u_1,u_2,\cdots,u_p),\, v=(v_1,v_2,\cdots,v_q),\, w=(w_1,w_2,\cdots,w_r)\,$,
we have
\begin{eqnarray*}
\ldb u,v\cdot w\rdb
&=&\ldb (u_1,\cdots,u_p),(v_1,\cdots,v_q,w_1,\cdots,w_r)\rdb\\*[2ex]
&=&\sum_{i=1,\cdots,p\atop j=1,\cdots,q} \pm \langle u_i,v_j\rangle\cdot
(v_1,\cdots,v_{j-1},u_{i+1},\cdots, u_p)\otimes(u_1,\cdots,u_{i-1},v_{j+1},\cdots,v_q,w_1,\cdots,w_r)\\
&+&\sum_{i=1,\cdots,p\atop k=1,\cdots,r}\pm \langle u_i,w_k\rangle\cdot
(v_1,\cdots,v_q,w_1,\cdots,w_{k-1},u_{i+1},\cdots,u_p)\otimes(u_1,\cdots,u_{i-1},w_{k+1},\cdots,w_r).
\end{eqnarray*}
Hence
\begin{equation} \la{dder} \ldb u,v\cdot w\rdb \,=\,  \ldb u,v\rdb'\otimes\ldb u,v\rdb''\cdot w \pm v\cdot\ldb u,w\rdb'\otimes\ldb u,w\rdb''\,\text{.}
 \end{equation}

{\bf Step 2.} Next, we show that $\ldb-,-\rdb $ is skew symmetric and satisfies the double Jacobi
identity. The skew symmetricity follows directly from the definition \eqref{def_bb}
as the pairing on $C[-1]$ induced by $\langle\mbox{--},\mbox{--}\rangle$ is skew-symmetric.
Therefore, we only need to check the double Jacobi identity.
For $u=(u_1,u_2,\cdots,u_p),\, v=(v_1,v_2,\cdots,v_q),\, w=(w_1,w_2,\cdots,w_r)$, we have
\begin{eqnarray*}
\ldb u,v\rdb&=&\sum_{i,j}\pm \langle u_i,v_j\rangle\cdot
(v_1,\cdots,v_{j-1},u_{i+1},\cdots, u_p)\otimes(u_1,\cdots, u_{i-1},v_{j+1},\cdots,v_q)\ ,\\
\ldb v,w\rdb&=&\sum_{j,k}\pm \langle v_j,w_k\rangle \cdot
(w_1,\cdots,w_{k-1},v_{j+1},\cdots, v_q)\otimes(v_1,\cdots, v_{j-1},w_{k+1},\cdots,w_r)\ ,\\
\ldb w,u\rdb&=&\sum_{k,i}\pm \langle w_k,u_i\rangle \cdot
(u_1,\cdots,u_{i-1},w_{k+1},\cdots, w_r)\otimes(w_1,\cdots, w_{k-1},u_{i+1},\cdots,u_p)\ .
\end{eqnarray*}
Therefore
\begin{eqnarray}
&&\ldbg u,\ldb v,w\rdb'\rdbg\otimes\ldb v,w\rdb'' = \nonumber\\*[2ex]
&&\sum_{i,j,k\atop 1\le l\le k-1}\pm \langle v_j, w_k\rangle\langle u_i,w_l\rangle\cdot
(w_1,\cdots,w_{l-1},u_{i+1},\cdots,u_p)\nonumber\\
&&\quad\quad\quad\quad\otimes(u_1,\cdots,u_{i-1},w_{l+1},\cdots,w_{k-1},v_{j+1},\cdots,v_q)
\otimes(v_1,\cdots,v_{j-1},w_{k+1},\cdots,w_r)\label{sum_I} + \\*[2ex]
&&
\sum_{i,j,k\atop j+1\le m\le q}\pm \langle v_j,w_k\rangle\langle u_i,v_m\rangle\cdot
(w_1,\cdots,w_{k-1},v_{j+1},\cdots,v_{m-1},u_{i+1},\cdots,u_p)\nonumber\\
&&\quad\quad\quad\quad\otimes (u_1,\cdots, u_{i-1},v_{m+1},\cdots,v_q)
\otimes(v_1,\cdots,v_{j-1},w_{k+1},\cdots,w_r),
\label{sum_II}\\*[2ex]
&&\ldb w,u\rdb''\otimes\ldbg v,\ldb w,u\rdb'\rdbg = \nonumber\\
&&
\sum_{i,j,k\atop 1\le t\le i-1}\pm \langle w_k,u_i\rangle\langle v_j,u_t\rangle\cdot
(w_1,\cdots,w_{k-1},u_{i+1},\cdots,u_p)\nonumber\\
&&\quad\quad\quad\quad \otimes(u_1,\cdots,u_{t-1},v_{j+1},\cdots,v_q)\otimes
(v_1,\cdots, v_{j-1},u_{t+1},\cdots,u_{i-1},w_{k+1},\cdots,w_r)\label{sum_III} + \\*[2ex]
&&
\sum_{i,j,k\atop k+1\le s\le n}
\pm \langle w_k,u_i\rangle\langle v_j,w_s\rangle\cdot
(w_1,\cdots,w_{k-1},u_{i+1},\cdots,u_p)\nonumber\\
&&\quad\quad\quad\quad
\otimes(u_1,\cdots,u_{i-1},w_{k+1},\cdots,w_{s-1},v_{j+1},\cdots,v_q)\otimes
(v_1,\cdots,v_{j-1},w_{s+1},\cdots,w_n),\label{sum_IV}\\*[2ex]
&&\ldbg w,\ldb u,v\rdb'\rdbg''\otimes\ldb u,v\rdb''\otimes
\ldbg w,\ldb u,v\rdb'\rdbg' = \nonumber\\
&&
\sum_{i,j,k\atop 1\le n\le j-1}
\pm \langle u_i,v_j\rangle\langle w_k,v_n\rangle\cdot
(w_1,\cdots,w_{k-1},v_{n+1},\cdots,v_{j-1},u_{i+1},\dots,u_p)\nonumber\\
&&\quad\quad\quad\quad\otimes(u_1,\cdots,u_{i-1}, v_{j+1},\cdots,v_q)
\otimes (v_1,\cdots,v_{n-1},w_{k+1},\cdots,w_r)\label{sum_V}+\\*[2ex]
&&
\sum_{i,j,k\atop j+1\le m\le p}\pm \langle u_i,v_j\rangle\langle w_k, u_m\rangle\cdot
(w_1,\cdots,w_{k-1},u_{m+1},\cdots,u_p)\nonumber\\
&&\quad\quad\quad\quad
\otimes(u_1,\cdots,u_{i-1},v_{j+1},\cdots,v_q)\otimes(v_1,\cdots,v_{j-1},u_{i+1},\cdots,u_{m-1},w_{k+1},\cdots,
w_r)\label{sum_VI}
\end{eqnarray}
In the above equations, the summand (\ref{sum_I}) cancels with (\ref{sum_IV}),
(\ref{sum_II}) cancels with (\ref{sum_V}), and (\ref{sum_III}) cancels with (\ref{sum_VI}).
So we get
$$
\ldbg u,\ldb v,w\rdb'\rdbg\otimes\ldb v,w\rdb''\pm
\ldb w,u\rdb''\otimes\ldbg v,\ldb w,u\rdb'\rdbg \pm \ldbg w,\ldb u,v\rdb'\rdbg''\otimes
\ldb u,v\rdb''\otimes\ldbg w,\ldb u,v\rdb'\rdbg'=0,
$$
which proves the double Jacobi identity.

{\bf Step 3.} Using equation~\eqref{dder}, one verifies without difficulty that if
$\partial \ldb u,v \rdb \,=\, \ldb \partial u,v \rdb \pm \ldb u,\partial v \rdb$ and if
$\partial \ldb u,w \rdb \,=\, \ldb \partial u,w \rdb \pm \ldb u,\partial w \rdb$, then
$\partial \ldb u,vw \rdb \,=\, \ldb \partial u,vw \rdb \pm \ldb u,\partial(vw) \rdb$. By this fact and the skew-symmetry
of $\ldb\mbox{--},\mbox{--}\rdb$, it suffices to verify that
$\partial \ldb u,v \rdb \,=\, \ldb \partial u,v \rdb \pm \ldb u,\partial v \rdb$ for all $u,v \in C$. In this case,
$\partial \ldb u,v \rdb=0$. On the other hand, $\partial u= du \pm (u',u'')$ and $\partial v= dv \pm (v',v'')$.
Hence,
\begin{equation*}
\ldb \partial u,v \rdb \pm \ldb u,\partial v \rdb
= (\langle du, v \rangle \pm \langle u, dv \rangle) +
(\langle u', v\rangle u'' \pm \langle u, v''\rangle v' \pm \langle v, u''\rangle u' \pm \langle v', u\rangle v'')
\ ,
\end{equation*}
where the first parenthesis in the right hand side vanishes by~\eqref{idif} and the second by~\eqref{coprod}.
This proves that $\partial \ldb u,v \rdb \,=\, \ldb \partial u,v \rdb \pm \ldb u,\partial v \rdb$ for arbitrary
$u,v \in \B(C)$, completing the proof of the theorem.
\end{proof}

\subsubsection{} \textbf{Remark}. One say that two morphisms $f,g\,:\, C_1 \rar C_2$ of $n$-cyclic coalgebras are {\it homotopic}
if there exists a family $\phi_t\,:\, C_1 \rar C_2$ of morphisms of $n$-cyclic coalgebras varying polynomially with $t$
as well as degree $1$ coderivations $s_t$ with respect to $\phi_t$ such that
$$
\phi_0=f, \phi_1=g \text{ and } \frac{d\phi_t}{dt}\,=\, [d,s_t]\,\text{.}
$$
We further require that for all $u,v \in C_1$,
$$
\langle s_t(u),\phi_t(v)\rangle \pm \langle \phi_t(u),s_t(v)\rangle =0\,\text{.}\
$$
The above notion of homotopy is dual to the notion of a polynomial M-homotopy betwee two morphisms in $\DGA_k$
(see~\cite{BKR}, Proposition B.2 and subsequent remarks). Extending Theorem~\ref{t1s3}, one can further show that
if  $f,g\,:\, C_1 \rar C_2$  are homotopic as morphisms of $n$-cyclic coalgebras, $\B(f)$ is homotopic to $\B(g)$
as morphisms of $(n+2)$-double Poisson algebras. We leave the relevant details to the motivated reader.

\subsection{Cyclic homology of coalgebras}
We recall the definition of Hochschild and cyclic homology of coalgebras.
Given a coalgebra $C$ over $k$ consider the following double complex which is obtained
by reversing the arrows in the standard (Tsygan) double complex of an algebra:
$$
\xymatrixcolsep{4pc}
\xymatrix{
& & & & \\
C^{\otimes 3}\ar[u]_b\ar[r]^{1-T} &C^{\otimes 3}\ar[u]_{b'}\ar[r]^N &C^{\otimes 3}\ar[u]_b\ar[r]^{1-T}&\\
C^{\otimes 2}\ar[u]_b\ar[r]^{1-T} &C^{\otimes 2}\ar[u]_{b'}\ar[r]^N &C^{\otimes 2}\ar[u]_b\ar[r]^{1-T}&\\
C\ar[u]_b\ar[r]^{1-T} &C \ar[u]_{b'}\ar[r]^N &C \ar[u]_b\ar[r]^{1-T}&\\
0\ar[u]&0\ar[u]&0\ar[u]&
}
$$
This double complex is $2$-periodic in horizontal direction, with operators
$ b, b', T$ and $N$ given by 
\begin{eqnarray*}
b'(c_1,\cdots,c_n)&=&\sum_{i=1}^{n-1}(-1)^{i-1}(c_1,\cdots,c_i',c_{i+1}',\cdots,c_n)\ ,\\
b(c_1,\cdots,c_n)&=&b'(c_1,\cdots,c_n)+\sum (-1)^{n}(c_1'',c_2,\cdots,c_{n},c_1')\ ,\\
T(c_1,\cdots,c_n)&=&(-1)^{n-1}(c_2,\cdots,c_n,c_1)\ ,\\
N=\sum_{i=0}^{n-1}T^i\ .&&
\end{eqnarray*}
The $b$-column is called the Hochschild chain complex $\,\CH_{\bullet}(C,C)$ of $C\,$:
it defines the {\it Hochschild homology} $\,\HH_{\bullet}(C)$. The kernel of $1-T$ from
the $b$-complex to the $b'$-complex is called the cyclic complex $\CC_{\bullet}(C)$:
by definitiion, its homology is the {\it cyclic homology} $\HC_{\bullet}(C)$ of $C$.

\vspace{2ex}

\begin{remark}\label{relations_algebracoalgebra}
If $C$ is a coalgebra, then the dual complex $A:=\mathrm{Hom}(C,k)$ admits
an algebra structure.
If furthermore $C$ is finite dimensional, then the Hochschild complex $\CH_{\bullet}(C,C)$ (resp., cyclic complex $\CC_*(C)$)
is isomorphic to the Hochschild cochain complex $\CH^{\bullet}(A,k)$ (reps., cyclic cochain complex $\CC^{\bullet}(A)$).
Here the Hochschild cochain complex $\CH^{\bullet}(A,k)$ is the Hochschild cochain complex of $A$ {\it with values in} $k$.
Otherwise if $C$ is infinite dimensional, then the Hochschild complex
$\CH_{\bullet}(C,C)$ (reps. cyclic complex $\CC_{\bullet}(C)$)
is a sub complex of the Hochschild cochain complex $\CH^{\bullet}(A,k)$ (reps. cyclic cochain complex $\CC^{\bullet}(A)$).\\
\end{remark}

We collect some facts about the cyclic complex from Quillen \cite[\S1.3]{Q3}.
Let $A$ be an associative algebra. The commutator subspace of $A$ is $[A,A]$, which is the image
of $m-m\sigma: A\otimes A\to A$, where $m$ is the product and $\sigma$ is the switching
operator, and the commutator quotient space
is
$$A_\natural:=A/[A,A]=\mathrm{Coker}\{m-m\sigma:A\otimes A\to A\}.$$
Dually, suppose $C$ is a coassociative coalgebra,
the cocommutator sub space of $C$
is
$$C^\natural:=\mathrm{Ker}\{\Delta-\sigma\Delta:C\to C\otimes C\}.$$
Recall that the bar construction $\mathbf B(A)$ of $A$ (resp. cobar construction
$\mathbf \Omega(C)$ of $C$) is a differential graded (DG) coalgebra
(resp. DG algebra). The following lemma is~\cite[Lemma 1.2]{Q3}.

\begin{lemma}
The space $\mathbf B_n^\natural (A)$ is the kernel of $(1-T)$ acting
on $A^{\otimes n}$.
\end{lemma}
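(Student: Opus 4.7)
The plan is to analyze the deconcatenation coproduct on $\mathbf{B}(A)$ component by component and compare it against powers of the cyclic operator $T$. Recall that the (reduced part of the) coproduct on $\mathbf{B}_n(A) = A^{\otimes n}$ splits as $\bar\Delta = \sum_{i=1}^{n-1}\Delta_i$, where $\Delta_i : A^{\otimes n}\to A^{\otimes i}\otimes A^{\otimes(n-i)}$ is deconcatenation at position $i$; each $\Delta_i$ is a linear isomorphism onto its target. For the twist $\sigma$, the composite $\sigma\circ\Delta_{n-i}$ also lands in $A^{\otimes i}\otimes A^{\otimes(n-i)}$. Collecting by bidegree, the cocommutator equation $(\bar\Delta - \sigma\bar\Delta)(x) = 0$ becomes the system
\[
\Delta_i(x) \;=\; \sigma\,\Delta_{n-i}(x), \qquad i = 1, \ldots, n-1.
\]

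The crucial identity to establish is
\[
\sigma\circ\Delta_{n-i} \;=\; \Delta_i\circ T^{n-i} \qquad \text{as maps } A^{\otimes n}\to A^{\otimes i}\otimes A^{\otimes(n-i)}.
\]
On $x = (a_1,\ldots,a_n)$, both sides produce $(a_{n-i+1},\ldots,a_n)\otimes(a_1,\ldots,a_{n-i})$ with a sign. On the left, the sign comes from the Koszul swap of factors of total (bar) degrees $i$ and $n-i$. On the right, each of the $(n-i)$ applications of $T$ contributes a sign $(-1)^{n-1}$ (plus internal-degree contributions from moving individual $a_j$'s past one another). A short parity check in the case of internal degree zero compares $(-1)^{i(n-i)}$ with $(-1)^{(n-1)(n-i)}$ and verifies that these signs agree; the graded case follows by the same Koszul bookkeeping.

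Granted the identity, each equation $\Delta_i(x) = \sigma\,\Delta_{n-i}(x) = \Delta_i(T^{n-i}(x))$ is equivalent to $x = T^{n-i}(x)$, because $\Delta_i$ is injective. As $i$ runs through $\{1,\ldots,n-1\}$, the exponent $n-i$ runs through all nonzero residues modulo $n$, so the cocommutator condition amounts to $T^k(x) = x$ for every $k$. Since $T$ generates the cyclic group $\mathbb{Z}/n$, this system is already equivalent to the single equation $(1-T)x = 0$, giving $\mathbf{B}_n^\natural(A) = \ker(1-T)$ as claimed.

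The main obstacle is the sign bookkeeping in the identity $\sigma\circ\Delta_{n-i} = \Delta_i\circ T^{n-i}$: the Koszul sign produced by a single swap of blocks of shifted tensor factors must be shown to coincide with the accumulated sign from $(n-i)$ elementary cyclic rotations. Once this matching is verified, the remainder of the argument is purely formal. An alternative (and essentially equivalent) route is to dualize: the cocommutator subspace $\mathbf{B}_n^\natural(A)$ is dual to the degree-$n$ part of the commutator quotient of the tensor algebra on $A^*$, which is well known to be the cyclic coinvariant space, and then use that over a field of characteristic zero coinvariants coincide with invariants, which is precisely $\ker(1-T)$.
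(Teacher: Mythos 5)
Your proof is correct; the paper offers no argument of its own for this lemma (it simply cites Quillen's Lemma~1.2), and your computation---splitting the reduced deconcatenation coproduct by bidegree, identifying $\sigma\circ\Delta_{n-i}$ with $\Delta_i\circ T^{n-i}$, and using injectivity of each $\Delta_i$ so that the cocommutator condition becomes $T^{k}x=x$ for all $k$, hence $(1-T)x=0$---is essentially the standard one found there. Your sign check is also right: $i(n-i)-(n-1)(n-i)=-(n-i)(n-i-1)$ is a product of two consecutive integers, hence even.
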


Dually, the space $\mathbf (\Omega_\natural(C))_n$ is the cokernel of $(1-T)$ acting on $C^{\otimes n}$.
And therefore, via the isomorphisms
$$\CC_{\bullet}(A)=\mathrm{Coker}(1-T)\stackrel{\cong}{\to}
\mathrm{Ker}(1-T),\quad
\CC_{\bullet}(C)=\mathrm{Ker}(1-T)\stackrel{\cong}{\to}
\mathrm{Coker}(1-T),$$
one obtains the following lemma.

\begin{lemma}\la{l2s3}
As complexes of $k$-vector spaces,
$$\Omega(C)_{\natural} \,\cong\,\CC_{\bullet}(C)\,\text{.}$$
\end{lemma}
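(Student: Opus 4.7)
My plan is to assemble the isomorphism from three ingredients already present (explicitly or implicitly) in the text: the dual of Quillen's lemma identifying the commutator subspace of $\Omega(C)$, the norm-map identification $\mathrm{Coker}(1-T)\cong\mathrm{Ker}(1-T)$ available in characteristic zero, and a direct matching of differentials. First, I would work degree-wise. In degree $n$, the underlying vector space of $\Omega(C)$ is $C^{\otimes n}$ (up to the shift already built into the cobar construction), and the multiplication is concatenation of tensors. A commutator $[u,v]$ with $u\in C^{\otimes p}$ and $v\in C^{\otimes q}$ is, up to a Koszul sign, the difference between $(u_1,\ldots,u_p,v_1,\ldots,v_q)$ and a cyclic re-ordering. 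Thus the linear span of all commutators in degree $n$ is precisely the image of $1-T$ on $C^{\otimes n}$, giving
\[
(\Omega(C)_{\natural})_n \;=\; \mathrm{Coker}\bigl(1-T:C^{\otimes n}\to C^{\otimes n}\bigr).
\]
This is the direct dual of the lemma of Quillen quoted immediately before the statement, and the argument simply reverses his by replacing $A\otimes A\to A$ with the concatenation map $C^{\otimes p}\otimes C^{\otimes q}\to C^{\otimes(p+q)}$.

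Next, I would invoke the norm map $N=\sum_{i=0}^{n-1}T^i$ which, in characteristic zero, provides a canonical isomorphism $\mathrm{Coker}(1-T)\xrightarrow{\sim}\mathrm{Ker}(1-T)$; the right-hand side is by definition $\CC_n(C)$. Composed with the identification of the previous paragraph, this gives the vector space isomorphism $(\Omega(C)_{\natural})_n\cong\CC_n(C)$ in each degree.

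The remaining step, which I expect to be the real work, is to check that this bijection is a chain map. The cobar differential on $\Omega(C)$ has two parts: the internal differential inherited from $C$ (which commutes with $T$ and therefore descends without trouble), and the ``comultiplication'' part, which acts on a tensor $(c_1,\ldots,c_n)$ by splitting each entry via $\Delta$ at an \emph{internal} position, producing a summand of the shape $b'$. On the quotient by $1-T$, cyclic invariance lets one move the splitting at the last position to the front, producing exactly the extra term $\pm(c_1'',c_2,\ldots,c_n,c_1')$ that upgrades $b'$ to Connes' operator $b$. Transporting this through the norm isomorphism, the induced differential on $\mathrm{Ker}(1-T)$ is the restriction of $b$, i.e.\ the differential of $\CC_\bullet(C)$.

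The principal obstacle is bookkeeping: one must reconcile the degree shifts and Koszul signs of the cobar convention with the usual sign convention for $b$, $b'$, $T$, $N$ used in the Tsygan bicomplex drawn above. I would handle this by fixing once and for all the sign rule from the definition \eqref{def_bb} used earlier in the section (so that degree conventions match those of Theorem~\ref{t1s3}), and then checking the identity $Nb'=bN$ on $C^{\otimes n}$, which is exactly what is required for the norm map to intertwine the two induced differentials. Everything else is formal.
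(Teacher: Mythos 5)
Your proposal follows essentially the same route as the paper: identify $(\Omega(C)_{\natural})_n$ with $\mathrm{Coker}(1-T)$ on $C^{\otimes n}$ (the dual of Quillen's Lemma~1.2, which you reprove directly rather than cite), and then pass to $\mathrm{Ker}(1-T)=\CC_n(C)$ via the characteristic-zero $\mathrm{Ker}/\mathrm{Coker}$ identification. The only difference is that you spell out the compatibility of differentials through the identity $Nb'=bN$, which the paper leaves implicit in the commutativity of the Tsygan bicomplex; your intermediate remark about $b'$ ``upgrading'' to $b$ on the quotient is slightly loosely phrased, but the appeal to $Nb'=bN$ is the correct and sufficient statement.
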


All explicit examples of derived NC Poisson structures in this paper arise by applying Theorem~\ref{t1s3} and Lemma~\ref{l1s3} to a cofibrant resolution of an honest algebra of $A$ that is of the form $\Omega(C)$ for some finite dimensional cyclic DG coalgebra $C$. This makes the following corollary of this paper relevant.

\begin{corollary} \la{c2s3}
Let $C$ be a (DG) coalgebra such that $\Omega(C) \stackrel{\sim}{\twoheadrightarrow} A$ in $\DGA_k$ for some $A \in \Alg_k $. Then,
$$\mathrm{HC}_{\bullet}(A)\,\cong\,\mathrm{HC}_{\bullet}(C) \,\text{.}$$

\end{corollary}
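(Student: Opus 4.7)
The plan is to combine the Feigin--Tsygan theorem (Theorem~\ref{ftt}) with the algebra-coalgebra duality of Lemma~\ref{l2s3}. First, observe that the cobar construction $\Omega(C)$ is almost free as a graded $k$-algebra, so it is cofibrant in $\DGA_k$; hence the given surjective quasi-isomorphism $\Omega(C) \sonto A$ is a cofibrant resolution of $A$. Applying Theorem~\ref{ftt} yields
$$
\overline{\HC}_\bullet(A) \,\cong\, \H_\bullet[\FT(\Omega(C))] \,=\, \H_\bullet[\Omega(C)/(k\cdot 1 + [\Omega(C),\Omega(C)])].
$$

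Next, by Lemma~\ref{l2s3}, there is a natural isomorphism $\Omega(C)_\natural \cong \CC_\bullet(C)$ of chain complexes. This identification sends the subspace generated by the unit $1 \in \Omega(C)$ (concentrated in tensor-degree $0$) to the subcomplex $\CC_\bullet(k) \subset \CC_\bullet(C)$ picked out by the (co)augmentation. Passing to the further quotient therefore gives an isomorphism
$$
\FT(\Omega(C)) \,\cong\, \CC_\bullet(C)/\CC_\bullet(k) \,=\, \overline{\CC}_\bullet(C),
$$
whose homology is precisely the reduced cyclic homology $\overline{\HC}_\bullet(C)$. Combining the two identifications gives $\overline{\HC}_\bullet(A) \cong \overline{\HC}_\bullet(C)$.

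Finally, the full cyclic homology is recovered from the reduced one via the splitting $\HC_\bullet = \overline{\HC}_\bullet \oplus \HC_\bullet(k)$ that holds on both sides (compatible with the canonical maps \eqref{crc} and its coalgebra analogue, using the unit of $A$ and the counit/coaugmentation of $C$). This gives the desired isomorphism $\HC_\bullet(A) \cong \HC_\bullet(C)$.

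The step requiring most care is the identification of the image of $k\cdot 1_{\Omega(C)}$ under the isomorphism of Lemma~\ref{l2s3}: one must check that the isomorphism $\Omega(C)_\natural \xrightarrow{\sim} \CC_\bullet(C)$ obtained from the Quillen-style description of $\Omega_\natural$ as a cokernel of $1-T$ (and the dual $\Ker(1-T)$ presentation of $\CC_\bullet(C)$) respects the unital/counital structure. This is a direct unwinding of the definitions, but it is where the (co)augmentation conventions enter and where the reduced-to-full passage must be justified.
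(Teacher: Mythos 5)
Your route is the one the paper intends: Corollary~\ref{c2s3} is stated immediately after Lemma~\ref{l2s3} precisely so that it can be deduced by combining that lemma with the Feigin--Tsygan theorem (Theorem~\ref{ftt}) applied to the cofibrant resolution $\Omega(C)\sonto A$, and your first two displayed identifications are exactly this argument. So the approach is correct and is the paper's own.

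There is, however, one concretely wrong identification in the step you yourself flag as delicate. Under the isomorphism of Lemma~\ref{l2s3}, the class of $1_{\Omega(C)}$ does \emph{not} go to a subcomplex $\CC_\bullet(k)\subset\CC_\bullet(C)$: the unit is the weight-zero component $C^{\otimes 0}=k$ of the cobar construction, whereas $\CC_\bullet(C)=\bigoplus_{n\ge 1}\Ker\bigl(1-T|_{C^{\otimes n}}\bigr)$ has no weight-zero part at all. What actually happens is that $\Omega(C)_{\natural}\cong k\cdot 1\oplus\bigoplus_{n\ge 1}\Coker\bigl(1-T|_{C^{\otimes n}}\bigr)$, so passing to $\FT(\Omega(C))=\Omega(C)_{\natural}/(k\cdot 1)$ kills exactly the weight-zero discrepancy and yields $\FT(\Omega(C))\cong\CC_\bullet(C)$ on the nose, not the quotient $\CC_\bullet(C)/\CC_\bullet(k)$. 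Consequently what the argument proves is $\rHC_\bullet(A)\cong\HC_\bullet(C)$, with the paper's $\HC_\bullet(C)$ (defined via $\Ker(1-T)$) playing the role of the \emph{reduced} cyclic homology of $A$. Your final splitting $\HC_\bullet=\rHC_\bullet\oplus\HC_\bullet(k)$ is therefore only legitimately invoked on the algebra side (where it does hold, since the augmentation $\Omega(C)\to k$ factors through $\H_0=A$); the coalgebra half of that splitting does not correspond to anything in the paper's definitions, and the statement of the corollary must be read with a convention that makes the two sides match --- a looseness already present in the paper's own formulations of Lemma~\ref{l2s3} and Corollary~\ref{c2s3}, so the defect is as much the paper's as yours, but your proposed resolution of it is not the right one.
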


 Now suppose that $A$ is a finite dimensional graded $k$-algebra. By definition, $\mathrm{HC}^i(A)\,\cong\,\mathrm{HC}_i(A)^{\ast}$. It is concentrated in homological degree $-i$.  By Theorem~\ref{t1s3}, Lemma~\ref{l1s3}, Lemma~\ref{l2s3} and the proof of Theorem~\ref{NCPoiss} (i),

\begin{corollary} \la{c1s3}
For any $n$-cyclic (finite dimensional) graded algebra $A$, $\mathrm{HC}^{\bullet}(A)[2-n]$ has the structure of a graded Lie algebra. Moreover, for any finite dimensional $k$-vector space $V$,
$$(\mathrm{Tr}_V)_{\bullet}\,:\,\bSym(\mathrm{HC}^{\bullet}(A)) \rar \mathrm{H}_{\bullet}(\Omega(A^{\ast}),V)^{\GL} $$
is a morphism of graded $(2-n)$-Poisson algebras.
\end{corollary}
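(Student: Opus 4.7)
The plan is to specialize the machinery of Sections~\ref{G} and~\ref{s3} (up to this point) to the DG algebra $\Omega(A^{\ast})$. Since $A$ is finite-dimensional, its linear dual $C := A^{\ast}$ is a $(-n)$-cyclic coassociative coalgebra by the duality explained just before Theorem~\ref{t1s3}. Applying Theorem~\ref{t1s3} to $C$ produces a double $(2-n)$-Poisson structure on the cobar construction $\Omega(C) = \Omega(A^{\ast})$, and Lemma~\ref{l1s3} converts this double bracket into a NC $(2-n)$-Poisson structure on the (free, hence cofibrant) DG algebra $\Omega(A^{\ast})$.

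To obtain the graded Lie algebra structure on $\HC^{\bullet}(A)[2-n]$, I would pass the NC Poisson bracket to homology. By the definition in Section~\ref{G.1.1} (adjusted to the $(2-n)$-graded case), the NC $(2-n)$-Poisson structure restricts to a DG Lie bracket of degree $2-n$ on the complex $\Omega(A^{\ast})_{\natural}$, which descends to a graded Lie bracket of degree $2-n$ on $\H_{\bullet}(\Omega(A^{\ast})_{\natural})$. By Lemma~\ref{l2s3} this homology equals $\HC_{\bullet}(A^{\ast})$, and, because $A$ is finite-dimensional, Remark~\ref{relations_algebracoalgebra} identifies $\HC_{\bullet}(A^{\ast})$ with $\HC^{\bullet}(A)$. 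After the cohomological shift $[2-n]$, the bracket has degree $0$, giving the required graded Lie algebra.

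For the second assertion, I would invoke Theorem~\ref{NCPoiss}(i): the NC $(2-n)$-Poisson structure on $\Omega(A^{\ast})$ induces a DG $(2-n)$-Poisson structure on $\Omega(A^{\ast})_V^{\GL}$, and its proof (through Proposition~\ref{pG.1.3.1} and Lemma~\ref{pG.1.4.1}) shows that the trace morphism
$$
(\Tr_V)_{\bullet}:\ \bSym(\Omega(A^{\ast})_{\natural}) \longrightarrow \Omega(A^{\ast})_V^{\GL}
$$
is a morphism of DG $(2-n)$-Poisson algebras. Since $k$ has characteristic zero, $\bSym$ is exact and commutes with taking homology; combining this with the identification $\H_{\bullet}(\Omega(A^{\ast})_{\natural}) \cong \HC^{\bullet}(A)$ established in the previous paragraph gives the desired morphism
$$
(\Tr_V)_{\bullet}:\ \bSym(\HC^{\bullet}(A)) \longrightarrow \H_{\bullet}(\Omega(A^{\ast}), V)^{\GL}
$$
of graded $(2-n)$-Poisson algebras. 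Note that $\Omega(A^{\ast})$ is already cofibrant in $\DGA_{k}$, so $\H_{\bullet}(\Omega(A^{\ast}), V)^{\GL}$ coincides with the derived representation homology appearing in Proposition~\ref{p2s1}, making the right-hand side unambiguous.

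The main technical hurdle should be the bookkeeping of degrees and signs in the $A \leftrightarrow C = A^{\ast}$ duality, and in particular checking that the $(-n)$-cyclic coalgebra $A^{\ast}$ fits cleanly into Theorem~\ref{t1s3} (which was stated for $n$-cyclic coalgebras with arbitrary integer $n$, but whose proof used the symbol $\pm$ throughout). Once those conventions are pinned down and Lemma~\ref{l2s3} is applied with the correct sign conventions, the result is a direct assembly of Theorems~\ref{t1s3} and~\ref{NCPoiss} together with Lemmas~\ref{l1s3} and~\ref{l2s3}; no new calculation beyond those sources should be required.
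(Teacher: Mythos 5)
Your proposal is correct and follows essentially the same route as the paper, whose own justification is precisely the chain you assemble: dualize to get the $(-n)$-cyclic coalgebra $A^{\ast}$, apply Theorem~\ref{t1s3} and Lemma~\ref{l1s3} to get the NC $(2-n)$-Poisson structure on the cofibrant DG algebra $\Omega(A^{\ast})$, identify $\H_\bullet(\Omega(A^{\ast})_{\natural})$ with $\HC^{\bullet}(A)$ via Lemma~\ref{l2s3} and Remark~\ref{relations_algebracoalgebra}, and invoke the proof of Theorem~\ref{NCPoiss}(i) for the trace map. Your added remarks on cofibrancy of $\Omega(A^{\ast})$ and on the exactness of $\bSym$ in characteristic zero are consistent with the paper's (terser) treatment.
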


For example, when $n=2$ and $A$ is $2$-cyclic, then $\Omega(A^{\ast})$ is a double Poisson algebra by Theorem~\ref{t1s3}. Lemma~\ref{l1s3} implies that $\Omega(A^{\ast})$ acquires a noncommutative Poisson structure from its double Poisson structure. Lemma~\ref{l2s3} implies that $\mathrm{HC}^{\bullet}(A)$ has a graded Lie algebra structure and Theorem~\ref{NCPoiss} (i) implies that
$$(\mathrm{Tr}_V)_{\bullet}\,:\,\bSym(\mathrm{HC}^{\bullet}(A)) \rar \mathrm{H}_{\bullet}(\Omega(A^{\ast}),V)^{\GL} $$
is a morphism of graded Poisson algebras. There is no shortage of cyclic $2$-algebras: the cohomology $\mathrm{H}^{\bullet}(M,\c)$ of any compact smooth $2$-manifold $M$ is such an algebra.

\subsection{Derived Poisson structures on $k[x,y]$}

The construction in the previous subsection is interesting when $C:= k.a \oplus k.b \oplus k.s$ with $|a|=|b|=1$ and $|s|=2$ with $\Delta(a)=\Delta(b)=0$ and $\Delta(s)=a \otimes b -b \otimes a$. In this case, there is a natural isomorphism
$$
\Omega(C) \stackrel{\cong}{\rightarrow} R\ ,\quad s \mapsto t,\ a \mapsto x ,\  b \mapsto y \ ,
$$
where $ R:=k\langle x,y,t\rangle\,,|x|=|y|=0,\,|t|=1$ with differential given by $dt:=[x,y]$. Note that $R$ is a almost free resolution of $A:=k[x,y]$ in
$\DGA_k$.

One can check that there is exactly one cyclic structure of degree $-2$ on $C$ (up to multiplication by scalars)
with $\omega(a,a)=\omega(b,b)=\omega(\mbox{--},s)=0$ and $\omega(a,b)=1$. Similarly, there is exactly one cyclic structure of degree $-3$ on $C$ (up to multiplication by scalars) with $\tilde{\omega}(a,s)=\tilde{\omega}(b,s)=1$. By Theorem~\ref{t1s3},

\begin{lemma}
 The cyclic structure $\omega$ (resp., $\tilde{\omega}$) on $C$ induces a NC Poisson (resp., NC $(-1)$-Poisson) structure on $R$.
\end{lemma}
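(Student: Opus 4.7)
The plan is to verify that $\omega$ and $\tilde\omega$ are bona fide cyclic structures on the DG coalgebra $C$ of degrees $-2$ and $-3$ respectively, and then invoke Theorem~\ref{t1s3} together with Lemma~\ref{l1s3}. For $\omega$ of degree $-2$ one has $2+(-2)=0$, so Theorem~\ref{t1s3} yields a double $0$-Poisson bracket on $\Omega(C)\cong R$, which by Lemma~\ref{l1s3} descends to a NC Poisson structure on $R$; for $\tilde\omega$ of degree $-3$ one obtains a double $(-1)$-Poisson bracket, and thus a NC $(-1)$-Poisson structure on $R$. So the real content of the proof is the cyclicity check for the two pairings.

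Because $C$ is concentrated in degrees $1$ and $2$, any differential on $C$ would have to lower degree by one and land in the zero part of $C$; hence $d_{C}=0$ and condition~\eqref{idif} is vacuous for both $\omega$ and $\tilde\omega$. Graded symmetry and non-degeneracy hold by construction once one extends the prescribed values bilinearly (noting that $|a|=|b|=1$ is odd, whereas $|s|=2$ is even, so the skew/symmetric split of $\omega$ and $\tilde\omega$ is consistent with their respective degrees). The remaining condition is the Sweedler identity~\eqref{coprod}. Since $\Delta(a)=\Delta(b)=0$, the identity reduces to the single non-trivial case $v=s$ (and by symmetry $w=s$): one must check
\[
\langle a,w\rangle\,b \;-\; \langle b,w\rangle\,a \;=\; \pm\,\langle s,w''\rangle\,w',
\]
for $w\in\{a,b,s\}$, with $\langle-,-\rangle$ equal to $\omega$ or $\tilde\omega$ in turn. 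In each instance the right hand side is either directly $0$ (when $w\in\{a,b\}$, so $\Delta(w)=0$) or given by $\pm\bigl(\langle s,b\rangle\,a-\langle s,a\rangle\,b\bigr)$ (when $w=s$), and the prescribed pairing values are chosen precisely so that both sides agree. The six resulting checks (three values of $w$ for each of $\omega,\tilde\omega$) are all immediate.

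Once cyclicity is in hand, the isomorphism $\Omega(C)\cong R$ transports the double brackets produced by Theorem~\ref{t1s3} to $R$, and Lemma~\ref{l1s3} gives the asserted NC Poisson and NC $(-1)$-Poisson structures. The only mildly technical point is the Koszul sign bookkeeping in~\eqref{coprod} and in the symmetry convention; but because $C$ has just three generators with only two parities represented, these signs are determined by inspection. No further structural work is needed beyond what is already packaged in Theorem~\ref{t1s3} and Lemma~\ref{l1s3}.
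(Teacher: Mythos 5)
Your overall route is the same as the paper's: there the lemma is presented as an immediate consequence of Theorem~\ref{t1s3} and Lemma~\ref{l1s3}, with exactly your degree count $(-2)+2=0$ and $(-3)+2=-1$, and the cyclicity of $\omega$ and $\tilde\omega$ is relegated to an unproved ``one can check''. So the only substantive content of your proposal is that check, and that is where the gap lies. The identity \eqref{coprod}, $\langle v',w\rangle\, v''=\pm\langle v,w''\rangle\, w'$, is not symmetric in $v$ and $w$, so the case $v\in\{a,b\}$, $w=s$ is \emph{not} disposed of by $\Delta(a)=\Delta(b)=0$: that kills only the left-hand side, while the right-hand side is $\pm\bigl(\langle v,b\rangle a-\langle v,a\rangle b\bigr)$. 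For $\omega$ with $v=a$, $w=s$ this gives $0$ on the left against $\pm\,\omega(a,b)\,a=\pm a\neq 0$ on the right; likewise your own displayed check with $v=s$, $w=a$ reads $\langle a,a\rangle b-\langle b,a\rangle a=\pm a$ on the left against $0$ on the right. So for $\omega$, taken literally on the three-dimensional non-counital coalgebra $C$, \eqref{coprod} fails in precisely the cases you declare ``immediate''; the pairing values are not ``chosen precisely so that both sides agree''. (For $\tilde\omega$ the degree constraints do kill all offending terms and the checks go through.) Two smaller inaccuracies: a degree $-1$ differential could send $s$ into $ka\oplus kb$, so $d_C=0$ is not forced by degree reasons (it holds because the paper defines $C$ with no differential); and neither pairing is non-degenerate on $C$ ($s$ lies in the radical of $\omega$, and $a-b$ in that of $\tilde\omega$) --- harmlessly, since the paper's definition of a cyclic DG coalgebra does not require non-degeneracy.

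The failure is repairable, but the repair is an idea your proposal does not contain. One fix is to pass to the counital completion $k\cdot 1\oplus C$ (the homology coalgebra of a surface), extending $\omega$ by $\langle 1,s\rangle=\langle s,1\rangle=1$; the extra terms $\langle 1,s\rangle a$ coming from $\Delta a=1\otimes a+a\otimes 1$ are exactly what restores \eqref{coprod}, and the cobar construction and formula \eqref{def_bb} are unaffected since they only see the reduced part. Alternatively, one verifies directly that the actual obstruction appearing in Step~3 of the proof of Theorem~\ref{t1s3}, namely the four-term sum $\langle u',v\rangle u''\pm\langle u,v''\rangle v'\pm\langle v,u''\rangle u'\pm\langle v',u\rangle v''$, vanishes for every pair of generators of $C$: the terms that break the two-term identity cancel across the four-term sum (e.g.\ for $u=a$, $v=s$ the two nonzero contributions are $\pm a$ and $\mp a$). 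Either argument has to replace your ``six immediate checks'' before Theorem~\ref{t1s3} and Lemma~\ref{l1s3} can be invoked for $\omega$.
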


Since $R$ is an almost free resolution of
$\, A := k[x,y]\,$ (see~\cite{BKR}, Example 4.1), taking homology yields a graded Lie bracket
of degree $0$ on $\,\rHC_{\bullet}(A)\,$ induced by the cyclic structure $\omega$ on $C$:
\begin{equation}
\la{derpois}
\{\,\mbox{--}\,,\,\mbox{--}\,\}_{\natural}:\ \rHC_{\bullet}(A) \times \rHC_{\bullet}(A) \to \rHC_{\bullet}(A)\ ,
\end{equation}
which is thus an example of a derived Poisson structure on $A$.

This structure has a natural geometric interpretation. If we restrict \eqref{derpois} to
$ \rHC_0(A) = \bar{A} $, we get the usual Poisson bracket on polynomials associated to the
symplectic form $ dx \wedge dy $. The Lie algebra
$ (\bar{A}, \{\,\mbox{--}\,,\,\mbox{--}\,\}_{\natural}) $ is thus isomorphic to the Lie algebra of (polynomial)
symplectic vector fields on $ k^2 $. Now, if we identify $\, \rHC_1(A) = \Omega^1(A)/dA \,$ as in~\cite{BKR}, Example 4.1, then, for any $ \bar{f} \in \bar{A} $ and $ \bar{\alpha} \in \Omega^1(A)/dA $,
$$
\{\bar{f},\,\bar{\alpha}\}_{\natural}= {\mathscr L}_{\theta_f}(\alpha)\ ,
$$
where $ {\mathscr L}_{\theta_f} $ is the Lie derivative on $1$-forms taken along the
Hamiltonian vector field $ \theta_f $.
For example, if $f=x^p$ and $\alpha = y^qdx$, then $\bar{\alpha}$ corresponds to the class of the $1$-cycle $qy^{q-1}t$ in $R_{\n}$ (see~\cite{BKR}, Example 4.1). Note that
$$\{x^p, y^{q-1}t\}_{\n}\,=\, \sum_{i=1}^{q-1}px^{p-1}y^{q-1-i}ty^{i-1} \,\text{.}$$
Again, by~\cite{BKR}, Example 4.1, the image of the R.H.S of the above equation is identified with the class of the $1$-form $x^{p-1}y^{q-1}dx$ in $\rHC_1(A)$. Thus,
$$\{ x^p, y^qdx\}_{\n} \,=\, pqx^{p-1}y^{q-1}dx \,\text{.}$$
On the other hand, $\theta_f=px^{p-1}\frac{\partial}{\partial y}$, and hence,
$$
{\mathscr L}_{\theta_f}(\alpha)\,=\, pqx^{p-1}y^{q-1}dx \,\text{.}
$$
In addition, the restriction
of \eqref{derpois} to $ \rHC_1(A) $ is zero (for degree reasons). Thus, the graded Lie algebra
$\,\rHC_{\bullet}(A)\,$ is isomorphic to the semidirect
product $\,\bar{A} \ltimes (\Omega^1 A/dA) \,$, where $ \bar{A} $ is equipped with the
standard Poisson bracket and $\, \Omega^1(A)/dA \,$ is a Lie module over $ \bar{A} $ with
action induced by the Lie derivative on $ \Omega^1(A) $.
The Lie bracket \eqref{derpois} extends to the graded symmetric algebra $ \bSym[\rHC(A)] $ making it a Poisson algebra. Theorem~\ref{NCPoiss} implies that $\H_{\bullet}(A, V)^{\GL}$ has a (unique) graded Poisson structure such that
the trace map $(\Tr_V)_{\bullet}:\, \bSym[\rHC(A)] \to  \H_{\bullet}(A, V)^{\GL} $ is a morphism of Poisson algebras.

Similarly, taking homology yields a (graded) Lie bracket $\{\mbox{--},\mbox{--}\}_{\natural,\tilde{\omega}}$ on $\rHC_{\bullet}(A)[-1]$ induced by the cyclic structure $\tilde{\omega}$ on $C$.
 Of course, for degree reasons the restriction of $\{\mbox{--},\mbox{--}\}_{\natural,\tilde{\omega}}$ to $\rHC_0(A)$ is trivial. However, for $f \in \overline{A}$ and $\alpha \in \rHC_1(A) \cong \Omega^1(A)/dA$,
the geometric interpretation of $\{f,\alpha\}_{\n,\tilde{\omega}}$ remains mysterious.

We expect that the DG resolutions of algebras that are $n$-Calabi-Yau in the sense of Ginzburg
\cite{G1} (see also \cite{Ke}) have analogous noncommutative $(2-n)$-Poisson structures. In particular,
Ginzburg $2$-Calabi-Yau algebras are expected to have derived NC Poisson structures.

\subsection{Remarks on string topology}
Let $M$ be a smooth compact oriented manifold. Denote by $LM$ the free loop space of $M$.
In \cite{ChasSullivan}, M.~Chas and D.~Sullivan have shown that the $S^1$-equivariant homology
$ H_*^{S^1}(LM) $ of $ LM $ has a natural Lie algebra structure. Their construction uses (in an essential way)
the transversal intersection product of two chains in a manifold. Since the intersection product is only defined for transversal chains, it is difficult to realize the Lie algebra $ H_*^{S^1}(LM) $ algebraically. This is the subject
of string topology, which has become a very active area of research in recent years.

By a well-known theorem of K.-T. Chen \cite{Chen} and J. D. S. Jones \cite{J}, if $M$ is simply connected,
there is a quasi-isomorphism of complexes
$$
\mathrm{CC}_*(A(M))\stackrel{\simeq}\longrightarrow C^*_{S^1}(LM)\ ,
$$
where $ A(M)$ is any DG algebra model (de Rham, singular, PL forms etc.) for the
cochain complex of $M$. Similarly, using the methods of \cite{Chen} and \cite{J}, one can
construct a quasi-isomorphism
$$
C_*^{S^1}(LM)\stackrel{\simeq}\longrightarrow \mathrm{CC}_*(C(M))\ ,
$$
where $C(M)$ is any DG coalgebra model for the chain complex of $M$.
On the other hand, Lambrechts and Stanley \cite{LambStan} have recently shown that
for $M$ simply connected, there is a finite-dimensional DG coalgebra $ C(M)$
with a cyclically invariant nondegenerate pairing, that is quasi-isomorphic to the singular
chain complex of $M$. Further, the nondegenerate pairing on $ C(M)$ gives the
intersection product pairing at the homology level.
Combining these results with our Corollary~\ref{c1s3}, we thus obtain a Lie algebra structure
on the cyclic homology of $C(M)$ that realizes the Lie algebra of Chas and Sullivan
({\it cf.} \cite{CEG}).

Besides Sullivan and his school, Blumberg, Cohen and Teleman
are carrying out a project that aims to systematically lift the interesting structures on $LM$ to the
path space $PM$ of $M$ (see  \cite{BCT}). More precisely, they associate to $M$ a category
where the objects are the points of $M$ and the space of morphisms between two
objects is a (for example, singular) chain complex of the space of paths connecting them.
Our present paper has essentially the same starting point as \cite{BCT}: we have shown that the Lie algebra of
string topology on $H_*^{S^1}(LM)$ arises from the NC Poisson structure of the path space on
$M$. More precisely, the cyclic homology of $\Omega(C)$, which is exactly the cyclic homology
of the above described category, is isomorphic to $\mathrm{HC}_*(C)$ (see Corollary~\ref{c2s3}), and hence is isomorphic to $ H_*^{S^1}(LM) $. This clarifies the relation between the above mentioned theorem of Jones and a well-known theorem of Goodwillie (see \cite{Good}).

\end{document}